\documentclass[a4paper, 11pt, twoside]{article}

\usepackage[top=3cm, bottom=3cm, left=3cm, right=3cm]{geometry}
\usepackage{amsmath}
\usepackage{amsthm}
\usepackage{amssymb}
\usepackage{mathtools}
\usepackage[integrals]{wasysym}
\usepackage{stmaryrd}
\usepackage[mathcal]{euscript}
\usepackage[utf8]{inputenc}
\usepackage[T1]{fontenc}
\usepackage[english]{babel}
\usepackage[noadjust]{cite}
\usepackage{lmodern}
\usepackage[inline]{enumitem}
\usepackage[hidelinks, bookmarks, bookmarksnumbered, pdfstartview={XYZ null null 1.00}]{hyperref}

\theoremstyle{plain}
\newtheorem{theorem}{Theorem}[section]
\newtheorem{lemma}[theorem]{Lemma}
\newtheorem{corollary}[theorem]{Corollary}
\newtheorem{proposition}[theorem]{Proposition}
\theoremstyle{definition}
\newtheorem{definition}[theorem]{Definition}
\newtheorem{remark}[theorem]{Remark}

\DeclareMathOperator{\Lip}{Lip}
\DeclareMathOperator{\diverg}{div}
\DeclareMathOperator{\supp}{supp}
\DeclareMathOperator{\dist}{{\mathbf{d}}}
\DeclareMathOperator{\MFG}{MFG}
\DeclareMathOperator{\OCP}{OCP}
\DeclareMathOperator{\Adm}{Adm}
\DeclareMathOperator{\Opt}{Opt}
\DeclareMathOperator{\OOpt}{{\mathbf{Opt}}}
\DeclareMathOperator{\Eq}{Eq}
\DeclareMathOperator{\EEq}{{\mathbf{Eq}}}
\DeclareMathOperator{\Lim}{Lim}

\DeclarePairedDelimiter{\abs}{\lvert}{\rvert}

\newcommand{\suchthat}{\ifnum\currentgrouptype=16 \mathrel{}\middle|\mathrel{}\else\mid\fi}
\newcommand{\diff}{\,\mathrm{d}}

\numberwithin{equation}{section}

\begin{document}

\setlength{\parskip}{1pt plus 1pt minus 1pt}

\setlist[enumerate, 1]{label={\textnormal{(\alph*)}}, ref={(\alph*)}, leftmargin=0pt, itemindent=*}
\setlist[itemize, 1]{label={\textbullet}, leftmargin=0pt, itemindent=*}

\newlist{hypothesis}{enumerate}{1}
\setlist[hypothesis]{label={\textup{(H\arabic*)}}, ref={(H\arabic*)}, leftmargin=*, widest*=10}

\def\paperTitle{{A note on existence and asymptotic behavior of Lagrangian equilibria for first-order optimal-exit mean field games}}
\newcommand{\paperAuthor}{Guilherme Mazanti\thanks{Université Paris-Saclay, CNRS, CentraleSupélec, Inria, Laboratoire des signaux et systèmes, 91190, Gif-sur-Yvette, France.}}
\newcommand{\paperKeywords}{Mean field games, Lagrangian equilibria, congestion games, existence of equilibria, asymptotic behavior}
\newcommand{\paperMSC}{49N80, 91A16, 93C15}

\expandafter\title\paperTitle
\author{\paperAuthor}

\date{}

\maketitle

\begin{abstract}
In this paper, we consider a first-order mean field game model motivated by crowd motion in which agents evolve in a (not necessarily compact) metric space and wish to reach a given target set. Each agent aims to minimize the sum of their travel time and an exit cost which depends on their exit position on the target set. Agents interact through their dynamics, the maximal speed of an agent being assumed to be a function of their position and the distribution of other agents. This interaction may model, in particular, congestion phenomena. Under suitable assumptions on the model, we prove existence of Lagrangian equilibria, analyze the asymptotic behavior for large time of the distribution of agents, and study the dependence of equilibria and asymptotic limits on the initial distribution of the agents.

\medskip

\noindent\textbf{2020 Mathematics Subject Classification.} \paperMSC{}.

\medskip

\noindent\textbf{Keywords.} \paperKeywords{}.
\end{abstract}

\hypersetup{pdftitle=\paperTitle, pdfauthor={\paperAuthor}, pdfkeywords={\paperKeywords}, pdfsubject={\paperMSC}}

\section{Introduction}

Since their introduction around 2006 by the simultaneous works of Peter E.~Caines, Minyi Huang, and Roland P.~Malhamé \cite{Huang2003Individual, Huang2007Large, Huang2006Large} and of Jean-Michel Lasry and Pierre-Louis Lions \cite{Lasry2007Mean, Lasry2006JeuxI, Lasry2006JeuxII}, following previous works in the economics literature on games with infinitely many agents \cite{Aumann1964Markets, Jovanovic1988Anonymous}, mean field games (MFGs) have attracted the interest of a large number of researchers, due both to the interesting and challenging theoretical questions raised by their analysis and their wide spectrum of applications, ranging from engineering and economics to the modeling of crowd motion and epidemics \cite{Lachapelle2011Mean, Aurell2022Optimal, Huang2003Individual, Lasry2007Mean}. Works on this topic have considered questions such as existence and uniqueness of their equilibria, approximation of games with many players by MFGs, numerical methods for approximating equilibria of MFGs, or the characterization of equilibria through the master equation, among others \cite{Lauriere2022Convergence, Carlini2014Fully, Achdou2010Mean, Porretta2018Turnpike, Gomes2021Numerical, Cardaliaguet2019Master}. We refer to \cite{Carmona2018ProbabilisticI, Carmona2018ProbabilisticII, Achdou2020MeanI} for more details on recent topics on mean field games.

While the majority of the works on mean field games consider that the agents of the game evolve in a given time interval $[0, T]$, many applications involve agents that may leave the game before its terminal time $T$, and also games in which a terminal time $T$ is not prescribed and agents evolve in $[0, +\infty)$, leaving the game at some point. This is the case, for instance, of \cite{Carmona2017Mean, Burzoni2023Mean}, which consider mean field games with applications to bank run, i.e., situations in which clients of a bank, believing that the bank is about to fail, withdraw all their money, and try to choose the time to withdraw the money in an optimal way. These models belong to a more general class of mean field game problems known as mean field games of optimal stopping, in which the main choice of an agent is when to stop the game \cite{Bertucci2018Optimal, Gomes2015Obstacle, Nutz2018Mean, Bouveret2020Mean}. Other works, such as \cite{Graber2020Mean}, consider economic models for the production of exhaustible resources, in which firms, who wish to maximize their profit, produce goods based on exhaustible resources, and they leave the game when they deplete their capacities. These games can be seen as mean field games with an absorbing boundary, i.e., in which agents who reach a certain part of the boundary of the domain immediately leave the game \cite{Campi2018NPlayer}. An important kind of optimal control problem with an absorbing boundary is that of conditional exit control problems, studied in details in \cite{CarmonaNonstandard}, in which the running cost of an agent at time $t$ is conditioned to the fact that the agent is still in the domain at time $t$. In the sequel, we refer to MFGs in which agents choose the time at which they leave the game as \emph{optimal-exit} MFGs.

This paper considers a first-order optimal-exit MFG in which a continuum of rational agents evolve in a given metric space $X$, the aim of each agent being to reach a given target set $\Gamma \subset X$ while minimizing the sum of the time they take to reach $\Gamma$ and a cost depending on the arrival position at $\Gamma$. We also assume that, at each time, the velocity of an agent is bounded by a function depending on their position and the current distribution of other agents, that is,
\begin{equation}
\label{eq:intro-velocity-bound}
\abs{\dot\gamma}(t) \leq K(m_t, \gamma(t)),
\end{equation}
where $\gamma$ is the trajectory of the agent, $\abs{\dot\gamma}$ is its metric derivative, $m_t$ is a probability measure on $X$ describing the distribution of agents at time $t$, and $K$ is a function taking positive values. This MFG model is inspired by the study of crowd motion: agents of the game may represent pedestrians moving in the spatial domain $X$ who wish to reach an exit of $X$, the set of possible exits of $X$ being $\Gamma$. In this context, the bound on the velocity of an agent from \eqref{eq:intro-velocity-bound} can be interpreted as a model for congestion, which, roughly speaking, should model the fact that an agent is physically blocked by other agents in regions of large density, and thus they cannot move faster than a certain maximal speed, which depends on their position and the density of other agents around their position.

Motivated by understanding and, if possible, controlling and optimizing the flow of large groups of people, several works have addressed the mathematical modeling of crowd motion \cite{Helbing1995Social, Henderson1971Statistics, Maury2019Crowds}. Among the diversity of crowd motion models available in the literature, those of interest when dealing with MFGs are the \emph{macroscopic} models, in which the crowd at a given time $t$ is represented by a probability measure $m_t$ on the space of possible positions $X$, which evolves according to some conservation law, typically a continuity equation of the form $\partial_t m + \diverg(m V) = 0$, where $V$ is the velocity field followed by the agents. While most macroscopic crowd motion models consider a given velocity field $V$ constructed from modeling assumptions, the mean field game approach consists instead in considering that each agent will choose their trajectory by solving some optimal control problem, the velocity field $V$ being a consequence of the optimal choices of the agents. In other words, MFG models for crowd motion usually try to capture strategic choices of the crowd based on the rational anticipation by an agent of the behavior of others.

Up to the author's knowledge, the first work to be fully dedicated to a mean field game model for crowd motion is \cite{Lachapelle2011Mean}, which proposes an MFG model for a two-population crowd with trajectories perturbed by additive Brownian motion and considers both their stationary distributions and their evolution on a prescribed time interval. Since then, many other works have studied MFG models for (or related to) crowd motion with a diversity of modeling perspectives and assumptions, such as \cite{Burger2013Mean, Cardaliaguet2016First, Benamou2017Variational, Bagagiolo2022Optimal, Cristiani2023Generalized, Ducasse2022Second, Mazanti2019Minimal, Dweik2020Sharp, Sadeghi2022Multi, Sadeghi2022Nonsmooth, Sadeghi2021Characterization}. A key feature for an MFG model for crowd motion is that it should take into account the difficulty of pedestrians of passing through congested regions, and several works in the literature do that by a suitable penalization term in the cost function, yielding the so-called \emph{MFGs of congestion} \cite{Evangelista2018First, Achdou2018Mean, Gomes2015Existence}. These games typically consider costs which include a product of the form $m_t^\alpha \abs{\dot\gamma(t)}^\beta$ (for some exponents $\alpha, \beta > 0$), where $m_t$ is evaluated at or in a neighborhood of $\gamma(t)$, meaning that high velocities are costly, and that they are even more costly in the presence of high concentrations. This is not the point of view considered in this paper, which models congestion as a constraint on the velocity of agents through \eqref{eq:intro-velocity-bound}.

In order to properly model congestion, the function $K$ in \eqref{eq:intro-velocity-bound} should compute $K(\mu, x)$ for some distribution of agents $\mu$ and some position $x$ by evaluating $\mu$ at or around $x$ and giving as a result some nonincreasing function of this evaluation, meaning that the maximal speed of an agent is a nonincreasing function of some ``average density'' around $x$. A natural choice, for instance, would be $K(\mu, x) = \max(0, 1 - \mu(x))$ if $\mu$ is absolutely continuous with respect to the Lebesgue measure, with a continuous density $\mu$, and $+\infty$ otherwise, which is the one adopted in Hughes' model for crowd motion \cite{Hughes2002Continuum}. However, due to the lack of regularity of this choice of $K$ --- and of any other choice of $K$ depending on a local evaluation of $\mu$ at a given position $x$ ---, there are no general mathematical results on the existence of solutions for neither Hughes' nor MFG models, apart from some results for Hughes' model in dimension $1$ \cite{Amadori2014Existence, DiFrancesco2011Hughes, Andreianov2023Existence}. The interested reader can find an overview of mathematical results for the Hughes' model in the recent survey \cite{Amadori2023Mathematical} and further links between Hughes' model and MFGs in \cite{GhattassiNonseparable}. We also highlight that the choice of $K$ in Hughes' model is an indirect way to model a density constraint, since it imposes that agents can only move with a positive speed in the domain $\{x \in X \suchthat \mu(x) < 1\}$. We refer to \cite{Meszaros2015Variational, Cardaliaguet2016First, Cesaroni2021One, Daudin2023Optimal, Lavenant2019New} for mean field games and multi-agent optimal control problems with density constraints.

To avoid issues concerning the regularity of $K$, our model is motivated by the case where $K$ is nonlocal. A typical kind of $K$ one can keep in mind is
\begin{equation}
\label{IntroK}
K(\mu, x) = \kappa\left(\int_{X} \chi(x, y) \eta(y) \diff \mu(y)\right),
\end{equation}
where $\chi\colon X \times X \to [0, +\infty)$ is a kernel, $\eta\colon X \to [0, +\infty)$ may serve as a weight on $X$ or as a cut-off function to discount some part of $X$, and the nonincreasing function $\kappa\colon [0, +\infty) \to [0, +\infty)$ provides the maximal speed in terms of the average density computed by the integral. Even though the results presented in this paper do not assume a particular form for $K$, most of our assumptions are introduced having in mind that they should be verified for \eqref{IntroK} under suitable regularity assumptions on $\kappa$, $\chi$, and $\eta$.

The MFG model considered here is an extension of those treated in \cite{Mazanti2019Minimal, Dweik2020Sharp, Sadeghi2022Multi, Sadeghi2022Nonsmooth, Sadeghi2021Characterization} and is also related to \cite{Ducasse2022Second}, which considers a second-order version of the model. We refer to those references, and in particular to the introduction of \cite{Ducasse2022Second}, for further discussion on the model and more details on its relations to other mean field games considered in the literature. A detailed comparison between our model and our results with those from \cite{Mazanti2019Minimal, Dweik2020Sharp, Sadeghi2022Multi, Sadeghi2022Nonsmooth, Sadeghi2021Characterization} is provided in Section~\ref{sec:compare}.

In this work, as in \cite{Mazanti2019Minimal, Dweik2020Sharp, Sadeghi2022Multi, Sadeghi2022Nonsmooth, Sadeghi2021Characterization}, we adopt a Lagrangian framework for describing equilibria of the mean field game we consider: instead of representing the movement of the agents as a time-dependent probability measure $m_t$ on $X$, we make use instead of a probability measure $Q$ on the set of all possible trajectories of the agents in $X$. The Lagrangian formulation is a classical approach in optimal transport problems (see, e.g., \cite{Santambrogio2015Optimal, Ambrosio2005Gradient}), which has been used for instance in \cite{Brenier1989Least} to study incompressible flows, in \cite{Carlier2008Optimal} for Wardrop equilibria in traffic flow, or in \cite{Bernot2009Optimal} for branched transport problems. The use of the Lagrangian approach in mean field games dates back at least to \cite{Cardaliaguet2016First, Cardaliaguet2015Weak}, and since then it has been used in several works, such as \cite{Benamou2017Variational, Cannarsa2018Existence, Cannarsa2019C11, Cannarsa2021Mean, Fischer2021Asymptotic, Mazanti2019Minimal, Dweik2020Sharp, Sadeghi2022Multi, Sadeghi2022Nonsmooth, Sadeghi2021Characterization}. Some of these references call equilibria in a Lagrangian framework as \emph{Lagrangian equilibria} but, for simplicity, we will only use the term \emph{equilibria} in the sequel of this paper.

In addition to proving existence of equilibria, we will also consider in this paper the asymptotic behavior at an equilibrium of the probability measure $m_t$ describing the distribution of agents at time $t$ as $t \to +\infty$. The study of the asymptotic behavior of mean field games and optimal control problems is a classical problem that was addressed in several works in the literature. In the context of optimal control, a standard question, addressed for instance in \cite{BardiAsymptotic, Lasry1975Controle, Grune1998Relation, Davini2016Convergence, Ziliotto2019Convergence, Roquejoffre2001Convergence, Cannarsa2022Asymptotic, Barles2000Large}, is to understand the large-time behavior of the value function of an optimal control problem, and studying more precisely whether the ergodic limit of such a value function converges to the value function of a suitable stationary problem, described as the solution of a stationary Hamilton--Jacobi equation. As for mean field games, a frequent question, addressed for instance in \cite{Cardaliaguet2013Long1, Cardaliaguet2013Long2, Cardaliaguet2021Ergodic, Masoero2019Long, Cardaliaguet2019Long, Bardi2024Long}, is that of the average behavior of equilibria of games on a finite time interval $[0, T]$ as the time horizon $T$ tends to $+\infty$, most works being interested in whether such a limit can be characterized by a system of stationary PDEs, which can itself be interpreted as a stationary mean field game in infinite horizon. Some works also study the so-called \emph{turnpike property} \cite{Porretta2018Turnpike, Geshkovski2022Turnpike, Cirant2021Long, Trelat2018Integral, Sun2024Turnpike}, which consists on the fact that, for optimal control problems or mean field games in a large time interval $[0, T]$, the behavior of the solution in an interval of the form $[\varepsilon, T - \varepsilon]$ for some suitable $\varepsilon > 0$ can be approached by the solution of a stationary problem. The asymptotic analysis carried out in this paper is more closely related to the classical asymptotic analysis of dynamical systems \cite{Hartman1964Ordinary, Robinson1999Dynamical}, as it can be seen as the computation of the $\omega$-limit set of optimal trajectories, the relation with the asymptotic analysis of optimal control problems and mean field games coming from the fact that the dynamics of the distribution of agents $t \mapsto m_t$ is obtained through the solution of optimal control problems.

The paper is organized as follows. Section~\ref{sec:notation} sets the notation and provides definitions important for the sequel of the paper, such as those of Wasserstein distance and metric derivative. The mean field game model considered in this paper is described in details in Section~\ref{sec-first-model}, which also considers an associated optimal control problem. We also provide, in Section~\ref{sec-first-model}, the main assumptions used in the paper, and compare our setting and our results to those of \cite{Mazanti2019Minimal, Dweik2020Sharp, Sadeghi2022Multi, Sadeghi2022Nonsmooth, Sadeghi2021Characterization}. Preliminary properties of the mean field game and the optimal control problem considered in the paper are provided in Section~\ref{sec-prelim}. Finally, our main results are presented in Section~\ref{sec:main}: existence of equilibria of the game is shown in Section~\ref{sec:existence}, the asymptotic behavior of the distribution of agents $m_t$ as $t \to +\infty$ is the subject of Section~\ref{sec:asymptotic}, and the dependence of equilibria and asymptotic limits on the initial distribution of agents is studied in Section~\ref{sec:dependence-m0}.

\section{Notation and definitions}
\label{sec:notation}

In this paper, we denote by $\mathbb N$, $\mathbb N^\ast$, $\mathbb R_+$, and $\mathbb R_+^\ast$ the sets of nonnegative integers, positive integers, nonnegative real numbers, and positive real numbers, respectively. Given a subset $A$ of a topological space $X$, its boundary and closure are denoted by $\partial A$ and $\overline A$, respectively. The open and closed balls in a metric space $X$ centered at $x \in X$ and with radius $r \geq 0$ are denoted by $B_{X}(x, r)$, $\overline B_{X}(x, r)$, respectively. Denoting by $\dist$ the metric of $X$, given $x \in X$ and $A \subset X$, we write $\dist(x, A)$ for the distance from the point $x$ to the set $A$, defined by $\dist(x, A) = \inf_{y \in A} \dist(x, y)$. We use $\abs{x}$ to denote the Euclidean norm of a vector $x \in \mathbb R^d$.

Given two metric spaces $X$ and $Y$, we use $\mathcal C(X, Y)$ (respectively, $\Lip(X, Y)$) to denote the set of continuous (respectively, Lipschitz continuous) functions from $X$ to $Y$. Given $c \geq 0$, we define $\Lip_c(X, Y)$ as the subset of $\Lip(X, Y)$ made of those functions whose Lipschitz constant is upper bounded by $c$. For ease of notation, we write simply $\mathcal C_X$ (respectively, $\Lip(X)$, $\Lip_c(X)$) for $\mathcal C(\mathbb R_+, X)$ (respectively, $\Lip(\mathbb R_+, X)$, $\Lip_c(\mathbb R_+, X)$). We always assume that $\mathcal C_X$, $\Lip(X)$, and $\Lip_c(X)$ are endowed with the topology of uniform convergence on compact sets and we recall that, when $(X, \dist)$ is a complete and separable metric space, $\mathcal C_{X}$ is a Polish space (see, e.g., \cite[Corollary~3, page~X.9; Corollary, page~X.20; and Corollary, page~X.25]{Bourbaki2007Topologie}). Whenever needed, we endow $\mathcal C_{X}$ with the complete distance
\begin{equation*}
\dist_{\mathcal C_{X}}(\gamma_1, \gamma_2) = \sum_{n=1}^\infty \frac{1}{2^n} \sup_{t \in [0, n]}\dist(\gamma_1(t), \gamma_2(t)).
\end{equation*}
Recall that, if $X$ is compact, then, thanks to Arzel\`a--Ascoli theorem \cite[Corollary~3, page~X.19]{Bourbaki2007Topologie}, $\Lip_c(X)$ is compact. For $t \in \mathbb R_+$, we denote by $e_t\colon \mathcal C_{X} \to X$ the evaluation map given by $e_t(\gamma) = \gamma(t)$.

We will make use in this paper of set-valued maps, and we denote $f\colon A \rightrightarrows B$ to say that $f$ is a set-valued map defined on $A$ and taking as values subsets of $B$, i.e., $f(a) \subset B$ for every $a \in A$.

Given a Polish space $X$, the set of all Borel probability measures defined on $X$ is denoted by $\mathcal P(X)$. The support of a measure $\mu \in \mathcal P(X)$ is denoted by $\supp(\mu)$. Given two measurable spaces $X$ and $Y$, a measurable map $f\colon X \to Y$, and a measure $\mu$ in $X$, we use $f_{\#} \mu$ to denote the pushforward of $\mu$ through $f$, i.e., the measure on $Y$ defined by $f_{\#} \mu (A) = \mu(f^{-1}(A))$ for every measurable set $A \subset Y$.

We will always consider in the sequel that, for a Polish space $X$, the set $\mathcal P(X)$ is endowed with the topology of weak convergence of measures. Recall that, by the Portmanteau theorem (see, e.g., \cite[Chapter~1, Theorem~2.1]{Billingsley1999Convergence}), a sequence $(\mu_n)_{n \in \mathbb N}$ in $\mathcal P(X)$ converges weakly to some $\mu \in \mathcal P(X)$ if and only if one of the following equivalent conditions is satisfied:
\begin{itemize}
\item $\displaystyle\lim_{n \to +\infty} \int_X f(x) \diff \mu_n(x) = \int_X f(x) \diff \mu(x)$ for every continuous and bounded function $f\colon X \to \mathbb R$;
\item $\displaystyle\limsup_{n \to +\infty} \mu_n(F) \leq \mu(F)$ for every closed set $F \subset X$;
\item $\displaystyle\liminf_{n \to +\infty} \mu_n(G) \geq \mu(G)$ for every open set $G \subset X$.
\end{itemize}

For a Polish space $X$ endowed with a complete metric $\dist$, we define, for $p \in [1, +\infty)$,
\[
\mathcal{P}_p(X) = \left\{ \mu \in \mathcal{P}(X) \suchthat \int_{X} \dist(x, \overline x)^p \diff \mu(x) < +\infty \text{ for some }\overline x \in X\right\}.
\]
Clearly, if $\int_{X} \dist(x, \overline x)^p \diff \mu(x) < +\infty$ for some $\overline x \in X$, then the same is true also for every $\overline x \in X$. We endow $\mathcal{P}_p(X)$ with the usual Wasserstein distance $\mathbf{W}_p$, defined by
\begin{equation}
\label{eq-defi-Wasserstein}
\mathbf{W}_p(\mu, \nu) = \inf \left\{ \int_{X \times X} \dist(x, y)^p \diff\lambda(x,y) \suchthat \lambda \in \Pi(\mu, \nu) \right\}^{1/p},
\end{equation}
where $\Pi(\mu,\nu)= \left\{\lambda \in \mathcal{P}(X \times X) \suchthat {\pi_{1}}_{\#} \lambda=\mu,\, {\pi_{2}}_{\#} \lambda=\nu \right\}$ and $\pi_1,\, \pi_2\colon X \times X \to X$ denote the canonical projections onto the first and second factors of the product $X \times X$, respectively.

Let $X$ be a metric space with metric $\dist$, $a, b \in \mathbb R$ with $a < b$, and $\gamma\colon (a, b) \to X$. The \emph{metric derivative}\label{AbsDotGamma} of $\gamma$ at a point $t \in (a, b)$ is defined by
\[\abs{\dot\gamma}(t) = \lim_{s \to t} \frac{\dist(\gamma(s), \gamma(t))}{\abs{s - t}}\]
whenever this limit exists. Recall that, if $\gamma$ is absolutely continuous, then $\abs{\dot\gamma}(t)$ exists for almost every $t \in (a, b)$ (see, e.g., \cite[Theorem~1.1.2]{Ambrosio2005Gradient}).

\section{The model}
\label{sec-first-model}

\subsection{Description of the model}

Let $(X, \dist)$ be a complete and separable metric space, $\Gamma \subset X$ be nonempty and closed, $K\colon \mathcal P(X) \times X \to \mathbb R_+$, $g\colon \Gamma \to \mathbb R_+$, and $m_0 \in \mathcal P(X)$. We consider in this paper the following mean field game, denoted by $\MFG(X, \Gamma, K, g, m_0)$. Agents evolve in $X$, their distribution at time $t \in \mathbb R_+$ being given by a probability measure $m_t \in \mathcal P(X)$, with $m_0 \in \mathcal P(X)$ being given. The goal of each agent is to minimize the sum of the time at which they reach the exit $\Gamma$ with the exit cost $g$ computed at the position at which they first reach $\Gamma$, and we assume that the speed of an agent at a position $x$ in time $t$ is bounded by $K(m_t, x)$.

Before providing a more mathematically precise definition of this mean field game and its equilibria, we first introduce an associated optimal control problem where agents evolving in $X$ want to reach $\Gamma$ while minimizing the sum of their arrival time and a function of their arrival position, their speed being bounded by some time- and state-dependent function $k\colon \mathbb R_+ \times X \to \mathbb R_+$. Here $k$ will not depend on the density of the agents, and we consider instead that the dependence of $k$ with respect to time is known. This optimal control problem is denoted in the sequel by $\OCP(X, \Gamma, k, g)$.

\begin{definition}[$\OCP(X, \Gamma, k, g)$]
\label{DefiOCP}
Let $(X, \dist)$ be a complete and separable metric space, $\Gamma \subset X$ be nonempty and closed, and $k\colon \mathbb R_+ \times X \to \mathbb R_+$ and $g\colon \Gamma \to \mathbb R_+$ be continuous.
\begin{enumerate}
\item\label{DefiOCPAdm} A curve $\gamma \in \Lip(X)$ is said to be $k$-\emph{admissible} for $\OCP(X, \Gamma, k, g)$ if its metric derivative $\abs{\dot\gamma}$ satisfies $\abs{\dot\gamma}(t) \leq k(t, \gamma(t))$ for almost every $t \in \mathbb R_+$. The set of all $k$-admissible curves is denoted by $\Adm(k)$.

\item\label{DefiOCPTau} Let $t_0 \in \mathbb R_+$. The \emph{first exit time} after $t_0$ of a curve $\gamma \in \mathcal C_X$ is the number $\tau(t_0, \gamma) \in [t_0, +\infty]$ defined by
\begin{equation}
\label{eq-defi-tau}
\tau(t_0, \gamma) = \inf\{t \geq 0 \suchthat \gamma(t + t_0) \in \Gamma\},
\end{equation}
with the convention $\inf\emptyset = +\infty$.

\item The \emph{final cost function} is the function $G \colon \mathbb R_+ \times \mathcal C_X \to [0, +\infty]$ defined, for $(t_0, \gamma) \in \mathbb R_+ \times \mathcal C_X$, by
\[
G(t_0, \gamma) = \begin{dcases*}
g(\gamma(t_0 + \tau(t_0, \gamma))) & if $\tau(t_0, \gamma) < +\infty$, \\
+\infty & otherwise.
\end{dcases*}
\]

\item\label{DefiOCPOptimalTraj} Let $t_0 \in \mathbb R_+$ and $x_0 \in X$. A curve $\gamma \in \Lip(X)$ is said to be an \emph{optimal curve} or \emph{optimal trajectory} for $(k, g, t_0, x_0)$ if $\gamma \in \Adm(k)$, $\gamma(t) = x_0$ for every $t \in [0, t_0]$, $\tau(t_0, \gamma) < +\infty$, $\gamma(t) = \gamma(t_0 + \tau(t_0, \gamma)) \in \Gamma$ for every $t \in [t_0 + \tau(t_0, \gamma), \allowbreak +\infty)$, and
\begin{equation}
\label{EqMinimalTime}
\tau(t_0, \gamma) + G(t_0, \gamma) = \min_{\substack{\beta \in \Adm(k) \\ \beta(t_0) = x_0}} \tau(t_0, \beta) + G(t_0, \beta).
\end{equation}
The set of all optimal curves for $(k, g, t_0, x_0)$ is denoted by $\Opt(k, g, t_0, x_0)$.

\item\label{def-value-function} The \emph{value function} of the optimal control problem $\OCP(X, \Gamma, k, g)$ is the function $\varphi\colon \mathbb R_+ \times X \to \mathbb R_+ \cup \{+\infty\}$ defined for $(t_0, x_0) \in \mathbb R_+ \times X$ by
\begin{equation}
\label{value function}
\varphi(t_0, x_0) = \inf_{\substack{\gamma \in \Adm(k) \\ \gamma(t_0) = x_0}} \tau(t_0, \gamma) + G(t_0, \gamma).
\end{equation}
\end{enumerate}
\end{definition}

\begin{remark}
\label{remk-tau}
When $\tau(t_0, \gamma) < +\infty$, the infimum in \eqref{eq-defi-tau} is a minimum, since $\gamma$ is continuous and $\Gamma$ is closed. In addition, it follows immediately from \eqref{eq-defi-tau} that, for every $h \geq 0$, we have
\[
t_0 + \tau(t_0, \gamma) \leq t_0 + h + \tau(t_0 + h, \gamma),
\]
with equality if $\gamma(t) \notin \Gamma$ for every $t \in [t_0, t_0 + h)$.
\end{remark}

\begin{remark}
\label{RemkControlSyst}
If $X$ is the closure of an open subset of $\mathbb R^d$, the constraint $\abs{\dot\gamma}(t) \leq k(t, \gamma(t))$, $t \geq 0$, imposed on a $k$-ad\-mis\-sible curve can be interpreted as the fact that $\gamma$ is a solution to a control system, justifying thus the optimal control terminology used in Definition~\ref{DefiOCP}. Indeed, a curve $\gamma$ is $k$-admissible if and only if there exists a measurable function $u\colon \mathbb R_+ \to \overline B_{\mathbb R^d}(0, 1)$ such that
\begin{equation}
\label{AdmissibleIsControlSystem}
\dot\gamma(t) = k(t, \gamma(t)) u(t).
\end{equation}
\end{remark}

\begin{remark}
The fact that we only consider trajectories $\gamma \in \mathcal C_X$ implies that $\gamma(t) \in X$ for every $t \in \mathbb R_+$, and this condition can often be interpreted as a \emph{state constraint} of the optimal control problem $\OCP(X, \Gamma, k, g)$. Indeed, in many applications, $X$ is a subset of a larger space $Y$ (for instance, $X = \overline\Omega$ and $Y = \mathbb R^d$ for some nonempty bounded open set $\Omega \subset \mathbb R^d$), and the condition $\gamma(t) \in X$ can be interpreted as the constraint of preventing $\gamma$ to leave $X$ and go into $Y \setminus X$.
\end{remark}

Given a mean field game $\MFG\allowbreak(X,\allowbreak \Gamma,\allowbreak K, g, m_0)$ and denoting by $m_t \in \mathcal P(X)$ the distribution of agents at time $t \geq 0$, we assume that each agent of the game solves the optimal control problem $\OCP(X, \Gamma, k, g)$ with $k\colon \mathbb R_+ \times X \to \mathbb R_+$ given by $k(t, x) = K(m_t, x)$. An equilibrium of $\MFG(X, \Gamma, K, g, m_0)$ is a situation in which the solutions of these optimal control problems by all agents will induce an evolution of the distribution of agents that coincides with the one given by $m_t$, $t \geq 0$. More precisely, we will use in this paper the following definition.

\begin{definition}[Equilibrium of $\MFG(X, \Gamma, K, g, m_0)$]
\label{DefiEquilibriumMFG}
Let $(X, \dist)$ be a complete and separable metric spa\-ce, $\Gamma \subset X$ be nonempty and closed, and $K\colon \mathcal P(X) \times X \to \mathbb R_+$ and $g \colon \Gamma \to \mathbb R_+$ be continuous functions. Let $m_0 \in \mathcal P(X)$, $Q \in \mathcal P(\mathcal C_X)$, and define $k\colon \mathbb R_+ \times X \to \mathbb R_+$ for $(t, x) \in \mathbb R_+ \times X$ by $k(t, x) = K({e_t}_\# Q, x)$.
\begin{enumerate}
\item\label{DefiWeakEquilibriumMFG} The measure $Q \in \mathcal P(\mathcal C_X)$ is said to be a \emph{weak Lagrangian equilibrium} (or simply \emph{weak equilibrium}) of $\MFG(X, \Gamma,\allowbreak K, g, m_0)$ if ${e_0}_\# Q = m_0$ and $Q$-almost every $\gamma \in \mathcal C_X$ is an optimal curve for $(k, g, 0, \gamma(0))$.
\item\label{DefiStrongEquilibriumMFG} The measure $Q \in \mathcal P(\mathcal C_X)$ is said to be a \emph{strong Lagrangian equilibrium} (or simply \emph{strong equilibrium}) of $\MFG(X, \Gamma,\allowbreak K, g, m_0)$ if ${e_0}_\# Q = m_0$ and every $\gamma \in \supp(Q)$ is an optimal curve for $(k, g, 0, \gamma(0))$.
\end{enumerate}
\end{definition}

To simplify the notation, given $K\colon \mathcal P(X) \times X \to \mathbb R_+$ and $m\colon \mathbb R_+ \to \mathcal P(X)$, we define $k\colon \mathbb R_+ \times X \to \mathbb R_+$ by $k(t, x) = K(m_t, x)$ for $(t, x) \in \mathbb R_+ \times X$ and say that $\gamma \in \Lip(X)$ is $m$-admissible for $\MFG(X, \Gamma, K, g, m_0)$ if it is $k$-admissible for $\OCP(X, \Gamma, k, g)$, denoting $\Adm(k)$ simply by $\Adm(m)$. Given $(t_0, x_0) \in \mathbb R_+ \times X$, we say that $\gamma \in \Lip(X)$ is an optimal trajectory for $(m, g, t_0, x_0)$ if it is an optimal trajectory for $(k, g, t_0, x_0)$ for the optimal control problem $\OCP(X, \Gamma, k, g)$, and denote the set of optimal trajectories for $(m, g, t_0, x_0)$ by $\Opt(m, g, t_0, x_0)$. Given $Q \in \mathcal P(\mathcal C_X)$, we consider the time-dependent measure $m^Q\colon \mathbb R_+ \to \mathcal P(X)$ given by $m^Q_t = {e_t}_\# Q$, and denote $\Adm(m^Q)$ and $\Opt(m^Q, g, t_0, x_0)$ simply by $\Adm(Q)$ and $\Opt(Q, g, t_0, x_0)$, respectively.

\begin{remark}
\label{remk-equilibria-strong-weak}
Since $Q(\supp(Q)) = 1$, any strong equilibrium is also a weak equilibrium. The converse turns out to be true for the model we consider in this paper, as we will prove later in Corollary~\ref{coro-strong-iff-weak}, a result that generalizes \cite[Remark~4.6]{Dweik2020Sharp} and \cite[Proposition~3.7]{Sadeghi2022Nonsmooth} to the present model.
\end{remark}

\begin{remark}
\label{RemkOptimalTrajectoriesRemainStoppedAfterFinalTime}
The cost $\tau_0(t_0, \gamma) + G(t_0, \gamma)$ only takes into account the values of $\gamma$ on the interval $[t_0, t_0 + \tau_0(t_0, \gamma)]$ and, in particular, if $\gamma \in \Adm(k)$ is a minimizer of $\tau_0(t_0, \cdot) + G(t_0, \cdot)$ with the constraint $\gamma(t_0) = x_0$, then any other trajectory $\widetilde\gamma \in \Adm(k)$ coinciding with $\gamma$ in $[t_0, t_0 + \tau_0(t_0, \gamma)]$ is also a minimizer of the same cost with this constraint. In order to avoid ambiguity on the behavior of minimizers before $t_0$ or after $t_0 + \tau_0(t_0, \gamma)$, Definition~\ref{DefiOCP}\ref{DefiOCPOptimalTraj} defines an \emph{optimal trajectory} to this minimization problem as being necessarily constant in the intervals $[0, t_0]$ and $[t_0 + \tau_0(t_0, \gamma), +\infty)$.

From the point of view of the mean field game $\MFG(X, \Gamma, K, g, m_0)$, the above choice leads to a concentration of agents on the target set $\Gamma$: interpreting the game as a crowd motion model, this would mean that agents that leave the domain $X$ through $\Gamma$ remain stopped at their arrival position at $\Gamma$, which creates congestion in $\Gamma$. In order to consider that agents of the game ``disappear'' when they reach $\Gamma$, one may consider, for modeling purposes, that, for $K$ given by \eqref{IntroK}, the function $\eta$ is a cut-off function, equal to $1$ everywhere on $X$ except on a neighborhood of $\Gamma$ and vanishing at $\Gamma$. Notice, however, that such an assumption on $K$ is not necessary for the results proved in this paper.
\end{remark}

\subsection{Main assumptions and their consequences}
\label{sec-existence}

Let us now present the main assumptions needed in the sequel. The following assumptions are common to $\MFG(X, \Gamma, K, g, m_0)$ and $\OCP(X, \Gamma, k, g)$.
\begin{hypothesis}
\item\label{Hypo-X-SigmaCompact} $(X, \dist)$ is a metric space and there exists $\mathbf 0 \in X$ such that, for every $R > 0$, the closed ball $\overline B_X(\mathbf 0, R)$ is compact.
\item\label{Hypo-Gamma} $\Gamma \subset X$ is nonempty and closed.
\item\label{Hypo-g} The function $g\colon \Gamma \to \mathbb R_+$ is Lipschitz continuous, with Lipschitz constant $L_g$.
\item\label{Hypo-X-dist} There exists $D > 0$ such that, for every $x, y \in X$, there exist $T \in [0, D\dist(x, y)]$ and $\gamma \in \Lip([0, T],\allowbreak X)$ such that $\gamma(0) = x$, $\gamma(T) = y$, and $\abs{\dot\gamma}(t) = 1$ for almost every $t \in [0, T]$.
\end{hypothesis}

The following assumptions are specific to $\OCP(X, \Gamma, k, g)$.
\begin{hypothesis}[resume]
\item\label{HypoOCP-k-Bound} The function $k\colon \mathbb R_+ \times X \to \mathbb R_+$ is continuous and there exist $K_{\min}, K_{\max} \in \mathbb R_+^\ast$ such that, for all $(t, x) \in \mathbb R_+ \times X$, one has $k(t, x) \in [K_{\min}, K_{\max}]$.
\item\label{HypoOCP-k-Lip} For every $R > 0$, there exists $L_R > 0$ such that, for every $t \in \mathbb R_+$ and $x_1, x_2 \in \overline B_X(\mathbf 0, R)$, we have
\[
\abs{k(t, x_1) - k(t, x_2)} \leq L_R \dist(x_1, x_2).
\]
\item\label{HypoOCP-g-compatible} Hypotheses~\ref{Hypo-g} and \ref{HypoOCP-k-Bound} are satisfied with $L_g K_{\max} < 1$.
\end{hypothesis}

Finally, we state the following assumptions, specific to $\MFG(X, \Gamma, K, g, m_0)$.
\begin{hypothesis}[resume]
\item\label{HypoMFG-K-Bound} The function $K\colon \mathcal P(X) \times X \to \mathbb R_+$ is continuous and there exist $K_{\min}, K_{\max} \in \mathbb R_+^\ast$ such that, for all $(\mu, x) \in \mathcal P(X) \times X$, one has $K(\mu, x) \in [K_{\min}, K_{\max}]$.
\item\label{HypoMFG-K-Lip} For every $R > 0$, there exists $L_R > 0$ such that, for every $\mu \in \mathcal P(X)$ and $x_1, x_2 \in \overline B_X(\mathbf 0, R)$, we have
\[
\abs{K(\mu, x_1) - K(\mu, x_2)} \leq L_R \dist(x_1, x_2).
\]
\item\label{HypoMFG-g-compatible} Hypotheses~\ref{Hypo-g} and \ref{HypoMFG-K-Bound} are satisfied with $L_g K_{\max} < 1$.
\end{hypothesis}

Hypotheses~\ref{Hypo-X-SigmaCompact}--\ref{Hypo-X-dist} are the standard assumptions used in most of the results of this paper. Hypothesis~\ref{Hypo-X-SigmaCompact} is inspired by the case where $X$ is a closed subset of a finite-dimensional vector space, and it implies, in particular, that $X$ is $\sigma$-compact, complete, and separable. Clearly, if \ref{Hypo-X-SigmaCompact} is satisfied for some element $\mathbf 0 \in X$, then it is also satisfied with $\mathbf 0$ replaced by any element $x \in X$. In the sequel, we shall always consider that $\mathbf 0$ is a fixed element of $X$.

Hypothesis~\ref{Hypo-X-dist} provides a relation between the distance $\dist(x, y)$ and the length of curves from $x$ to $y$ in $X$, stating that the former is, up to a constant, an upper bound on the latter. This assumption is satisfied when the geodesic metric induced by $\dist$ is equivalent to $\dist$ itself. In particular, it is satisfied if $X$ is a length space. Moreover, \ref{Hypo-X-dist} implies that $X$ is path-connected.

One of the important consequences of \ref{HypoOCP-k-Bound}, which we will use frequently in the sequel, is that it implies that $\Adm(k) \subset \Lip_{K_{\max}}(X)$.

Hypotheses~\ref{HypoOCP-k-Lip} and \ref{HypoMFG-K-Lip} can be reformulated by saying that, on every compact subset of $X$, the functions $k$ and $K$ are Lipschitz continuous with respect to their second variable, uniformly with respect to the first one.

Notice that, under suitable assumptions on $\kappa$, $\chi$, and $\eta$, the function $K$ defined in \eqref{IntroK} satisfies \ref{HypoMFG-K-Bound} and \ref{HypoMFG-K-Lip}. More precisely, we have the following result.

\begin{proposition}
\label{PropFConvolution}
Let $(X, \dist)$ be a complete and separable metric space, $\kappa \in \Lip(\mathbb R_+, \mathbb R_+^\ast)$, $\chi \in \mathcal C(X \times X, \mathbb R_+)$ be bounded, $\eta \in \mathcal C(X, \mathbb R_+)$ be bounded, and define $K\colon \mathcal P(X) \times X \to \mathbb R_+$ by \eqref{IntroK}. Assume that, for every $R > 0$, there exists $L_{\chi, R} > 0$ such that, for every $x_1, x_2 \in \overline B_X(\mathbf 0, R)$ and $y \in X$, we have
\[
\abs{\chi(x_1, y) - \chi(x_2, y)} \leq L_{\chi, R} \dist(x_1, x_2).
\]
Then $K$ satisfies \ref{HypoMFG-K-Bound} and \ref{HypoMFG-K-Lip}.
\end{proposition}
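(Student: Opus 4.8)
The plan is to verify the two hypotheses separately, using the bounds on $\kappa$, $\chi$, and $\eta$ for \ref{HypoMFG-K-Bound} and exploiting the Lipschitz continuity of $\kappa$ together with the partial Lipschitz bound on $\chi$ for \ref{HypoMFG-K-Lip}. Throughout, write $I(\mu, x) = \int_X \chi(x, y) \eta(y) \diff\mu(y)$, so that $K(\mu, x) = \kappa(I(\mu, x))$; note that $I(\mu, x)$ is well defined and finite for every $(\mu, x)$ since $\chi$ and $\eta$ are bounded and $\mu$ is a probability measure, and $I(\mu, x) \in [0, \|\chi\|_\infty \|\eta\|_\infty]$.

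For \ref{HypoMFG-K-Bound}, first observe that $K$ takes values in $\mathbb R_+$ and is bounded: since $\kappa$ is continuous on the compact interval $[0, \|\chi\|_\infty \|\eta\|_\infty]$ and takes values in $\mathbb R_+^\ast$, we may set $K_{\min} = \min_{[0, \|\chi\|_\infty \|\eta\|_\infty]} \kappa > 0$ and $K_{\max} = \max_{[0, \|\chi\|_\infty \|\eta\|_\infty]} \kappa < +\infty$, and then $K(\mu, x) \in [K_{\min}, K_{\max}]$ for all $(\mu, x)$. The remaining point is the joint continuity of $K$ on $\mathcal P(X) \times X$. Since $\kappa$ is continuous, it suffices to show $I$ is continuous. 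If $(\mu_n, x_n) \to (\mu, x)$, then by the triangle inequality $|I(\mu_n, x_n) - I(\mu, x)| \leq |I(\mu_n, x_n) - I(\mu_n, x)| + |I(\mu_n, x) - I(\mu, x)|$. The second term tends to $0$ by weak convergence $\mu_n \rightharpoonup \mu$, since $y \mapsto \chi(x, y)\eta(y)$ is continuous and bounded. For the first term, one uses that the functions $y \mapsto \chi(x_n, y)\eta(y)$ converge to $y \mapsto \chi(x, y)\eta(y)$ uniformly on $X$: this follows from continuity of $\chi$ together with boundedness — more carefully, since the $x_n$ eventually lie in a fixed ball $\overline B_X(\mathbf 0, R)$, we may invoke the local Lipschitz bound on $\chi$ to get $\sup_y |\chi(x_n, y) - \chi(x, y)| \leq L_{\chi, R} \dist(x_n, x) \to 0$, and hence $|I(\mu_n, x_n) - I(\mu_n, x)| \leq L_{\chi, R}\dist(x_n, x)\|\eta\|_\infty \to 0$ uniformly in $n$. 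This proves continuity of $I$, hence of $K$.

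For \ref{HypoMFG-K-Lip}, fix $R > 0$ and let $L_R^\kappa$ be the Lipschitz constant of $\kappa$ (which exists since $\kappa \in \Lip(\mathbb R_+, \mathbb R_+^\ast)$). For $\mu \in \mathcal P(X)$ and $x_1, x_2 \in \overline B_X(\mathbf 0, R)$, write
\[
\abs{K(\mu, x_1) - K(\mu, x_2)} = \abs{\kappa(I(\mu, x_1)) - \kappa(I(\mu, x_2))} \leq L_R^\kappa \abs{I(\mu, x_1) - I(\mu, x_2)}.
\]
Then estimate the inner difference using the Lipschitz bound on $\chi$:
\[
\abs{I(\mu, x_1) - I(\mu, x_2)} \leq \int_X \abs{\chi(x_1, y) - \chi(x_2, y)} \eta(y) \diff\mu(y) \leq L_{\chi, R} \dist(x_1, x_2) \int_X \eta(y) \diff\mu(y) \leq L_{\chi, R} \|\eta\|_\infty \dist(x_1, x_2),
\]
using that $\mu$ is a probability measure. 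Combining the two displays gives the desired inequality with $L_R = L_R^\kappa L_{\chi, R} \|\eta\|_\infty$, which is independent of $\mu$, as required.

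I do not expect any genuine obstacle here; the only point requiring a little care is the joint continuity in \ref{HypoMFG-K-Bound}, where one must handle the simultaneous variation of $\mu$ and $x$. The clean way, as above, is to split the difference and use that weak convergence handles the measure while the local Lipschitz (or merely uniform-on-compacts continuity) of $\chi$ handles the spatial variable uniformly over the — eventually bounded — sequence $(x_n)$; this is where the hypothesis $\overline B_X(\mathbf 0, R)$ compact (so that convergent sequences stay in a fixed ball) and the assumed local Lipschitz bound on $\chi$ are used. An alternative is to note that $\chi\eta$ is continuous and bounded on $X \times X$ and argue directly, but the split argument makes the role of each hypothesis transparent.
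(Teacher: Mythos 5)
Your proof is correct and follows essentially the same route as the paper: both factor $K = \kappa \circ I$, obtain the bounds from the boundedness of $\chi\eta$ and the positivity and continuity of $\kappa$, get \ref{HypoMFG-K-Lip} from the local Lipschitz bound on $\chi$ integrated against $\mu$, and prove joint continuity by combining weak convergence (for the measure variable) with the uniform-in-$\mu$ Lipschitz estimate in $x$. The only cosmetic remark is that your aside about needing $\overline B_X(\mathbf 0, R)$ compact is superfluous — a convergent sequence $(x_n)$ lies in a fixed ball simply because it is bounded, and indeed the proposition does not assume \ref{Hypo-X-SigmaCompact}.
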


A result very similar to Proposition~\ref{PropFConvolution} was shown in \cite[Proposition~3.1]{Mazanti2019Minimal} in the case where $X = \overline\Omega$ for some nonempty bounded open set $\Omega \subset \mathbb R^d$, and with additional assumptions on $\chi$ that also allow one to prove that $K$ is Lipschitz continuous when considering the Wasserstein distance $\mathbf W_1$ in $\mathcal P(\overline\Omega)$. We provide here an adaptation of that proof to our current setting.

\begin{proof}[Proof of Proposition~\ref{PropFConvolution}]
Let $E\colon \mathcal P(X) \times X \to \mathbb R_+$ be defined by
\[
E(\mu, x) = \int_{X} \chi(x, y) \eta(y) \diff \mu(y).
\]

Let $R > 0$. For every $\mu \in \mathcal P(X)$ and $x_1, x_2 \in \overline B_X(\mathbf 0, R)$, we have
\[
\abs*{E(\mu, x_1) - E(\mu, x_2)} \leq L_\chi \dist(x_1, x_2) \int_{X} \eta(y) \diff\mu(t).
\]
Since $\eta$ is bounded, the integral in the above expression is bounded independently of $\mu$, and thus $E$ is Lipschitz continuous with respect to its second variable in $\overline B_X(\mathbf 0, R)$, uniformly with respect to the first one. Since $K = \kappa \circ E$, we deduce that $K$ satisfies \ref{HypoMFG-K-Lip}.

Since $\chi$ and $\eta$ are bounded, $E$ is also bounded by some constant $M > 0$. Since $K = \kappa \circ E$ and $\kappa$ is continuous and takes values in $\mathbb R_+^\ast$, we deduce that $K$ takes values in $[K_{\min}, K_{\max}]$, where $K_{\min} = \min_{s \in [0, M]} \kappa(s) > 0$ and $K_{\max} = \max_{s \in [0, M]} \kappa(s) > 0$.

If $(\mu_n)_{n \in \mathbb N}$ is a sequence in $\mathcal P(X)$ converging to some $\mu \in \mathcal P(X)$ and $x \in X$ is given, then, since $y \mapsto \chi(x, y) \eta(y)$ is continuous and bounded, we deduce that $E(\mu_n, x) \to E(\mu, x)$ as $n \to +\infty$. Together with the fact that $(\mu, x) \mapsto E(\mu, x)$ is Lipschitz continuous in $x \in \overline B_X(\mathbf 0, R)$ uniformly in $\mu$ for every $R > 0$, we conclude that $E$ is continuous on $\mathcal P(X) \times X$, and thus, since $\kappa$ is continuous, we deduce that $K$ satisfies \ref{HypoMFG-K-Bound}.
\end{proof}

Hypotheses~\ref{HypoOCP-g-compatible} and \ref{HypoMFG-g-compatible} can be seen as restriction on the Lipschitz constant of $g$ in terms of the upper bound $K_{\max}$ on $k$ or $K$. This is a standard assumption in optimal control problems with free final time and boundary costs (see, e.g., \cite[(8.6) and Remark~8.1.5]{Cannarsa2004Semiconcave} and \cite{Dweik2018Optimal, Dweik2020Sharp}), its importance being the following property, whose proof is straightforward.

\begin{lemma}
\label{LemmTimePlusGIncreases}
Assume that \ref{Hypo-X-SigmaCompact}--\ref{Hypo-g} are satisfied and let $\gamma \in \mathcal C_X$ be Lipschitz continuous, with Lipschitz constant $L_\gamma$ satisfying $L_g L_\gamma < 1$. If $t_1, t_2 \in \mathbb R_+$ are such that $t_1 < t_2$ and $\gamma(t_1), \gamma(t_2) \in \Gamma$, then
\[
t_1 + g(\gamma(t_1)) < t_2 + g(\gamma(t_2)).
\]
\end{lemma}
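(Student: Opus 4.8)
The plan is to chain together the two Lipschitz estimates available from the hypotheses. Since $\gamma(t_1), \gamma(t_2) \in \Gamma$ by assumption, the quantities $g(\gamma(t_1))$ and $g(\gamma(t_2))$ are well defined, and Hypothesis~\ref{Hypo-g} gives
\[
g(\gamma(t_1)) - g(\gamma(t_2)) \leq \abs{g(\gamma(t_1)) - g(\gamma(t_2))} \leq L_g \dist(\gamma(t_1), \gamma(t_2)).
\]
Then, using that $\gamma$ is $L_\gamma$-Lipschitz, $\dist(\gamma(t_1), \gamma(t_2)) \leq L_\gamma \abs{t_2 - t_1} = L_\gamma (t_2 - t_1)$, where the last equality uses $t_1 < t_2$. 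Combining the two bounds yields $g(\gamma(t_1)) - g(\gamma(t_2)) \leq L_g L_\gamma (t_2 - t_1)$.

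To conclude, I would invoke the strict inequality $L_g L_\gamma < 1$ together with $t_2 - t_1 > 0$, which gives $L_g L_\gamma (t_2 - t_1) < t_2 - t_1$, hence $g(\gamma(t_1)) - g(\gamma(t_2)) < t_2 - t_1$. Rearranging this inequality produces $t_1 + g(\gamma(t_1)) < t_2 + g(\gamma(t_2))$, which is the claim. Note that Hypotheses~\ref{Hypo-X-SigmaCompact} and \ref{Hypo-Gamma} are used only implicitly, to ensure that $X$ is a metric space and that $\Gamma$ is a subset of $X$ on which $g$ is defined; the real content is entirely in \ref{Hypo-g} and the strictness $L_g L_\gamma < 1$.

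There is essentially no obstacle here: the only point requiring a moment of care is that the argument needs the \emph{strict} inequality $L_g L_\gamma < 1$ (not merely $\leq$) together with $t_1 \neq t_2$ to upgrade the non-strict Lipschitz estimate to a strict conclusion, and that $g$ is only evaluated at the endpoints $\gamma(t_1), \gamma(t_2)$, which lie in $\Gamma$ by hypothesis, so no extension of $g$ beyond $\Gamma$ is needed.
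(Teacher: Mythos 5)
Your proof is correct and is precisely the straightforward argument the paper has in mind (the paper omits the proof, describing it only as "straightforward"): chain the $L_g$-Lipschitz bound on $g$ with the $L_\gamma$-Lipschitz bound on $\gamma$ and use the strict inequality $L_g L_\gamma < 1$ together with $t_2 - t_1 > 0$ to get strictness. Nothing is missing.
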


As a consequence of Lemma~\ref{LemmTimePlusGIncreases}, we also obtain the following result.

\begin{lemma}
\label{lemm-tau-plus-G-lsc}
Assume that \ref{Hypo-X-SigmaCompact}--\ref{Hypo-g} are satisfied and let $C > 0$ be such that $L_g C < 1$. Then the function $(t, \gamma) \mapsto \tau(t, \gamma) + G(t, \gamma)$ is lower semicontinuous on $\mathbb R_+ \times \Lip_C(X)$.
\end{lemma}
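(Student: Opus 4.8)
The plan is to use the sequential characterization of lower semicontinuity, which is legitimate here because $\mathbb R_+ \times \Lip_C(X)$ is metrizable: under \ref{Hypo-X-SigmaCompact} the space $X$ is $\sigma$-compact, complete and separable, so $\mathcal C_X$ is Polish and $\Lip_C(X) \subset \mathcal C_X$ carries a metrizable subspace topology. So I would fix a sequence $(t_n, \gamma_n)_{n \in \mathbb N}$ converging to $(t_0, \gamma)$ in $\mathbb R_+ \times \Lip_C(X)$ and show that $\liminf_{n \to \infty} \bigl(\tau(t_n, \gamma_n) + G(t_n, \gamma_n)\bigr) \geq \tau(t_0, \gamma) + G(t_0, \gamma)$. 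After passing to a subsequence I may assume the left-hand side is a genuine limit $\ell$, and that $\ell < +\infty$ (otherwise there is nothing to prove); in particular $\tau(t_n, \gamma_n) + G(t_n, \gamma_n) < +\infty$ for all $n$, which forces $\tau(t_n, \gamma_n) < +\infty$.

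Next, for each $n$ I set $s_n = t_n + \tau(t_n, \gamma_n)$, which by Remark~\ref{remk-tau} is the smallest time at which $\gamma_n$ meets $\Gamma$ at or after $t_n$, so that $\gamma_n(s_n) \in \Gamma$ and $\tau(t_n, \gamma_n) + G(t_n, \gamma_n) = (s_n - t_n) + g(\gamma_n(s_n))$. Since $g \geq 0$, this yields $s_n - t_n \leq \tau(t_n, \gamma_n) + G(t_n, \gamma_n)$; as $t_n \to t_0$ and the costs converge to $\ell$, the sequence $(s_n)_{n \in \mathbb N}$ is bounded, and I may pass to a further subsequence so that $s_n \to s_0$ for some $s_0 \geq t_0$ (the inequality $s_0 \geq t_0$ coming from $s_n \geq t_n$).

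The one step that needs a little care is the joint passage to the limit in $\gamma_n(s_n)$, and it is here that the uniform Lipschitz bound matters. Since each $\gamma_n$ is $C$-Lipschitz,
\[
\dist(\gamma_n(s_n), \gamma(s_0)) \leq C\abs{s_n - s_0} + \dist(\gamma_n(s_0), \gamma(s_0)),
\]
and the last term tends to $0$ by uniform convergence of $\gamma_n$ to $\gamma$ on the compact interval $[0, \sup_n s_n]$; hence $\gamma_n(s_n) \to \gamma(s_0)$. Since $\Gamma$ is closed, $\gamma(s_0) \in \Gamma$, and since $g$ is continuous, $g(\gamma_n(s_n)) \to g(\gamma(s_0))$. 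Therefore $\ell = \lim_n \bigl[(s_n - t_n) + g(\gamma_n(s_n))\bigr] = (s_0 - t_0) + g(\gamma(s_0))$.

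It remains to compare this value with $\tau(t_0, \gamma) + G(t_0, \gamma)$. Because $\gamma(s_0) \in \Gamma$ and $s_0 \geq t_0$, we have $\tau(t_0, \gamma) < +\infty$ and $s^\ast := t_0 + \tau(t_0, \gamma) \leq s_0$, with $\gamma(s^\ast) \in \Gamma$. If $s^\ast = s_0$ there is nothing left to do; otherwise, applying Lemma~\ref{LemmTimePlusGIncreases} to $\gamma$ (which lies in $\Lip_C(X)$ with $L_g C < 1$) with $t_1 = s^\ast < t_2 = s_0$ gives $s^\ast + g(\gamma(s^\ast)) \leq s_0 + g(\gamma(s_0))$. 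In both cases,
\[
\tau(t_0, \gamma) + G(t_0, \gamma) = (s^\ast - t_0) + g(\gamma(s^\ast)) \leq (s_0 - t_0) + g(\gamma(s_0)) = \ell,
\]
which is the desired inequality. The main (and rather mild) obstacle is thus the control of the hitting times $s_n$ and their joint convergence with $\gamma_n$; this is resolved using only $g \geq 0$ for the boundedness of $(s_n)$ and the uniform Lipschitz constant $C$ for the passage to the limit, while Lemma~\ref{LemmTimePlusGIncreases} is what ensures the limiting hitting instant cannot be cheaper than the actual first exit time of $\gamma$.
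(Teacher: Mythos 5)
Your proof is correct and follows essentially the same route as the paper: extract a subsequence along which the cost converges and is finite, pass to the limit in the hitting times and hitting points using the closedness of $\Gamma$ and the uniform-on-compacts convergence, and conclude via Lemma~\ref{LemmTimePlusGIncreases}. If anything, your argument is slightly more explicit than the paper's on two points it leaves implicit --- the boundedness of the hitting times $s_n$ (via $g \geq 0$) and the joint convergence $\gamma_n(s_n) \to \gamma(s_0)$ (via the uniform Lipschitz constant $C$).
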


\begin{proof}
Let $(t_n, \gamma_n)_{n \in \mathbb N}$ be a sequence in $\mathbb R_+ \times \Lip_C(X)$ converging, in the topology of $\mathbb R_+ \times \mathcal C_X$, to some $(t, \gamma) \in \mathbb R_+ \times \mathcal C_X$. Since $\Lip_C(X)$ is closed, we have $\gamma \in \Lip_C(X)$. We want to prove that
\[
\tau(t, \gamma) + G(t, \gamma) \leq \liminf_{n \to +\infty} \tau(t_n, \gamma_n) + G(t_n, \gamma_n).
\]

There is nothing to prove if the right-hand side of the above inequality is equal to $+\infty$, so we assume, with no loss of generality, that it is finite. We extract a subsequence $(t_{n_k}, \gamma_{n_k})_{k \in \mathbb N}$ of $(t_n, \gamma_n)_{n \in \mathbb N}$ such that $\tau(t_{n_k}, \gamma_{n_k}) + G(t_{n_k}, \gamma_{n_k}) < +\infty$ for every $k \in \mathbb N$ and
\[
\lim_{k \to +\infty} \tau(t_{n_k}, \gamma_{n_k}) + G(t_{n_k}, \gamma_{n_k}) = \liminf_{n \to +\infty} \tau(t_n, \gamma_n) + G(t_n, \gamma_n).
\]
For simplicity, we denote $(t_{n_k}, \gamma_{n_k})_{k \in \mathbb N}$ simply by $(t_n, \gamma_n)_{n \in \mathbb N}$ in the sequel. Also, without loss of generality, we assume that $(\tau(t_n, \gamma_n))_{n \in \mathbb N}$ converges to some $\tau_\ast \in \mathbb R_+$. Then $G(t_n, \gamma_n) = g(\gamma_n(t_n + \tau(t_n, \gamma_n))) \to g(\gamma(t + \tau_\ast))$ as $n \to +\infty$.

Since $\gamma_n(t_n + \tau(t_n, \gamma_n)) \in \Gamma$ for every $n \in \mathbb N$ and $\Gamma$ is closed, we obtain that $\gamma(t + \tau_\ast) \in \Gamma$, and thus $\tau(t, \gamma) \leq \tau_\ast$. Hence $t + \tau(t, \gamma) \leq t + \tau_\ast$ and, by Lemma~\ref{LemmTimePlusGIncreases}, we deduce that $t + \tau(t, \gamma) + G(t, \gamma) \leq t + \tau_\ast + g(\gamma(t + \tau_\ast))$, which is the desired inequality.
\end{proof}

\subsection{Comparison to other works}
\label{sec:compare}

The model considered here can be seen as a generalization of the mean field game models considered in \cite{Mazanti2019Minimal, Dweik2020Sharp, Sadeghi2021Characterization, Sadeghi2022Multi, Sadeghi2022Nonsmooth}, and we now provide a more detailed comparison between them.

First of all, the goal of \cite{Mazanti2019Minimal, Dweik2020Sharp, Sadeghi2021Characterization, Sadeghi2022Multi, Sadeghi2022Nonsmooth} is broader than that of the present paper: in addition to showing existence of equilibria, these papers also aim at proving that equilibria necessarily satisfy a system of partial differential equations, the MFG system, showing also, when possible, the converse statement (namely, that solutions of the MFG system are equilibria). These additional properties require more assumptions than just \ref{Hypo-X-SigmaCompact}--\ref{HypoMFG-g-compatible}; in particular, in order to properly write the partial differential equations in the MFG system, one usually works on the closure of an open subset of $\mathbb R^d$ instead of a metric space $(X, \dist)$ satisfying \ref{Hypo-X-SigmaCompact}. On the other hand, concerning existence of equilibria, all of the references \cite{Mazanti2019Minimal, Dweik2020Sharp, Sadeghi2021Characterization, Sadeghi2022Multi, Sadeghi2022Nonsmooth} make use of more restrictive assumptions than \ref{Hypo-X-SigmaCompact}--\ref{HypoMFG-g-compatible}, which allows for simpler proofs with respect to the ones presented here.

Among these works, \cite{Mazanti2019Minimal, Sadeghi2021Characterization, Sadeghi2022Multi, Sadeghi2022Nonsmooth} use only the notion of weak equilibrium, while \cite{Dweik2020Sharp} uses the notion of strong equilibrium. The fact that both notions coincide for the models treated in these references under suitable assumptions was already observed in \cite[Remark~4.6]{Dweik2020Sharp} and \cite[Proposition~3.7]{Sadeghi2022Nonsmooth}, a result that we also present later in Corollary~\ref{coro-strong-iff-weak} for the more general model we consider here.

As regards the characterization of the asymptotic behavior of equilibria, among \cite{Mazanti2019Minimal, Dweik2020Sharp, Sadeghi2021Characterization, Sadeghi2022Multi, Sadeghi2022Nonsmooth}, only \cite{Sadeghi2022Multi} considers this question. The main reason for that is that \cite{Mazanti2019Minimal, Dweik2020Sharp, Sadeghi2021Characterization, Sadeghi2022Nonsmooth} always consider that $X$ is compact and, in this case, one can easily prove that, at equilibrium, the distribution $m_t$ converges to a limit distribution $m_\infty$ in finite time, i.e., there exists $t_0 \geq 0$ such that $m_t = m_\infty$ for every $t \geq 0$ (see Theorem~\ref{ThmAsymp}\ref{ThmAsymp-FiniteTime} below). The asymptotic behavior of equilibria is studied in \cite{Sadeghi2022Multi} only in the case $X = \mathbb R^d$, and our main result on this question, Theorem~\ref{ThmAsymp}, is an extension of \cite[Theorem~5.8]{Sadeghi2022Multi} to our more general setting. We also provide further consequences of Theorem~\ref{ThmAsymp} in Corollary~\ref{coro:asymp}, which provides explicit convergence rates for mean field games in $\mathbb R^d$, a novelty with respect to \cite{Sadeghi2022Multi}.

Finally, the analysis of the dependence of equilibria and their asymptotic limits on the initial distribution of agents, which is the topic of Section~\ref{sec:dependence-m0}, is a novelty with respect to \cite{Mazanti2019Minimal, Dweik2020Sharp, Sadeghi2021Characterization, Sadeghi2022Multi, Sadeghi2022Nonsmooth}.

Now that we have compared the present paper with \cite{Mazanti2019Minimal, Dweik2020Sharp, Sadeghi2021Characterization, Sadeghi2022Multi, Sadeghi2022Nonsmooth} in what concerns the kind of results that are shown, we proceed to compare the models treated in those references and their assumptions with the setting of this paper.

In \cite{Mazanti2019Minimal}, the authors consider the mean field game $\MFG(X, \Gamma, K, 0, m_0)$, i.e., there is no cost at the exit position of an agent, and each agent minimizes only their time to reach $\Gamma$. Instead of \ref{Hypo-X-SigmaCompact}, the authors assume that $X$ is compact, which simplifies many technical details in the proofs when compared to the presentation of this paper. Further technical simplifications are also possible in \cite{Mazanti2019Minimal} thanks to the fact that \ref{HypoOCP-k-Lip} and \ref{HypoMFG-K-Lip} are replaced by the stronger assumptions of Lipschitz continuity of $k$ and $K$, respectively, when $\mathcal P(X)$ is endowed with the Wasserstein metric $\mathbf W_1$.

The article \cite{Dweik2020Sharp} considers the mean field game $\MFG(\overline\Omega, \partial\Omega, K, g, m_0)$, where $\Omega \subset \mathbb R^d$ is a nonempty bounded open set. Apart from this fact, the other assumptions required for the existence of equilibria in \cite{Dweik2020Sharp} are essentially the same as \ref{Hypo-X-SigmaCompact}--\ref{HypoMFG-g-compatible}. The fact that $X = \overline\Omega$ and $\Gamma = \partial\Omega$ implies that one does not need to consider the state constraint $\gamma(t) \in X$ in the analysis of the optimal control problem $\OCP(\overline\Omega, \partial\Omega, k, g)$, which allows for some simplifications in the proofs in \cite{Dweik2020Sharp} with respect to the ones presented here.

One of the main novelties of \cite{Sadeghi2022Multi} with respect to \cite{Mazanti2019Minimal, Dweik2020Sharp} is to consider a mean field game in the noncompact space $\mathbb R^d$. More precisely, \cite{Sadeghi2022Multi} addresses the mean field game $\MFG(\mathbb R^d, \Gamma, K, 0, m_0)$, where $\Gamma \subset \mathbb R^d$ is a nonempty closed set, and agents only minimize the time to reach $\Gamma$, with no arrival cost. Instead of \ref{HypoMFG-K-Lip}, \cite{Sadeghi2022Multi} works with a more restrictive assumption, requiring global Lipschitz behavior in the space variable $x$. Hence, with respect to \cite{Sadeghi2022Multi}, we replace here $\mathbb R^d$ by a more general metric space $(X, \dist)$ satisfying \ref{Hypo-X-SigmaCompact} and we work under only local Lipschitz behavior of $K$ with respect to its space variable. Note also that \cite{Sadeghi2022Multi} considers multipopulation mean field games, but the generalization of the techniques of this work to the multipopulation setting are straightforward, and we restrict our attention here to the single-population case in order to avoid cumbersome notation.

Finally, \cite{Sadeghi2021Characterization, Sadeghi2022Nonsmooth} consider both the same model, \cite{Sadeghi2022Nonsmooth} being an extended version of the conference paper \cite{Sadeghi2021Characterization}. They both consider $\MFG(\overline\Omega, \Gamma, K, 0, m_0)$, where $\Omega \subset \mathbb R^d$ is a nonempty bounded open set and $\Gamma \subset \overline\Omega$ is nonempty and closed. This model is actually a particular case of that of \cite{Mazanti2019Minimal}, but \cite{Sadeghi2021Characterization, Sadeghi2022Nonsmooth} work under more general assumptions on $k$ and $K$, which are just the particular case of \ref{HypoOCP-k-Lip} and \ref{HypoMFG-K-Lip} when $X$ is compact. The main difference of the present work with respect to those references is thus the fact that we consider here games in a more general metric space $(X, \dist)$ and with a cost $g$ on the exit position.

\section{Preliminary results}
\label{sec-prelim}

Before turning to the study of existence of equilibria of $\MFG(X, \Gamma, k, g, m_0)$ and their asymptotic behavior, we collect some important preliminary properties of the optimal control problem $\OCP(X, \Gamma, k, g)$ and the mean field game $\MFG(X, \Gamma, k, g, m_0)$ that are useful for the proofs of our main results in Section~\ref{sec:main}.

\subsection{Properties of the optimal control problem}

Note that $\OCP(X, \Gamma, k, g)$ is an optimal control problem with free final time, which is a classic subject in the optimal control literature (see, e.g., \cite{Cannarsa2004Semiconcave, Clarke1990Optimization}), but the assumptions \ref{Hypo-X-SigmaCompact}--\ref{HypoOCP-g-compatible} on $\OCP(X, \Gamma, k, g)$ are more general than the ones usually considered in the literature, allowing for more general spaces $X$ and fewer regularity assumptions on $\Gamma$ and $k$. For this reason, we provide here a detailed presentation of the properties of $\OCP(X, \Gamma, k, g)$ in order to highlight which hypotheses are required for each result.

The first result we provide is the following, on the closedness of $\Adm(k)$.

\begin{proposition}
\label{prop-adm-closed}
Consider the optimal control problem $\OCP(X, \Gamma, k, g)$ and assume that \ref{Hypo-X-SigmaCompact} and \ref{HypoOCP-k-Bound} are satisfied. Then the set $\Adm(k)$ is closed.
\end{proposition}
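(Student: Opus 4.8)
The plan is to take a sequence $(\gamma_n)_{n \in \mathbb N}$ in $\Adm(k)$ converging in $\mathcal C_X$ — that is, uniformly on compact subsets of $\mathbb R_+$ — to some $\gamma \in \mathcal C_X$, and to show that $\gamma \in \Adm(k)$. The first observation is that, by \ref{HypoOCP-k-Bound}, every $\gamma_n$ satisfies $\abs{\dot\gamma_n}(t) \leq k(t, \gamma_n(t)) \leq K_{\max}$ for a.e.\ $t$, so $\Adm(k) \subset \Lip_{K_{\max}}(X)$; since $\Lip_{K_{\max}}(X)$ is closed in $\mathcal C_X$ (a uniform limit of curves with a common Lipschitz constant has that same Lipschitz constant), we already get $\gamma \in \Lip_{K_{\max}}(X)$. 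In particular $\gamma$ is Lipschitz, hence absolutely continuous, so $\abs{\dot\gamma}(t)$ exists for a.e.\ $t \in \mathbb R_+$, and it remains only to verify the pointwise bound $\abs{\dot\gamma}(t) \leq k(t, \gamma(t))$ a.e.

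For this I would argue via the integrated form of the metric-derivative bound. For a Lipschitz (hence absolutely continuous) curve $\beta$, one has $\dist(\beta(s), \beta(t)) \leq \int_t^s \abs{\dot\beta}(r) \diff r$ for $t \le s$ (see, e.g., \cite[Theorem~1.1.2]{Ambrosio2005Gradient}), so $k$-admissibility of $\gamma_n$ gives, for every $0 \le t \le s$,
\[
\dist(\gamma_n(s), \gamma_n(t)) \leq \int_t^s k(r, \gamma_n(r)) \diff r.
\]
Now fix $R>0$ large enough that all the curves $\gamma_n$ (and $\gamma$) restricted to a fixed compact interval $[0, N]$ stay inside $\overline B_X(\mathbf 0, R)$ — this is possible because the $\gamma_n$ converge uniformly on $[0,N]$ and are uniformly Lipschitz, so their images on $[0,N]$ lie in a common bounded set. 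On this set $k$ is continuous; since $\gamma_n \to \gamma$ uniformly on $[0, N]$ and $\overline B_X(\mathbf 0, R)$ is compact (by \ref{Hypo-X-SigmaCompact}), $k$ is uniformly continuous on $[0,N] \times \overline B_X(\mathbf 0, R)$, whence $k(r, \gamma_n(r)) \to k(r, \gamma(r))$ uniformly in $r \in [0,N]$. Passing to the limit in the displayed inequality (using continuity of $\dist$ and uniform convergence of the integrands on a bounded interval) yields
\[
\dist(\gamma(s), \gamma(t)) \leq \int_t^s k(r, \gamma(r)) \diff r
\]
for all $0 \le t \le s$. Dividing by $s-t$ and letting $s \downarrow t$ at a Lebesgue point of $r \mapsto k(r, \gamma(r))$ (the map is continuous in $r$, so every point works) gives $\abs{\dot\gamma}(t) \leq k(t, \gamma(t))$ wherever $\abs{\dot\gamma}(t)$ exists, i.e.\ for a.e.\ $t$. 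Hence $\gamma \in \Adm(k)$.

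The only mildly delicate point — and the one I would treat carefully — is justifying the interchange of limit and integral, i.e.\ confirming that the curves stay in a common compact ball on each fixed time interval so that the (merely local, by \ref{HypoOCP-k-Lip}, but here only continuity is needed) good behavior of $k$ can be exploited; everything else is routine. Note that \ref{HypoOCP-k-Lip} is not actually required for this statement, only \ref{Hypo-X-SigmaCompact} (for compactness of balls, giving uniform continuity of $k$ on the relevant sets) and \ref{HypoOCP-k-Bound} (for the uniform Lipschitz bound and hence the reduction to $\Lip_{K_{\max}}(X)$), consistent with the hypotheses listed in the statement.
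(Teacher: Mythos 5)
Your proposal is correct and follows essentially the same route as the paper's proof: reduction to $\Lip_{K_{\max}}(X)$, the integrated form of the admissibility constraint via \cite[Theorem~1.1.2]{Ambrosio2005Gradient}, uniform convergence of $k(r,\gamma_n(r))$ on compact time intervals using \ref{Hypo-X-SigmaCompact}, and passage to the limit in the integral inequality. The only cosmetic difference is that the paper recovers the pointwise bound on $\abs{\dot\gamma}$ by invoking \cite[Theorem~1.1.2]{Ambrosio2005Gradient} once more, whereas you differentiate the integral inequality directly; both are valid.
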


\begin{proof}
Let $(\gamma_n)_{n \in \mathbb N}$ be a sequence in $\Adm(k)$ converging in $\mathcal C_X$ to some $\gamma \in \mathcal C_X$. Note that, since $\Adm(k) \subset \Lip_{K_{\max}}(X)$ and the latter set is closed in $\mathcal C_X$, we deduce that $\gamma \in \Lip_{K_{\max}}(X)$.

By \cite[Theorem~1.1.2]{Ambrosio2005Gradient} and using the fact that $\gamma_n \in \Adm(k)$, we obtain that, for every $n \in \mathbb N$ and $s, t \in \mathbb R_+$ with $s \leq t$, we have
\begin{equation}
\label{eq-gamma_n-admissible}
\dist(\gamma_n(s), \gamma_n(t)) \leq \int_s^t k(r, \gamma_n(r)) \diff r.
\end{equation}
For every $T_0 > 0$, since $\gamma_n \to \gamma$ in $\mathcal C_X$, there exists $R > 0$ such that $\gamma_n(t) \in \overline B_X(\mathbf 0, R)$ for every $n \in \mathbb N$ and $t \in [0, T_0]$. Since $k$ is continuous on $[0, T_0] \times \overline B_X(\mathbf 0, R)$, it is uniformly continuous on this set. Hence, $k(r, \gamma_n(r)) \to k(r, \gamma(r))$ as $n \to +\infty$, uniformly on $r \in [0, T_0]$. Thus, taking the limit as $n \to +\infty$ in \eqref{eq-gamma_n-admissible}, we deduce that
\[
\dist(\gamma(s), \gamma(t)) \leq \int_s^t k(r, \gamma(r)) \diff r
\]
for every $s, t \in [0, T_0]$ with $s \leq t$. Since $T_0 > 0$ is arbitrary, the previous inequality holds true for every $s, t \in \mathbb R_+$ with $s \leq t$. The conclusion now follows from \cite[Theorem~1.1.2]{Ambrosio2005Gradient}.
\end{proof}

Our next result concerns boundedness of the value function and of optimal trajectories on bounded subsets of initial conditions $x_0$ in $X$, which adapts \cite[Proposition~4.4]{Sadeghi2022Multi} to the case of metric space $X$ and an exit cost $g$.

\begin{proposition}
\label{prop-T-psi}
Consider the optimal control problem $\OCP(X, \Gamma, k, g)$ and assume that \ref{Hypo-X-SigmaCompact}--\ref{HypoOCP-k-Bound} and \ref{HypoOCP-g-compatible} are satisfied. Then there exist two nondecreasing maps with linear growth $\psi,\, T\colon\allowbreak\mathbb{R}_+ \allowbreak \to \mathbb{R}_+$ depending only on $\mathbf 0$, $\Gamma$, $g$, $D$, $K_{\min}$, and $K_{\max}$ such that, for every $R>0$ and $(t_0, x_0) \in \mathbb R_+ \times \overline B_X(\mathbf 0, R)$, we have $\varphi(t_0, x_0) \leq T(R)$ and, for every $\gamma \in \Opt(k, g, t_0, x_0)$, we have $\gamma(t) \in \overline B_X(\mathbf 0, \psi(R))$ for every $t \ge 0$.
\end{proposition}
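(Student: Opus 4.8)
The plan is to first bound the value function $\varphi(t_0, x_0)$ by exhibiting an explicit admissible trajectory reaching $\Gamma$, and then deduce the bound on optimal trajectories from the bound on their travel time plus the speed bound $K_{\max}$. First I fix a point $\overline x_\Gamma \in \Gamma$ (which exists by \ref{Hypo-Gamma}). Given $x_0 \in \overline B_X(\mathbf 0, R)$, by \ref{Hypo-X-dist} there exist $T_1 \in [0, D\dist(x_0, \overline x_\Gamma)]$ and a unit-speed Lipschitz curve from $x_0$ to $\overline x_\Gamma$; reparametrizing this curve at speed $K_{\min}$ (allowed since $k \geq K_{\min}$ by \ref{HypoOCP-k-Bound}), waiting at $x_0$ until time $t_0$, and then freezing at $\overline x_\Gamma$ after arrival produces a $k$-admissible curve $\beta$ with $\beta(t_0) = x_0$ and $\tau(t_0, \beta) \leq D\dist(x_0, \overline x_\Gamma)/K_{\min}$. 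Since $\dist(x_0, \overline x_\Gamma) \leq R + \dist(\mathbf 0, \overline x_\Gamma)$, this gives
\[
\varphi(t_0, x_0) \leq \frac{D(R + \dist(\mathbf 0, \overline x_\Gamma))}{K_{\min}} + g(\overline x_\Gamma) =: T(R),
\]
which is nondecreasing and affine in $R$, depending only on the listed data.

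Next I turn to the bound on optimal trajectories. Let $\gamma \in \Opt(k, g, t_0, x_0)$. The key point is that the \emph{travel time} $\tau(t_0, \gamma)$ alone (not just $\tau + G$) must be controlled: since $g \geq 0$ we have $\tau(t_0, \gamma) \leq \tau(t_0,\gamma) + G(t_0,\gamma) = \varphi(t_0,x_0) \leq T(R)$. Now $\gamma$ is constant on $[0, t_0]$ and on $[t_0 + \tau(t_0,\gamma), +\infty)$ by Definition~\ref{DefiOCP}\ref{DefiOCPOptimalTraj}, so for every $t \geq 0$ the point $\gamma(t)$ lies on the arc traversed during $[t_0, t_0 + \tau(t_0,\gamma)]$. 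Since $\Adm(k) \subset \Lip_{K_{\max}}(X)$, for any $t$ in that interval,
\[
\dist(\gamma(t), x_0) \leq K_{\max}(t - t_0) \leq K_{\max}\, T(R),
\]
hence $\dist(\gamma(t), \mathbf 0) \leq R + K_{\max} T(R) =: \psi(R)$ for all $t \geq 0$, and $\psi$ is again nondecreasing with linear growth and depends only on the stated quantities. This completes the argument.

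The main obstacle is making sure the reparametrization in the first step genuinely produces a curve in $\Lip(X) = \Lip(\mathbb R_+, X)$ defined on all of $\mathbb R_+$ with the correct metric-derivative bound a.e.: one must concatenate the constant piece on $[0,t_0]$, the rescaled geodesic-like arc on $[t_0, t_0 + T_1/K_{\min}]$, and the constant tail, and check that the resulting curve has metric derivative equal to $K_{\min} \leq k(t,\cdot)$ a.e.\ and is Lipschitz. This is routine — the only subtlety is that \ref{Hypo-X-dist} gives a unit-speed curve on a compact interval $[0,T]$, which we rescale by the factor $K_{\min}$ in time so that $\abs{\dot\beta} = K_{\min}$ a.e.\ there, and the junction points contribute a null set. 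A secondary point worth stating explicitly is that $T$ and $\psi$ must be chosen independently of $R$ in their functional form (they are affine functions of their argument), so that a single pair $(\psi, T)$ works for all $R > 0$ simultaneously; this is immediate from the formulas above. Everything else follows from \ref{HypoOCP-k-Bound} (the two-sided speed bound) and the nonnegativity of $g$.
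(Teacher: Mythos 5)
Your overall strategy is exactly the paper's: build an explicit admissible curve that waits at $x_0$, follows the curve from \ref{Hypo-X-dist} rescaled to speed $K_{\min}$, and freezes at a fixed point of $\Gamma$; bound $\varphi$ by the cost of that curve; then bound optimal trajectories via $\tau(t_0,\gamma) \leq \varphi(t_0,x_0) \leq T(R)$ and the $K_{\max}$-Lipschitz bound. The second half of your argument is complete and matches the paper's.

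There is, however, one gap in the first step. You assert
\[
\varphi(t_0,x_0) \leq \tau(t_0,\beta) + G(t_0,\beta) \leq \frac{D\dist(x_0,\overline x_\Gamma)}{K_{\min}} + g(\overline x_\Gamma),
\]
but the curve $\beta$ may enter $\Gamma$ strictly \emph{before} reaching $\overline x_\Gamma$. In that case $\tau(t_0,\beta)$ is the first hitting time and $G(t_0,\beta) = g(\beta(t_0+\tau(t_0,\beta)))$ is evaluated at the first hitting point, which need not be $\overline x_\Gamma$ and whose $g$-value could a priori exceed $g(\overline x_\Gamma)$. Your bound is therefore not justified as written; nothing in your argument controls $g$ at the early exit point. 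The paper closes exactly this case with Lemma~\ref{LemmTimePlusGIncreases}, which uses the Lipschitz continuity of $g$ from \ref{Hypo-g} together with $L_g K_{\min} \leq L_g K_{\max} < 1$ from \ref{HypoOCP-g-compatible} to show that $t \mapsto t + g(\beta(t))$ is increasing along crossings of $\Gamma$, so that the cost at the first exit is no larger than the cost of exiting at $\overline x_\Gamma$. This is precisely where \ref{HypoOCP-g-compatible} enters the proof; your closing remark that ``everything else follows from \ref{HypoOCP-k-Bound} and the nonnegativity of $g$'' is the telltale sign of the omission, since you never actually invoke $L_g K_{\max} < 1$ (or even the Lipschitz continuity of $g$). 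The gap is easily repaired — either cite Lemma~\ref{LemmTimePlusGIncreases}, or note that a cruder estimate $g(\beta(t_0+\tau)) \leq g(\overline x_\Gamma) + L_g D \dist(x_0,\overline x_\Gamma)$ still yields a (larger but) linear-growth $T(R)$ — but as written the claimed inequality does not follow.
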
 

\begin{proof}
Fix $y_0 \in \Gamma$ and let $G_0 = g(y_0)$. Let $R > 0$ and $(t_0, x_0) \in \mathbb R_+ \times \overline B_X(\mathbf 0, R)$. Let $T_0 \in [0, D \dist(x_0, y_0)]$ and $\widetilde\gamma\in \Lip([0, T_0], X)$ be such that $\widetilde\gamma(0) = x_0$, $\widetilde\gamma(T_0) = y_0$, and $\abs{\dot{\widetilde\gamma}}(t) = 1$ for almost every $t \in [0, T_0]$. Set $t_1 = t_0 + \frac{T_0}{K_{\min}}$ and let $\gamma\colon \mathbb R_+ \to X$ be defined by $\gamma(t) = x_0$ for $t \in [0, t_0]$, $\gamma(t) = \widetilde\gamma(K_{\min}(t - t_0))$ for $t \in [t_0, t_1]$, and $\gamma(t) = y_0$ for $t \geq t_1$. Then $\gamma \in \Adm(k)$, $\gamma(t_0) = x_0$, and $\tau(t_0, \gamma) \leq \frac{T_0}{K_{\min}}$. By Lemma~\ref{LemmTimePlusGIncreases}, we have
\[
t_0 + \tau(t_0, \gamma) + G(t_0, \gamma) \leq t_1 + g(y_0).
\]
Hence,
\[
\varphi(t_0, x_0) \leq \tau(t_0, \gamma) + G(t_0, \gamma) \leq G_0 + \frac{D \dist(x_0, y_0)}{K_{\min}} \leq G_0 + \frac{D \dist(\mathbf 0, y_0)}{K_{\min}} + \frac{D R}{K_{\min}},
\]
and thus the first part of the conclusion holds true with $T(R) = G_0 + \frac{D \dist(\mathbf 0, y_0)}{K_{\min}} + \frac{D R}{K_{\min}}$.

As for the second part of the statement, take $R>0$, $(t_0, x_0) \in \mathbb R_+ \times \overline B_X(\mathbf 0, R)$, and $\gamma \in \Opt(k, g, t_0, x_0)$. It suffices to prove that $\gamma(t) \in \overline B_X(\mathbf 0, \psi(R))$ for every $t \in [t_0, t_0 + \tau(t_0, \gamma)]$, since $\gamma$ is constant on $[0, t_0]$ and on $[t_0 + \tau(t_0, \gamma), +\infty)$. For any such $t$, using the facts that $\gamma \in \Adm(k) \subset \Lip_{K_{\max}}(X)$ and $\tau(t_0, \gamma) \leq \varphi(t_0, x_0) \leq T(R)$, we have
\begin{equation}
\label{eq-bound-admissible}
\dist(\mathbf 0, \gamma(t)) \leq \dist(\gamma(t), \gamma(t_0)) + \dist(\mathbf 0, x_0) \leq K_{\max} (t - t_0) + R \leq K_{\max} T(R) + R,
\end{equation}
and the conclusion holds true with $\psi(R) = K_{\max} T(R) + R$.
\end{proof}

We can now turn to the question of existence of optimal trajectories for $\OCP(X, \Gamma, k, g)$. The proof of our next result, omitted here, can be easily obtained by following the classical strategy in optimal control of considering the limit of a minimizing sequence for the right-hand side of \eqref{value function}, exploring also Proposition~\ref{prop-adm-closed} and the bound on the value function from Proposition~\ref{prop-T-psi} in order to bound exit times and costs of the trajectories in the minimizing sequence.

\begin{proposition}
\label{PropExistOpt}
Consider the optimal control problem $\OCP(X, \Gamma, k, g)$ and assume that \ref{Hypo-X-SigmaCompact}--\ref{HypoOCP-k-Bound} and \ref{HypoOCP-g-compatible} are satisfied. Then, for every $(t_0, x_0) \in \mathbb R_+ \times X$, there exists $\gamma \in \Opt(k, g, t_0, x_0)$.
\end{proposition}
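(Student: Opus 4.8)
The plan is a direct-method argument. Fix $(t_0, x_0) \in \mathbb R_+ \times X$ and pick $R > 0$ with $x_0 \in \overline B_X(\mathbf 0, R)$. Proposition~\ref{prop-T-psi} gives $\varphi(t_0, x_0) \le T(R) < +\infty$, so we may choose a minimizing sequence $(\gamma_n)_{n \in \mathbb N} \subset \Adm(k)$ with $\gamma_n(t_0) = x_0$ and $\tau(t_0, \gamma_n) + G(t_0, \gamma_n) \to \varphi(t_0, x_0)$; discarding finitely many terms, $\tau(t_0, \gamma_n) < +\infty$ for all $n$. First I would normalize each $\gamma_n$, replacing it by the curve that equals $x_0$ on $[0, t_0]$, coincides with $\gamma_n$ on $[t_0, t_0 + \tau(t_0, \gamma_n)]$, and is constant equal to $\gamma_n(t_0 + \tau(t_0, \gamma_n)) \in \Gamma$ afterwards. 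The appended pieces have zero metric derivative and the junctions are continuous (using $\gamma_n(t_0) = x_0$ at $t_0$ and $\gamma_n(t_0 + \tau(t_0, \gamma_n)) \in \Gamma$ at $t_0 + \tau(t_0, \gamma_n)$), so the new curve is still $k$-admissible; moreover $\tau(t_0, \cdot)$ and $G(t_0, \cdot)$ depend only on the restriction to $[t_0, t_0 + \tau(t_0, \cdot)]$, so these values are unchanged. Hence we may assume from the start that each $\gamma_n$ has this ``optimal-trajectory'' shape.

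The next step is to confine the sequence to a fixed compact set. Since $G \ge 0$, the numbers $\tau_n := \tau(t_0, \gamma_n)$ are bounded, say $\tau_n \le \tau_{\max}$; combining this with $\Adm(k) \subset \Lip_{K_{\max}}(X)$ (from \ref{HypoOCP-k-Bound}) and the triangle inequality as in \eqref{eq-bound-admissible} shows that every $\gamma_n$ takes its values in $\overline B_X(\mathbf 0, R')$ with $R' = K_{\max}\tau_{\max} + R$, a set that is compact by \ref{Hypo-X-SigmaCompact}. Thus $(\gamma_n)$ lies in $\Lip_{K_{\max}}(\mathbb R_+, \overline B_X(\mathbf 0, R'))$, which is compact in $\mathcal C_X$ by Arzel\`a--Ascoli, and $(\tau_n)$ is bounded, so after passing to a subsequence we get $\gamma_n \to \gamma$ in $\mathcal C_X$ for some $\gamma \in \Lip_{K_{\max}}(X)$ and $\tau_n \to \tau_\ast \in [0, \tau_{\max}]$.

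Finally I would identify $\gamma$ (after one more truncation) as an optimal curve. Proposition~\ref{prop-adm-closed} gives $\gamma \in \Adm(k)$, and uniform convergence on $[0, t_0]$ gives $\gamma \equiv x_0$ there, so in particular $\gamma(t_0) = x_0$. Since $L_g K_{\max} < 1$ by \ref{HypoOCP-g-compatible}, Lemma~\ref{lemm-tau-plus-G-lsc} applied to the constant sequence $t_0$ and to $(\gamma_n) \subset \Lip_{K_{\max}}(X)$ yields
\[
\tau(t_0, \gamma) + G(t_0, \gamma) \le \liminf_{n \to +\infty} \tau(t_0, \gamma_n) + G(t_0, \gamma_n) = \varphi(t_0, x_0).
\]
The definition of $\varphi$ in \eqref{value function} gives the reverse inequality, hence equality; in particular $G(t_0, \gamma) < +\infty$, so $\tau(t_0, \gamma) < +\infty$. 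Replacing $\gamma$ by the curve that coincides with it on $[0, t_0 + \tau(t_0, \gamma)]$ and is constant equal to $\gamma(t_0 + \tau(t_0, \gamma)) \in \Gamma$ afterwards --- which, exactly as above, stays in $\Adm(k)$ and leaves $\tau(t_0, \cdot) + G(t_0, \cdot)$ unchanged --- gives an element of $\Opt(k, g, t_0, x_0)$. I do not expect a serious obstacle here: the only points requiring care are the bookkeeping for the gluing and truncation surgeries (admissibility and invariance of the cost at each step, and the fact that $\gamma_n$ does not meet $\Gamma$ on $[t_0, t_0 + \tau_n)$) and the uniform confinement of the minimizing sequence to a compact ball, after which Propositions~\ref{prop-adm-closed} and \ref{prop-T-psi} together with Lemma~\ref{lemm-tau-plus-G-lsc} do the work.
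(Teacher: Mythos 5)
Your argument is correct and is exactly the proof the paper omits: the paper indicates that the result follows from a minimizing sequence for \eqref{value function}, bounded via Proposition~\ref{prop-T-psi}, combined with the closedness of $\Adm(k)$ from Proposition~\ref{prop-adm-closed}, and your write-up (including the gluing/truncation bookkeeping and the use of Lemma~\ref{lemm-tau-plus-G-lsc} for lower semicontinuity of the cost) fills in those details faithfully.
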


Another important classical property of optimal trajectories is that the restriction of an optimal trajectory is still optimal.

\begin{proposition}
\label{PropRestrictionIsOptimal}
Consider the optimal control problem $\OCP(X, \Gamma, k, g)$ and assume that \ref{Hypo-X-SigmaCompact}--\ref{Hypo-g}, \ref{HypoOCP-k-Bound}, and \ref{HypoOCP-g-compatible} are satisfied. Let $(t_0, x_0) \in \mathbb R_+ \times X$ and $\gamma_0 \in \Opt(k, g, t_0, x_0)$. Then, for every $t_1 \in [t_0, +\infty)$, denoting $x_1 = \gamma_0(t_1)$, the function $\gamma_1\colon \mathbb R_+ \to X$ defined by $\gamma_1(t) = x_1$ for $t \leq t_1$ and $\gamma_1(t) = \gamma_0(t)$ for $t \geq t_1$ satisfies $\gamma_1 \in \Opt(k, g, t_1, x_1)$.
\end{proposition}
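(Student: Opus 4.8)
The plan is to show that $\gamma_1$ is admissible, stays constant on the two required intervals, reaches $\Gamma$ in finite time, and realizes the minimum in \eqref{EqMinimalTime} for the data $(k, g, t_1, x_1)$. First I would record the easy structural facts. Admissibility of $\gamma_1$ is clear: on $[t_1, +\infty)$ it coincides with $\gamma_0 \in \Adm(k)$, hence $\abs{\dot\gamma_1}(t) = \abs{\dot\gamma_0}(t) \leq k(t, \gamma_0(t)) = k(t, \gamma_1(t))$ for a.e.\ $t \geq t_1$, while on $[0, t_1)$ the curve $\gamma_1$ is constant and $k \geq 0$, so $\abs{\dot\gamma_1} = 0 \leq k$ there. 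By construction $\gamma_1$ is constant on $[0, t_1]$ (equal to $x_1$) and, since $\gamma_0 \in \Opt(k, g, t_0, x_0)$ is constant on $[t_0 + \tau(t_0, \gamma_0), +\infty)$ with value in $\Gamma$, so is $\gamma_1$ on $[t_1 + \tau(t_1, \gamma_1), +\infty)$ provided $\tau(t_1, \gamma_1) < +\infty$. The key observation relating the exit times is that, for $t \geq t_1$, one has $\gamma_1(t) = \gamma_0(t)$, so $\tau(t_1, \gamma_1) = \tau(t_1, \gamma_0)$, and by Remark~\ref{remk-tau} (applied with $h = t_1 - t_0$) we have $t_1 + \tau(t_1, \gamma_0) \leq t_0 + \tau(t_0, \gamma_0) < +\infty$; in particular $\tau(t_1, \gamma_1) < +\infty$ and the terminal-position value $G(t_1, \gamma_1) = g(\gamma_1(t_1 + \tau(t_1, \gamma_1)))$ makes sense.

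Next I would establish the minimality \eqref{EqMinimalTime} for $(k, g, t_1, x_1)$. Suppose, for contradiction, that there is $\beta \in \Adm(k)$ with $\beta(t_1) = x_1$ and
\[
\tau(t_1, \beta) + G(t_1, \beta) < \tau(t_1, \gamma_1) + G(t_1, \gamma_1).
\]
The idea is to splice $\beta$ onto the initial segment of $\gamma_0$: define $\widetilde\beta$ to equal $\gamma_0$ on $[0, t_1]$ and to follow $\beta$ (shifted appropriately) on $[t_1, +\infty)$, i.e.\ $\widetilde\beta(t) = \beta(t)$ for $t \geq t_1$. Since $\gamma_0(t_1) = x_1 = \beta(t_1)$, the resulting curve is continuous; it is admissible because it is a concatenation of pieces of admissible curves at the junction time $t_1$ (the metric-derivative constraint $\abs{\dot{\widetilde\beta}}(t) \leq k(t, \widetilde\beta(t))$ holds a.e.\ on each of $[0, t_1]$ and $[t_1, +\infty)$, hence a.e.\ on $\mathbb{R}_+$), and $\widetilde\beta(t_0) = \gamma_0(t_0) = x_0$. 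Now I would compare costs for the data $(k, g, t_0, x_0)$. Here I need to be slightly careful: on $[t_0, t_1]$ the curve $\gamma_0$ might already touch $\Gamma$, but because $\gamma_0$ is optimal for $(k, g, t_0, x_0)$, Lemma~\ref{LemmTimePlusGIncreases} (applied with the Lipschitz constant $K_{\max}$, using \ref{HypoOCP-g-compatible}) guarantees that the first exit after $t_0$ is the one that matters — more precisely, $t_0 + \tau(t_0, \gamma_0) + G(t_0, \gamma_0) \leq t + g(\gamma_0(t))$ for every $t \in [t_0, t_0 + \tau(t_0, \gamma_0)]$ with $\gamma_0(t) \in \Gamma$, so passing through $\Gamma$ early would only increase the accrued cost and cannot help. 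Using Remark~\ref{remk-tau} again, $t_0 + \tau(t_0, \widetilde\beta) + G(t_0, \widetilde\beta) \leq t_1 + \tau(t_1, \beta) + g(\widetilde\beta(t_1 + \tau(t_1,\beta))) = t_1 + \tau(t_1, \beta) + G(t_1, \beta)$, whereas optimality of $\gamma_0$ together with the same Remark (equality case, since $\gamma_0$ does not reach $\Gamma$ before $t_0 + \tau(t_0,\gamma_0)$ in an optimal trajectory) gives $t_0 + \tau(t_0, \gamma_0) + G(t_0, \gamma_0) = t_1 + \tau(t_1, \gamma_0) + G(t_1, \gamma_0) = t_1 + \tau(t_1, \gamma_1) + G(t_1, \gamma_1)$. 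Chaining these with the assumed strict inequality for $\beta$ yields $\tau(t_0, \widetilde\beta) + G(t_0, \widetilde\beta) < \tau(t_0, \gamma_0) + G(t_0, \gamma_0)$, contradicting $\gamma_0 \in \Opt(k, g, t_0, x_0)$.

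Finally I would tidy up the endpoints: the curve $\widetilde\beta$ just constructed need not be constant on $[0, t_0]$ (it equals $\gamma_0$, which is), nor on $[t_0 + \tau, +\infty)$, but this is irrelevant for the cost comparison by Remark~\ref{RemkOptimalTrajectoriesRemainStoppedAfterFinalTime}; only the infimum in \eqref{value function} is needed for the contradiction, and that infimum ignores behavior outside $[t_0, t_0+\tau(t_0,\cdot)]$. Hence no $\beta$ can beat $\gamma_1$, so $\gamma_1$ attains the minimum in \eqref{EqMinimalTime} for $(k, g, t_1, x_1)$, and together with the structural properties checked above this shows $\gamma_1 \in \Opt(k, g, t_1, x_1)$. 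The main obstacle, and the point deserving the most care in writing, is the bookkeeping around which exit time is used: one must use Remark~\ref{remk-tau} and Lemma~\ref{LemmTimePlusGIncreases} to be sure that splitting the trajectory at the intermediate time $t_1$ — which could in principle lie after a spurious contact with $\Gamma$ — does not change the relevant cost, and that the "equality" case of Remark~\ref{remk-tau} genuinely applies because an optimal $\gamma_0$ cannot have reached $\Gamma$ strictly before its recorded exit time.
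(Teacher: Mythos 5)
Your overall strategy is the paper's: splice a competitor for the restricted problem onto the initial segment of $\gamma_0$ and compare costs using Remark~\ref{remk-tau} and Lemma~\ref{LemmTimePlusGIncreases} (you phrase it as a contradiction, the paper as a direct inequality, which is cosmetic). There is, however, one genuine gap: the case $t_1 > t_0 + \tau(t_0, \gamma_0)$ is not covered by your chain of (in)equalities. Your pivotal identity
\[
t_0 + \tau(t_0, \gamma_0) + G(t_0, \gamma_0) = t_1 + \tau(t_1, \gamma_1) + G(t_1, \gamma_1)
\]
invokes the equality case of Remark~\ref{remk-tau}, which requires $\gamma_0(t) \notin \Gamma$ for every $t \in [t_0, t_1)$; this fails exactly when $t_1$ lies after the exit time of $\gamma_0$, and there the left-hand side is \emph{strictly smaller} than the right-hand side (since $\tau(t_1,\gamma_1) = 0$ and $G(t_1,\gamma_1) = g(x_1)$), so the assumed better competitor $\beta$ no longer yields a contradiction with the optimality of $\gamma_0$. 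The paper handles this case separately and without reference to $\gamma_0$ at all: $x_1 \in \Gamma$, $\gamma_1$ is the constant curve with cost $g(x_1)$, and Lemma~\ref{LemmTimePlusGIncreases} applied to any admissible $\widetilde\gamma_1$ with $\widetilde\gamma_1(t_1) = x_1$ and finite exit time gives $g(x_1) \leq \tau(t_1, \widetilde\gamma_1) + G(t_1, \widetilde\gamma_1)$ directly. You allude to precisely this mechanism (``passing through $\Gamma$ early cannot help'') but never execute it, and your written argument does not reduce to it.

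A second, minor point: you attribute the inequality $t_1 + \tau(t_1, \gamma_0) \leq t_0 + \tau(t_0, \gamma_0)$ to Remark~\ref{remk-tau}, but that Remark states the \emph{reverse} inequality $t_0 + \tau(t_0, \gamma_0) \leq t_1 + \tau(t_1, \gamma_0)$. The inequality you want follows instead from the definition of $\tau$ as an infimum (when $t_1 \leq t_0 + \tau(t_0,\gamma_0)$, the value $t_0 + \tau(t_0,\gamma_0) - t_1$ is an admissible candidate), and combined with the Remark one obtains the equality you actually use. The rest of the argument --- admissibility of the spliced curve, the estimate $t_0 + \tau(t_0, \widetilde\beta) + G(t_0, \widetilde\beta) \leq t_1 + \tau(t_1, \beta) + G(t_1, \beta)$ via Remark~\ref{remk-tau} and Lemma~\ref{LemmTimePlusGIncreases}, and the observation that the competitor's behaviour outside $[t_0, t_0 + \tau(t_0, \cdot)]$ is irrelevant --- is correct and coincides with the paper's proof in the case $t_1 < t_0 + \tau(t_0, \gamma_0)$.
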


\begin{proof}
If $t_1 \geq \tau(t_0, \gamma_0)$, then $x_1 \in \Gamma$ and $\gamma_1$ is the constant trajectory remaining at $x_1$ at all times. In particular, $\tau(t_1, \gamma_1) = 0$, $G(t_1, \gamma_1) = g(x_1)$, and the optimality of $\gamma_1$ follows from the fact that, if $\widetilde\gamma_1 \in \Adm(k)$ is such that $\widetilde\gamma_1(t_1) = x_1$ and $\tau(t_1, \widetilde\gamma_1) < +\infty$, then by Lemma~\ref{LemmTimePlusGIncreases}, we have $g(x_1) \leq \tau(t_1, \widetilde\gamma_1) + G(t_1, \widetilde\gamma_1)$.

Let us now consider the case $t_1 < \tau(t_0, \gamma_0)$, in which case, by definition, we have $\gamma_0(t) \notin \Gamma$ for $t \in [t_0, t_1]$, and thus $\tau(t_1, \gamma_0) = \tau(t_1, \gamma_1)$, $t_1 + \tau(t_1, \gamma_1) = t_0 + \tau(t_0, \gamma_0)$, and $G(t_1, \gamma_1) = G(t_0, \gamma_0)$. Let $\widetilde\gamma_1 \in \Adm(k)$ be such that $\widetilde\gamma_1(t_1) = x_1$. Define $\widetilde\gamma_0\colon \mathbb R_+ \to X$ by $\widetilde\gamma_0(t) = \gamma_0(t)$ for $t \in [0, t_1]$ and $\widetilde\gamma_0(t) = \widetilde\gamma_1(t)$ for $t \geq t_1$. Then $\widetilde\gamma_0 \in \Adm(k)$ and $\widetilde\gamma_0(t_0) = x_0$, showing that $\tau(t_0, \gamma_0) + G(t_0, \gamma_0) \leq \tau(t_0, \widetilde\gamma_0) + G(t_0, \widetilde\gamma_0)$. By definition of $\widetilde\gamma_0$, we have $\tau(t_1, \widetilde\gamma_0) = \tau(t_1, \widetilde\gamma_1)$ and $\widetilde\gamma_0(t) \notin \Gamma$ for every $t \in [0, t_1]$, and thus $t_1 + \tau(t_1, \widetilde\gamma_1) = t_0 + \tau(t_0, \widetilde\gamma_0)$ and $G(t_1, \widetilde\gamma_1) = G(t_0, \widetilde\gamma_0)$. We thus deduce that
\begin{align*}
\tau(t_1, \gamma_1) + G(t_1, \gamma_1) & = t_0 - t_1 + \tau(t_0, \gamma_0) + G(t_0, \gamma_0) \\
& \leq t_0 - t_1 + \tau(t_0, \widetilde\gamma_0) + G(t_0, \widetilde\gamma_0) \\
& = \tau(t_1, \widetilde\gamma_1) + G(t_1, \widetilde\gamma_1),
\end{align*}
proving that $\gamma_1 \in \Opt(k, g, t_1, x_1)$, as required.
\end{proof}

Our next result provides the dynamic programming principle for $\OCP(X, \Gamma, k, g)$.

\begin{proposition}\label{prop-DPP}
Consider the optimal control problem $\OCP(X, \Gamma, k, g)$ and assume that \ref{Hypo-X-SigmaCompact}--\ref{HypoOCP-k-Bound} and \ref{HypoOCP-g-compatible} are satisfied. Then, for every $(t_0, x_0) \in \mathbb R_+ \times X$ and $\gamma \in \Adm(k)$ with $\gamma(t_0) = x_0$, we have
\begin{equation}
\label{eq-DPP}
\varphi(t_0 + h, \gamma(t_0 + h)) + h \geq \varphi(t_0, x_0), \qquad \text{ for every } h \geq 0,
\end{equation}
with equality for every $h \in [0, \tau(t_0, \gamma)]$ if $\gamma \in \Opt(k, g, t_0, x_0)$. Moreover, if $\gamma$ is constant on $[0, t_0]$ and on $[t_0 + \tau(t_0, \gamma), +\infty)$ and if equality holds in \eqref{eq-DPP} for every $h \in [0, \tau(t_0, \gamma)]$ with $h < +\infty$, then $\tau(t_0, \gamma) < +\infty$ and $\gamma \in \Opt(k, g, t_0, x_0)$.
\end{proposition}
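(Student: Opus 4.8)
The plan is to establish the three assertions in succession, the common device being concatenation of admissible curves together with careful bookkeeping of the first-exit time $\tau$ and the final cost $G$ under such gluing. For the inequality \eqref{eq-DPP}, I would fix $(t_0, x_0) \in \mathbb R_+ \times X$, $\gamma \in \Adm(k)$ with $\gamma(t_0) = x_0$, and $h \geq 0$, set $t_1 = t_0 + h$ and $x_1 = \gamma(t_1)$, and --- since the case $\varphi(t_1, x_1) = +\infty$ is trivial --- take an arbitrary $\beta \in \Adm(k)$ with $\beta(t_1) = x_1$ and $\tau(t_1, \beta) < +\infty$, letting $\widetilde\gamma$ coincide with $\gamma$ on $[0, t_1]$ and with $\beta$ on $[t_1, +\infty)$. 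Then $\widetilde\gamma \in \Adm(k)$ (the metric-derivative bound holds on each piece, and the pieces agree at $t_1$) and $\widetilde\gamma(t_0) = x_0$, so $\varphi(t_0, x_0) \leq \tau(t_0, \widetilde\gamma) + G(t_0, \widetilde\gamma)$, and it remains to prove $\tau(t_0, \widetilde\gamma) + G(t_0, \widetilde\gamma) \leq h + \tau(t_1, \beta) + G(t_1, \beta)$, after which taking the infimum over $\beta$ yields \eqref{eq-DPP}. I would split this into two cases. If $\tau(t_0, \widetilde\gamma) \geq h$, then $\widetilde\gamma(t) \notin \Gamma$ for $t \in [t_0, t_1)$, so Remark~\ref{remk-tau} gives $t_0 + \tau(t_0, \widetilde\gamma) = t_1 + \tau(t_1, \widetilde\gamma)$, which, using $\widetilde\gamma = \beta$ on $[t_1, +\infty)$, reads $\tau(t_0, \widetilde\gamma) = h + \tau(t_1, \beta)$ and $G(t_0, \widetilde\gamma) = G(t_1, \beta)$, so the bound is an equality. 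If instead $\tau(t_0, \widetilde\gamma) =: \sigma < h$, then $\widetilde\gamma(t_0 + \sigma) \in \Gamma$ and $\widetilde\gamma(t_1 + \tau(t_1, \beta)) \in \Gamma$ with $t_0 + \sigma < t_1 \leq t_1 + \tau(t_1, \beta)$; since $\widetilde\gamma \in \Lip_{K_{\max}}(X)$ with $L_g K_{\max} < 1$ by \ref{HypoOCP-g-compatible}, Lemma~\ref{LemmTimePlusGIncreases} gives $\tau(t_0, \widetilde\gamma) + G(t_0, \widetilde\gamma) = \sigma + g(\widetilde\gamma(t_0 + \sigma)) < h + \tau(t_1, \beta) + G(t_1, \beta)$.

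For the equality when $\gamma \in \Opt(k, g, t_0, x_0)$ and $h \in [0, \tau(t_0, \gamma)]$, the inequality ``$\geq$'' is the case just treated; for ``$\leq$'', I would set $t_1 = t_0 + h$, $x_1 = \gamma(t_1)$ and invoke Proposition~\ref{PropRestrictionIsOptimal}, according to which freezing $\gamma$ on $[0, t_1]$ produces a curve $\gamma_1 \in \Opt(k, g, t_1, x_1)$, so that $\varphi(t_1, x_1) = \tau(t_1, \gamma_1) + G(t_1, \gamma_1)$. Because $h \leq \tau(t_0, \gamma)$, the curve $\gamma$ does not meet $\Gamma$ on $[t_0, t_1)$, so Remark~\ref{remk-tau} (note that $\gamma_1 = \gamma$ on $[t_1, +\infty)$) gives $\tau(t_1, \gamma_1) = \tau(t_0, \gamma) - h$ and $G(t_1, \gamma_1) = G(t_0, \gamma)$, whence $\varphi(t_1, x_1) + h = \tau(t_0, \gamma) + G(t_0, \gamma) = \varphi(t_0, x_0)$.

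For the converse characterization, suppose $\gamma$ is constant on $[0, t_0]$ and on $[t_0 + \tau(t_0, \gamma), +\infty)$ and that equality holds in \eqref{eq-DPP} for every $h \in [0, \tau(t_0, \gamma)]$ with $h < +\infty$. I would first rule out $\tau(t_0, \gamma) = +\infty$: in that case the equality holds for every $h \geq 0$, and since $\varphi \geq 0$ this forces $h \leq \varphi(t_0, x_0)$ for all $h \geq 0$, contradicting the finiteness of $\varphi(t_0, x_0)$ provided by Proposition~\ref{prop-T-psi}. Once $\tau(t_0, \gamma) < +\infty$, every requirement in Definition~\ref{DefiOCP}\ref{DefiOCPOptimalTraj} is immediate except the minimality \eqref{EqMinimalTime}, which I would derive by applying the assumed equality with $h = \tau(t_0, \gamma)$ together with the elementary identity $\varphi(s, y) = g(y)$ valid for every $s \geq 0$ and $y \in \Gamma$ (the first-exit time from a point of $\Gamma$ being zero): since $\gamma(t_0 + \tau(t_0, \gamma)) \in \Gamma$, this gives $\varphi(t_0, x_0) = \tau(t_0, \gamma) + g(\gamma(t_0 + \tau(t_0, \gamma))) = \tau(t_0, \gamma) + G(t_0, \gamma)$, which is \eqref{EqMinimalTime}.

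I expect the only delicate point to be the bookkeeping of $\tau$ and $G$ under concatenation in the first part, specifically the subcase in which the glued curve $\widetilde\gamma$ reaches $\Gamma$ strictly before $t_1$: this is exactly where the compatibility assumption \ref{HypoOCP-g-compatible} --- through the monotonicity of $t \mapsto t + g(\gamma(t))$ at $\Gamma$-values furnished by Lemma~\ref{LemmTimePlusGIncreases} --- is indispensable, the sought inequality being liable to fail without it.
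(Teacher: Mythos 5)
Your proof is correct and follows essentially the same route as the paper's: concatenation of admissible curves at time $t_0+h$, the exit-time comparison of Remark~\ref{remk-tau}, and the monotonicity from Lemma~\ref{LemmTimePlusGIncreases} (via \ref{HypoOCP-g-compatible}) to control the case where the glued curve hits $\Gamma$ early, with the converse handled identically. The only cosmetic differences are that you take an infimum over arbitrary admissible continuations $\beta$ where the paper concatenates directly with an optimal one from Proposition~\ref{PropExistOpt}, and that you make the appeal to Proposition~\ref{PropRestrictionIsOptimal} explicit in the equality case where the paper instead tracks where each inequality in its chain becomes an equality.
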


\begin{proof}
Let $(t_0, x_0) \in \mathbb R_+ \times X$ and $\gamma \in \Adm(k)$ with $\gamma(t_0) = x_0$. Take $h \geq 0$. Let $\widehat\gamma \in \Opt(k, g, t_0 + h, \gamma(t_0 + h))$ and define $\widetilde\gamma\colon \mathbb R_+ \to X$ by $\widetilde\gamma(t) = \gamma(t)$ for $t \in [0, t_0 + h]$ and $\widetilde\gamma(t) = \widehat\gamma(t)$ for $t \geq t_0 + h$. By definition of $\widetilde\gamma$ and $\varphi$, we have
\begin{align}
\varphi(t_0, x_0) & \leq \tau(t_0, \widetilde\gamma) + G(t_0, \widetilde\gamma), \label{eq-prf-DPP-1} \\
\varphi(t_0 + h, \gamma(t_0 + h)) & = \tau(t_0 + h, \widetilde\gamma) + G(t_0 + h, \widetilde\gamma). \notag
\intertext{In addition, by Remark~\ref{remk-tau}, we have}
t_0 + \tau(t_0, \widetilde\gamma) & \leq t_0 + h + \tau(t_0 + h, \widetilde\gamma), \label{eq-prf-DPP-2}
\intertext{and thus, by Lemma~\ref{LemmTimePlusGIncreases},}
t_0 + \tau(t_0, \widetilde\gamma) + G(t_0, \widetilde\gamma) & \leq t_0 + h + \tau(t_0 + h, \widetilde\gamma) + G(t_0 + h, \widetilde\gamma). \label{eq-prf-DPP-3}
\end{align}
Combining the above inequalities, we obtain \eqref{eq-DPP}.

In addition, if $\gamma \in \Opt(k, g, t_0, x_0)$, we have equality in \eqref{eq-prf-DPP-1} and, if $h \in [0, \tau(t_0, \gamma)]$, then we have equality in \eqref{eq-prf-DPP-2} thanks to Remark~\ref{remk-tau}, implying that we have equality in \eqref{eq-prf-DPP-3} since we apply Lemma~\ref{LemmTimePlusGIncreases} with equal times, and thus we also have equality in \eqref{eq-DPP}.

Assume now that $\gamma$ is constant on $[0, t_0]$ and on $[t_0 + \tau(t_0, \gamma), +\infty)$ and that equality holds in \eqref{eq-DPP} for every $h \in [0, \tau(t_0, \gamma)]$ with $h < +\infty$. In particular, since $\varphi$ takes nonnegative and finite values, for every such $h$, we have $h = \varphi(t_0, x_0) - \varphi(t_0 + h, \gamma(t_0 + h)) \leq \varphi(t_0, x_0)$, proving that $[0, \tau(t_0, \gamma)] \subset [0, \varphi(t_0, x_0)]$ and thus $\tau(t_0, \gamma) < +\infty$. For $h = \tau(t_0, \gamma)$, we have $\gamma(t_0 + h) \in \Gamma$, thus $\varphi(t_0 + h, \gamma(t_0 + h)) = g(\gamma(t_0 + h))$ and 
hence \eqref{eq-DPP} reads
\[
\varphi(t_0, x_0) = \tau(t_0, \gamma) + g(\gamma(t_0 + h)) = \tau(t_0, \gamma) + G(t_0, \gamma),
\]
yielding the conclusion.
\end{proof}

Our next result deals with Lipschitz continuity of the value function $\varphi$. This kind of result is classical for optimal control problems with free final time (see, e.g., \cite[Proposition~8.2.5]{Cannarsa2004Semiconcave} for a proof in the autonomous case), and a complete proof for the nonautonomous optimal control problem $\OCP(X, \Gamma, k, g)$ was given in \cite[Propositions~4.2 and 4.3]{Mazanti2019Minimal} in the case where $X$ is compact. However, that reference uses the stronger assumption that $k \in \Lip(\mathbb R_+ \times X, \mathbb R_+)$. When the optimal control problem does not have state constraints, this assumption can be relaxed to \ref{HypoOCP-k-Lip} by first showing Lipschitz continuity of $\varphi$ with respect to $x$, which can be done by adapting the classical proof of \cite[Proposition~8.2.5]{Cannarsa2004Semiconcave}, and then using the dynamic programming principle to deduce Lipschitz continuity also with respect to $t$. This strategy was described in \cite[Proposition~3.8]{Dweik2020Sharp} and carried out in details in \cite[Lemma~4.6 and Proposition~4.7]{Sadeghi2022Multi} when $X = \mathbb R^d$, however those proofs rely on the absence of state constraints and cannot be easily generalized to optimal control problems with state constraints. For that reason, we present here another proof, used in \cite[Lemma~3.2 and Proposition~3.3]{Sadeghi2022Nonsmooth} in the case where $X$ is the closure of a nonempty bounded open subset of $\mathbb R^d$. This proof is inspired by that of \cite[Propositions~4.2 and 4.3]{Mazanti2019Minimal} but uses a technique introduced in the proof of \cite[Proposition~3.9]{Dweik2020Sharp} in order to replace the assumption $k \in \Lip(\mathbb R_+ \times X, \mathbb R_+)$ by the weaker assumption \ref{HypoOCP-k-Lip}. As a first step, we prove Lipschitz continuity of $\varphi$ in space for fixed time.

\begin{lemma}
\label{lemm-varphi-Lipschitz}
Consider the optimal control problem $\OCP(X, \Gamma, k, g)$ and assume that \ref{Hypo-X-SigmaCompact}--\ref{HypoOCP-g-compatible} are satisfied. Then, for every $R > 0$, there exists $C > 0$ depending only on $\mathbf 0$, $\Gamma$, $g$, $D$, $K_{\min}$, $K_{\max}$, and $R$ such that, for every $t_0 \in \mathbb R_+$ and $x_0,\, x_1 \in \overline B_X(\mathbf 0, R)$, we have
\begin{equation}
\label{eq-varphi-Lipschitz-in-x}
\abs{\varphi(t_0, x_0) - \varphi(t_0, x_1)} \leq C \dist(x_0, x_1).
\end{equation}
\end{lemma}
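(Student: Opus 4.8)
The plan is to prove the Lipschitz estimate \eqref{eq-varphi-Lipschitz-in-x} by a standard comparison argument: given $x_0, x_1 \in \overline B_X(\mathbf 0, R)$, pick a near-optimal trajectory starting at $x_0$ and build a competitor starting at $x_1$ that mimics it, paying only the cost of the detour to connect $x_1$ to $x_0$ (or rather to a point on the optimal trajectory). By symmetry it suffices to bound $\varphi(t_0, x_1) - \varphi(t_0, x_0)$ from above by $C \dist(x_0, x_1)$. First I would fix $t_0 \in \mathbb R_+$ and let $\gamma_0 \in \Opt(k, g, t_0, x_0)$, which exists by Proposition~\ref{PropExistOpt}. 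By Proposition~\ref{prop-T-psi}, the whole trajectory $\gamma_0$ stays in the compact ball $\overline B_X(\mathbf 0, \psi(R))$, and $\tau(t_0, \gamma_0) \le \varphi(t_0, x_0) \le T(R)$; let $R' = \psi(R)$ and let $L_{R'}$ be the Lipschitz constant from \ref{HypoOCP-k-Lip} on $\overline B_X(\mathbf 0, R')$.

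The core construction: using \ref{Hypo-X-dist}, join $x_1$ to $x_0$ by a unit-speed curve of length $T^\ast \le D\,\dist(x_0, x_1)$; traversing it at speed at least $K_{\min}$ takes time at most $\frac{D}{K_{\min}}\dist(x_0, x_1) =: \delta$. However, one must check this curve is $k$-admissible — its metric derivative should be $\le k$. Here is where the technique borrowed from \cite[Proposition~3.9]{Dweik2020Sharp} enters: rather than travelling at constant speed $K_{\min}$, one travels along the connecting curve at speed exactly $k(t, \cdot)$ evaluated along it (or a suitable truncation bounded below by $K_{\min}$), which is admissible by construction and still reaches $x_0$ in time at most $\delta$. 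Then the competitor $\gamma_1$ follows this connecting curve on $[t_0, t_0 + s_1]$ (with $s_1 \le \delta$), arriving at $x_0$, and thereafter copies $\gamma_0$ shifted by $s_1$, i.e. $\gamma_1(t) = \gamma_0(t - s_1)$ for $t \ge t_0 + s_1$. One subtlety: since $k$ is time-dependent, $\gamma_0(\cdot - s_1)$ need not be $k$-admissible. This is precisely the obstacle the Dweik technique addresses — one instead reparametrizes, defining $\gamma_1$ after the connection so that its metric derivative matches $k$ along it, using that $\gamma_0$ moves at speed $\le K_{\max}$ and $k \ge K_{\min}$, so the reparametrized copy arrives at $\Gamma$ having accumulated an extra time of at most $O(\dist(x_0,x_1))$, with a constant depending on $K_{\min}, K_{\max}, L_{R'}, T(R)$.

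With such a $\gamma_1 \in \Adm(k)$, $\gamma_1(t_0) = x_1$, one estimates $\tau(t_0, \gamma_1) + G(t_0, \gamma_1)$. The arrival position of $\gamma_1$ on $\Gamma$ is the arrival position $y_0 = \gamma_0(t_0 + \tau(t_0,\gamma_0))$ of $\gamma_0$, so $G(t_0, \gamma_1) = g(y_0) = G(t_0, \gamma_0)$; and $\tau(t_0, \gamma_1) \le \tau(t_0, \gamma_0) + C_1 \dist(x_0, x_1)$ for a constant $C_1$ coming from the detour time plus the reparametrization overshoot. Hence
\[
\varphi(t_0, x_1) \le \tau(t_0, \gamma_1) + G(t_0, \gamma_1) \le \tau(t_0, \gamma_0) + G(t_0, \gamma_0) + C_1 \dist(x_0, x_1) = \varphi(t_0, x_0) + C_1 \dist(x_0, x_1).
\]
Swapping the roles of $x_0$ and $x_1$ gives the symmetric bound and hence \eqref{eq-varphi-Lipschitz-in-x} with $C = C_1$; one checks $C_1$ depends only on $\mathbf 0, \Gamma, g, D, K_{\min}, K_{\max}, R$ as claimed (via $\psi, T$ and $L_{\psi(R)}$). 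The main obstacle, as indicated, is the time-dependence of $k$: the naive "shift the optimal trajectory" competitor fails admissibility, and one must carefully reparametrize the tail of the competitor so its speed respects the constraint $k$, controlling the resulting loss in exit time — this is exactly the role of the argument from \cite[Proposition~3.9]{Dweik2020Sharp} and \cite[Lemma~3.2]{Sadeghi2022Nonsmooth}, adapted here to the metric-space setting with state constraints, the latter being automatically respected since the competitor stays within curves valued in $X$.
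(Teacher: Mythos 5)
Your proposal follows essentially the same route as the paper's proof: connect $x_1$ to $x_0$ via the curve from \ref{Hypo-X-dist} traversed at speed $K_{\min}$, then follow a time-reparametrized copy of the optimal trajectory $\gamma_0$ (the reparametrization $\phi$ solving $\dot\phi(t) = k(t,\gamma_0(\phi(t)))/k(\phi(t),\gamma_0(\phi(t)))$), and control the accumulated time loss by a Grönwall estimate using $L_{\psi(R)}$ and the bound $\tau(t_0,\gamma_0)\le T(R)$ — exactly the Dweik technique you cite. One small gap: you assert that the competitor $\gamma_1$ exits $\Gamma$ at the same point $y_0$ as $\gamma_0$, so that $G(t_0,\gamma_1)=g(y_0)$; in fact $\gamma_1$ may cross $\Gamma$ earlier (during the connecting segment or along the reparametrized tail), in which case $\tau(t_0,\gamma_1)$ and $G(t_0,\gamma_1)$ refer to that earlier crossing. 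The estimate survives because Lemma~\ref{LemmTimePlusGIncreases} (which is where \ref{HypoOCP-g-compatible}, i.e.\ $L_g K_{\max}<1$, enters) gives $t_0+\tau(t_0,\gamma_1)+G(t_0,\gamma_1)\le t_1^\ast+g(y_0)$ for the two times at which $\gamma_1$ lies in $\Gamma$; you should invoke it explicitly to close this step, as the paper does.
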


\begin{proof}
It suffices to show that, for every $R > 0$, there exists $C > 0$ such that, for every $t_0 \in \mathbb R_+$ and $x_0, x_1 \in \overline B_X(\mathbf 0, R)$, we have
\begin{equation}
\label{eq-varphi-one-side-Lipschitz}
\varphi(t_0, x_1) - \varphi(t_0, x_0) \leq C \dist(x_0, x_1),
\end{equation}
since, in this case, \eqref{eq-varphi-Lipschitz-in-x} can be deduced by exchanging the role of $x_0$ and $x_1$.

Let $R > 0$, $t_0 \in \mathbb R_+$, and $x_0, x_1 \in \overline B_X(\mathbf 0, R)$. Let $\gamma_0 \in \Opt(k, g, t_0, x_0)$ and $t_0^\ast = t_0 + \tau(t_0, \gamma)$ be the time at which $\gamma_0$ arrives at the target set $\Gamma$, and set $x_0^\ast = \gamma_0(t_0^\ast) \in \Gamma$. Since $\gamma_0 \in \Opt(k, g, t_0, x_0)$, we have $\varphi(t_0, x_0) = t_0^\ast - t_0 + g(x_0^\ast)$.

Let $D > 0$ be the constant from \ref{Hypo-X-dist}, $T$ and $\psi$ be the functions from Proposition~\ref{prop-T-psi}, and $L_{\psi(R)}$ be the constant obtained from \ref{HypoOCP-k-Lip} applied to $\psi(R)$, which we denote simply by $L$ in the sequel. Note that, by Proposition~\ref{prop-T-psi}, we have $\gamma_0(t) \in \overline B_X(\mathbf 0, \psi(R))$ for every $t \geq 0$.

Applying \ref{Hypo-X-dist} to $x_0, x_1$ and renormalizing the speed of the curve whose existence is asserted in that hypothesis, we obtain the existence of $t_1 \geq t_0$ and a Lipschitz continuous curve $\sigma\colon [t_0, t_1] \to X$ such that $\sigma(t_0) = x_1$, $\sigma(t_1) = x_0$, $\abs{\dot\sigma}(t) = K_{\min}$ for almost every $t \in [t_0, t_1]$, and $t_1 - t_0 \leq \frac{D \dist(x_0, x_1)}{K_{\min}}$.

Let us now define $\phi\colon [t_1, +\infty) \to [t_0, +\infty)$ as a solution of the problem
\begin{equation}
\label{eq-EDO-phi}
\left\{
\begin{aligned}
\dot\phi(t) & = \frac{k(t, \gamma_0(\phi(t)))}{k(\phi(t), \gamma_0(\phi(t)))} & \quad & \text{ for } t \geq t_1, \\
\phi(t_1) & = t_0.
\end{aligned}
\right.
\end{equation}
Note that, since $(t, s) \mapsto \frac{k(t, \gamma_0(s))}{k(s, \gamma_0(s))}$ is continuous (but not necessarily Lipschitz continuous in its second argument), a solution $\phi$ to the above problem exists and is of class $\mathcal C^1$ (but it may not be unique). Moreover, $\dot\phi(t) \in \left[\frac{K_{\min}}{K_{\max}}, \frac{K_{\max}}{K_{\min}}\right]$ for every $t \in [t_1, +\infty)$, which implies that $\phi\colon [t_1, +\infty) \to [t_0, +\infty)$ is increasing and surjective, and hence invertible, and both $\phi$ and $\phi^{-1}$ are Lispchitz continuous, with Lipschitz constant $\frac{K_{\max}}{K_{\min}}$. We define $\sigma_1\colon [t_1, +\infty) \to X$ by $\sigma_1(t) = \gamma_0(\phi(t))$, which, by construction, satisfies $\sigma_1(t_1) = x_0$, $\sigma_1 \in \Lip(X)$, and $\abs{\dot\sigma_1}(t) = \abs{\dot\gamma_0}(\phi(t)) \dot\phi(t) \leq k(t, \sigma_1(t))$ for a.e.\ $t \in [t_1, +\infty)$. We define $t_1^\ast = \phi^{-1}(t_0^\ast)$ and remark that $\sigma_1(t_1^\ast) = \gamma_0(t_0^\ast) = x_0^\ast \in \Gamma$.

Finally, we define $\gamma_1 \in \Lip(X)$ by $\gamma_1(t) = x_1$ for $t \in [0, t_0]$, $\gamma_1(t) = \sigma(t)$ for $t \in [t_0, t_1]$, and $\gamma_1(t) = \sigma_1(t)$ for $t \in [t_1, +\infty)$. By construction, we have $\gamma_1 \in \Adm(k)$ and $\tau(t_0, \gamma_1) \leq t_1^\ast - t_0$.

Note that, for every $t \in [t_1, +\infty)$, we have $k(\phi(t), \gamma_0(\phi(t))) \dot\phi(t) = k(t, \gamma_0(\phi(t)))$ and thus, by integrating this identity and performing a change of variables, we deduce that, for every $t \geq t_1$,
\begin{equation}
\label{eq-interesting-identitiy}
\int_{t_0}^{\phi(t)} k(s, \gamma_0(s)) \diff s = \int_{t_1}^t k(s, \gamma_0(\phi(s))) \diff s.
\end{equation}
Let $H\colon [t_0, +\infty) \to [0, +\infty)$ be defined for $\theta \geq t_0$ by
\[
H(\theta) = \int_{t_0}^\theta k(s, \gamma_0(s)) \diff s.
\]
Then $H$ is differentiable, with $\dot H(\theta) = k(\theta, \gamma_0(\theta)) \in [K_{\min}, K_{\max}]$ for every $\theta \geq t_0$. In particular, $H$ is $K_{\max}$-Lipschitz continuous, invertible, and its inverse is $\frac{1}{K_{\min}}$-Lipschitz continuous. Moreover, using \eqref{eq-interesting-identitiy}, we have, for every $t \geq t_1$, that
\[
H(\phi(t)) = \int_{t_1}^t k(s, \gamma_0(\phi(s))) \diff s, \qquad H(t) = \int_{t_0}^t k(s, \gamma_0(s)) \diff s.
\]
Hence, for every $t \geq t_1$, we have
\begin{equation}
\label{eq-estim-phit-minus-t}
\begin{aligned}
\abs{\phi(t) - t} & = \abs*{H^{-1}\left(\int_{t_1}^t k(s, \gamma_0(\phi(s))) \diff s\right) - H^{-1}\left(\int_{t_0}^t k(s, \gamma_0(s)) \diff s\right)} \\
& \leq \frac{1}{K_{\min}} \abs*{\int_{t_1}^t k(s, \gamma_0(\phi(s))) \diff s - \int_{t_0}^t k(s, \gamma_0(s)) \diff s} \\
& \leq \frac{1}{K_{\min}} \int_{t_1}^t \abs*{k(s, \gamma_0(\phi(s))) - k(s, \gamma_0(s))}\diff s + \frac{1}{K_{\min}}\int_{t_0}^{t_1} k(s, \gamma_0(s)) \diff s \\
& \leq \frac{L K_{\max}}{K_{\min}} \int_{t_1}^t \abs*{\phi(s) - s} \diff s + \frac{K_{\max}}{K_{\min}} (t_1 - t_0).
\end{aligned}
\end{equation}
Thus, by Grönwall's inequality, we have, for every $t \geq t_1$,
\[
\abs{\phi(t) - t} \leq (t_1 - t_0) \frac{K_{\max}}{K_{\min}} e^{\frac{L K_{\max}}{K_{\min}} (t - t_1)},
\]
which yields, for $t = t_1^\ast$, that
\[
\abs*{t_1^\ast - t_0^\ast} \leq (t_1 - t_0) \frac{K_{\max}}{K_{\min}} e^{\frac{L K_{\max}}{K_{\min}} (t_1^\ast - t_1)}.
\]
Note that $0 \leq t_1^\ast - t_1 = \phi^{-1}(t_0^\ast) - \phi^{-1}(t_0) \leq \frac{K_{\max}}{K_{\min}} (t_0^\ast - t_0) \leq \frac{K_{\max}}{K_{\min}} \varphi(t_0, x_0) \leq \frac{T(R) K_{\max}}{K_{\min}}$. Recalling that $0 \leq t_1 - t_0 \leq \frac{D \dist(x_1, x_0)}{K_{\min}}$, we deduce that
\[
t_1^\ast - t_0^\ast \leq C \dist(x_0, x_1),
\]
with $C = \frac{D K_{\max}}{K_{\min}^2} \exp\left(\frac{T(R) L K_{\max}^2}{K_{\min}^2}\right)$. This implies \eqref{eq-varphi-one-side-Lipschitz} since, using Lemma~\ref{LemmTimePlusGIncreases}, we have $\varphi(t_0, x_1) \leq \tau(t_0, \gamma_1) + G(t_0, \gamma_1) \leq t_1^\ast - t_0 + g(x_0^\ast)$ and $\varphi(t_0, x_0) = t_0^\ast - t_0 + g(x_0^\ast)$.
\end{proof}

Now, exactly as in \cite[Proposition~4.7]{Sadeghi2022Multi} and \cite[Proposition~3.3]{Sadeghi2022Nonsmooth}, we can deduce Lipschitz continuity of $\varphi$ by using Lemma~\ref{lemm-varphi-Lipschitz} and the dynamic programming principle from Proposition~\ref{prop-DPP}. The proof of our next result, omitted here, can be obtained through a straightforward adaptation of \cite[Proposition~3.3]{Sadeghi2022Nonsmooth} to our more general setting.

\begin{proposition}
\label{prop-varphi-Lipschitz}
Consider the optimal control problem $\OCP(X, \Gamma, k, g)$ and assume that \ref{Hypo-X-SigmaCompact}--\ref{HypoOCP-g-compatible} are satisfied. Then, for every $R > 0$, there exists $M > 0$ depending only on $\mathbf 0$, $\Gamma$, $g$, $D$, $K_{\min}$, $K_{\max}$, and $R$ such that, for every $(t_0, x_0), (t_1, x_1) \in \mathbb R_+ \times \overline B_X(\mathbf 0, R)$, we have
\[
\abs{\varphi(t_0, x_0) - \varphi(t_1, x_1)} \leq M \left(\abs{t_0 - t_1} + \dist(x_0, x_1)\right).
\] 
\end{proposition}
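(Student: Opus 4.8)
The plan is to combine the space-Lipschitz estimate already established in Lemma~\ref{lemm-varphi-Lipschitz} with the dynamic programming principle of Proposition~\ref{prop-DPP} in order to obtain Lipschitz continuity of $\varphi$ in the time variable, and then to conclude by the triangle inequality, following the strategy of \cite[Proposition~3.3]{Sadeghi2022Nonsmooth}. Concretely, fixing $R > 0$ and $(t_0, x_0), (t_1, x_1) \in \mathbb R_+ \times \overline B_X(\mathbf 0, R)$ with, without loss of generality, $t_0 \leq t_1$, I would write
\[
\abs{\varphi(t_0, x_0) - \varphi(t_1, x_1)} \leq \abs{\varphi(t_0, x_0) - \varphi(t_1, x_0)} + \abs{\varphi(t_1, x_0) - \varphi(t_1, x_1)}
\]
and bound the second term by $C_R \dist(x_0, x_1)$ directly from Lemma~\ref{lemm-varphi-Lipschitz}, where $C_R$ is the constant that lemma provides for the ball of radius $R$. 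It thus remains to estimate $\abs{\varphi(t_0, x_0) - \varphi(t_1, x_0)}$ by a constant times $t_1 - t_0$. One of the two inequalities is immediate: applying Proposition~\ref{prop-DPP} to the constant trajectory $\gamma \equiv x_0$, which belongs to $\Adm(k)$, with $h = t_1 - t_0$, yields $\varphi(t_1, x_0) + (t_1 - t_0) \geq \varphi(t_0, x_0)$, hence $\varphi(t_0, x_0) - \varphi(t_1, x_0) \leq t_1 - t_0$.

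For the reverse inequality, I would pick an optimal trajectory $\gamma_0 \in \Opt(k, g, t_0, x_0)$, which exists by Proposition~\ref{PropExistOpt}, and set $h' = \min(t_1 - t_0, \tau(t_0, \gamma_0)) \in [0, \tau(t_0, \gamma_0)]$, so that the equality case in Proposition~\ref{prop-DPP} gives $\varphi(t_0 + h', \gamma_0(t_0 + h')) = \varphi(t_0, x_0) - h'$. Writing $z = \gamma_0(t_0 + h')$, one checks that $z = \gamma_0(t_1)$: this is clear when $h' = t_1 - t_0$, and when $h' = \tau(t_0, \gamma_0) < t_1 - t_0$ it follows from the fact that an optimal trajectory is constant on $[t_0 + \tau(t_0, \gamma_0), +\infty)$ (Definition~\ref{DefiOCP}\ref{DefiOCPOptimalTraj}); in this second case $z \in \Gamma$, and Lemma~\ref{LemmTimePlusGIncreases} shows that $\varphi(s, z) = g(z)$ for every time $s$, so that $\varphi(t_1, z) = g(z) = \varphi(t_0 + h', z)$. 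In either case we obtain $\varphi(t_1, z) = \varphi(t_0, x_0) - h'$. Since $\gamma_0 \in \Adm(k) \subset \Lip_{K_{\max}}(X)$, we have $\dist(x_0, z) = \dist(\gamma_0(t_0), \gamma_0(t_0 + h')) \leq K_{\max} h'$, and by Proposition~\ref{prop-T-psi} both $x_0$ and $z$ lie in $\overline B_X(\mathbf 0, \psi(R))$. Applying Lemma~\ref{lemm-varphi-Lipschitz} on that ball, with constant $C = C(\psi(R))$, gives
\[
\varphi(t_1, x_0) \leq \varphi(t_1, z) + C \dist(x_0, z) \leq \varphi(t_0, x_0) - h' + C K_{\max} h' \leq \varphi(t_0, x_0) + C K_{\max} (t_1 - t_0),
\]
where the last step uses $-h' \leq 0$, $C K_{\max} \geq 0$, and $h' \leq t_1 - t_0$. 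Combining with the inequality of the previous paragraph yields $\abs{\varphi(t_0, x_0) - \varphi(t_1, x_0)} \leq \max(1, C K_{\max})(t_1 - t_0)$, and then $M = \max(1, C K_{\max}, C_R)$ works; since $C$, $C_R$, and $\psi$ depend only on $\mathbf 0$, $\Gamma$, $g$, $D$, $K_{\min}$, $K_{\max}$, and $R$, so does $M$.

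The argument is essentially routine once Lemma~\ref{lemm-varphi-Lipschitz} is in hand, so no genuine obstacle remains; the only points that require a bit of care are the bookkeeping of the sub-case $h' = \tau(t_0, \gamma_0) < t_1 - t_0$ — where one must use that $\varphi$ coincides with $g$ on $\Gamma$ regardless of time, which is exactly where Lemma~\ref{LemmTimePlusGIncreases} enters — and the verification, through Proposition~\ref{prop-T-psi}, that $\gamma_0(t_1)$ stays in a fixed ball $\overline B_X(\mathbf 0, \psi(R))$, which is what makes it legitimate to apply the space-Lipschitz estimate of Lemma~\ref{lemm-varphi-Lipschitz} to the pair $x_0$, $\gamma_0(t_1)$ with an $R$-dependent constant.
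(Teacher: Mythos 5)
Your proof is correct and follows exactly the route the paper indicates for this result (whose detailed proof it omits): combine the spatial Lipschitz estimate of Lemma~\ref{lemm-varphi-Lipschitz} with the dynamic programming principle of Proposition~\ref{prop-DPP} to control the time increment, using Proposition~\ref{prop-T-psi} to keep the optimal trajectory in a fixed ball $\overline B_X(\mathbf 0, \psi(R))$ where the spatial estimate applies. The only cosmetic remark is that in the sub-case $z \in \Gamma$ the identity $\varphi(s, z) = g(z)$ is immediate from $\tau(s, \beta) = 0$ for any admissible $\beta$ with $\beta(s) = z$, so Lemma~\ref{LemmTimePlusGIncreases} is not really needed there.
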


We conclude these preliminary results on $\OCP(X, \Gamma, k, g)$ with the following property of the value function $\varphi$. This property was first established in \cite[Proposition~4.4]{Mazanti2019Minimal} for compact $X$ and with $g = 0$, but using the stronger assumption that $k \in \Lip(\mathbb R_+ \times X, \mathbb R_+)$. We present here instead a proof based on the sharper result \cite[Proposition~3.9]{Dweik2020Sharp}, which uses only an assumption on the Lipschitz behavior of $k$ similar to the weaker assumption \ref{HypoOCP-k-Lip} but was shown in that reference only in the case where $X$ is the closure of a nonempty bounded open subset of $\mathbb R^d$ and without state constraints.

\begin{proposition}
\label{PropMonotoneOptimalTime}
Consider the optimal control problem $\OCP(X, \Gamma, k, g)$ and assume that \ref{Hypo-X-SigmaCompact}--\ref{HypoOCP-g-compatible} are satisfied. Then, for every $R > 0$, there exists $c > 0$ such that, for every $x \in \overline B_X(\mathbf 0, R)$ and $t_0, t_1 \in \mathbb R_+$ with $t_0 \neq t_1$, we have
\begin{equation}
\label{DtTauQGreaterThanMinusOne}
\frac{\varphi(t_0, x) - \varphi(t_1, x)}{t_0 - t_1} \geq c-1.
\end{equation}
\end{proposition}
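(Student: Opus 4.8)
The plan is to reduce \eqref{DtTauQGreaterThanMinusOne} to a one-sided estimate on $\varphi$ in the time variable and then to prove that estimate by evaluating $\varphi$ on a carefully built competitor. First, observe that, for fixed $R$, \eqref{DtTauQGreaterThanMinusOne} holds for all $x\in\overline B_X(\mathbf 0,R)$ and all $t_0\neq t_1$ if and only if there is $c\in(0,1]$ such that $\varphi(t_1,x)\le\varphi(t_0,x)+(1-c)(t_0-t_1)$ whenever $t_1<t_0$ and $x\in\overline B_X(\mathbf 0,R)$: the case $t_0>t_1$ of \eqref{DtTauQGreaterThanMinusOne} is exactly this inequality, and the case $t_0<t_1$ reduces to it after exchanging $t_0$ and $t_1$. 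If $x\in\Gamma$ then $\varphi(\cdot,x)\equiv g(x)$ and there is nothing to prove, so I assume $x\notin\Gamma$. Applying Proposition~\ref{prop-DPP} to the constant curve equal to $x$ already yields the above inequality with $c=0$; the whole content of the statement is to improve this to some $c>0$, and the mechanism that should make this possible is that, since $k\ge K_{\min}>0$, an agent starting at the earlier time $t_1$ cannot ``lose'' the whole interval $[t_1,t_0]$ but necessarily progresses towards $\Gamma$ during it, and this progress must shorten the remaining journey.

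To turn this into an estimate, fix $t_1<t_0$ and $x\notin\Gamma$, pick $\gamma\in\Opt(k,g,t_0,x)$ (Proposition~\ref{PropExistOpt}), and set $\tau_0=\tau(t_0,\gamma)$ and $x^\ast=\gamma(t_0+\tau_0)\in\Gamma$, so that $\varphi(t_0,x)=\tau_0+g(x^\ast)$, while Proposition~\ref{prop-T-psi} gives $\tau_0\le T(R)$ and $\gamma(t)\in\overline B_X(\mathbf 0,\psi(R))$ for all $t$. I would build $\widetilde\gamma\in\Adm(k)$ with $\widetilde\gamma(t_1)=x$ as follows: on $[t_1,t_0]$, let $\widetilde\gamma$ run along the trajectory $\gamma$ at the (admissible) speed $K_{\min}$, so that at time $t_0$ it is at the point $z=\gamma(u_1)$, where $u_1\in[t_0,t_0+\tau_0]$ is determined by the requirement that $\gamma$ covers arclength $K_{\min}(t_0-t_1)$ on $[t_0,u_1]$ (should this arclength exceed the length of $\gamma|_{[t_0,t_0+\tau_0]}$, then $\widetilde\gamma$ reaches $\Gamma$ inside $[t_1,t_0]$ and one concludes directly with Lemma~\ref{LemmTimePlusGIncreases} and the bounds of Proposition~\ref{prop-T-psi}); since $\abs{\dot\gamma}\le K_{\max}$, this forces $u_1-t_0\ge\tfrac{K_{\min}}{K_{\max}}(t_0-t_1)$. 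On $[t_0,+\infty)$, let $\widetilde\gamma=\gamma\circ\phi$, with $\phi$ solving $\phi(t_0)=u_1$ and $\dot\phi(t)=k(t,\gamma(\phi(t)))/k(\phi(t),\gamma(\phi(t)))$, exactly as in the proof of Lemma~\ref{lemm-varphi-Lipschitz}; this keeps $\widetilde\gamma$ admissible and makes it arrive at $x^\ast$ at time $t_1^\ast:=\phi^{-1}(t_0+\tau_0)$. Hence $\varphi(t_1,x)\le(t_1^\ast-t_1)+g(x^\ast)$, so that
\[
\varphi(t_1,x)-\varphi(t_0,x)\ \le\ (t_0-t_1)-(u_1-t_0)+\Bigl[(t_1^\ast-t_0)-\bigl((t_0+\tau_0)-u_1\bigr)\Bigr],
\]
where the first two terms already save at least $\tfrac{K_{\min}}{K_{\max}}(t_0-t_1)$ over the $c=0$ bound, and the bracket is the ``overhead'' produced by the reparametrization $\phi$ on the tail.

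The main obstacle is to show that this overhead does not destroy the saving, uniformly in $t_0,t_1$. This is the time-variable analogue of the central estimate of Lemma~\ref{lemm-varphi-Lipschitz} and of \cite[Proposition~3.9]{Dweik2020Sharp}: one controls $\phi$, and hence the bracket, through the Grönwall argument built on the ``interesting identity'' of that proof, using the space-Lipschitz bound \ref{HypoOCP-k-Lip} on $\overline B_X(\mathbf 0,\psi(R))$, the bound $\abs{\dot\gamma}\le K_{\max}$, the pinching $(\phi^{-1})'\in[\tfrac{K_{\min}}{K_{\max}},\tfrac{K_{\max}}{K_{\min}}]$, and the a priori bound $\tau_0\le T(R)$ which keeps the exponential factor in the Grönwall estimate bounded in terms of $R$ only. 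Since $k$ is assumed Lipschitz only in space and not in time, handling the reparametrization of a possibly long trajectory is exactly where the difficulty lies, and it is why the uniform bound on optimal exit times (Proposition~\ref{prop-T-psi}) and the Lipschitz continuity of $\varphi$ (Proposition~\ref{prop-varphi-Lipschitz}) enter. Conceptually, \eqref{DtTauQGreaterThanMinusOne} expresses a positive lower bound on $\abs{\nabla_x\varphi}$ away from $\Gamma$, which through the associated eikonal relation $\partial_t\varphi=k\abs{\nabla_x\varphi}-1$ gives $\partial_t\varphi\ge K_{\min}\abs{\nabla_x\varphi}-1\ge c-1$; the hypothesis $L_gK_{\max}<1$ (that is, \ref{HypoOCP-g-compatible}) is precisely what prevents $\varphi(t_0,\cdot)$ from flattening near $\Gamma$ and thus guarantees such a lower bound.
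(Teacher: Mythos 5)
Your proposal follows essentially the same route as the paper: take an optimal curve $\gamma$ for $(k,g,t_0,x)$, build a competitor for the earlier start time $t_1$ by reparametrizing $\gamma$ in time through the ODE $\dot\phi(t)=k(t,\gamma(\phi(t)))/k(\phi(t),\gamma(\phi(t)))$, and control the reparametrization by a Gr\"onwall estimate that uses \ref{HypoOCP-k-Lip} on $\overline B_X(\mathbf 0,\psi(R))$ together with the a priori bound $\tau_0\le T(R)$. Two remarks. First, the catch-up segment at speed $K_{\min}$ on $[t_1,t_0]$ is redundant: the paper simply imposes $\phi(t_1)=t_0$, and in your own decomposition the saving $(u_1-t_0)$ cancels exactly, since writing $\sigma(t)=\phi(t)-t$ one has $\text{bracket}=\sigma(t_0)-\sigma(t_1^\ast)=(u_1-t_0)-\sigma(t_1^\ast)$, so your bound collapses to $\varphi(t_1,x)-\varphi(t_0,x)\le(t_0-t_1)-\sigma(t_1^\ast)$; everything therefore hinges on a \emph{lower} bound $\sigma(t_1^\ast)\ge c\,(t_0-t_1)$. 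Second, and this is the one point your sketch glosses over: the Gr\"onwall argument of Lemma~\ref{lemm-varphi-Lipschitz} is run forward from the initial gap and yields an \emph{upper} bound on $\abs{\phi(t)-t}$, which goes the wrong way here. What is needed (and what the paper does, via the substitution $\lambda(r)=\phi(t_1^\ast-r)-t_1^\ast+r$ and the identity anchored at the terminal time $t_0^\ast$) is the backward estimate $0\le\phi(t)-t\le\frac{K_{\max}}{K_{\min}}(t_0^\ast-t_1^\ast)e^{\frac{LK_{\max}}{K_{\min}}(t_1^\ast-t)}$, evaluated at the starting time, which converts the known positive gap there into the desired lower bound on the arrival-time gain $t_0^\ast-t_1^\ast$. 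With that direction fixed (and after arranging $\phi(t)\ge t$, as the paper does), your argument closes; the concluding heuristics about the eikonal equation are not needed.
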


\begin{proof}
Let $R > 0$, $T$ and $\psi$ be the functions from Proposition~\ref{prop-T-psi}, and $L_{\psi(R)}$ be the constant from \ref{HypoOCP-k-Lip} applied with $R$ replaced by $\psi(R)$. We denote $L_{\psi(R)}$ simply by $L$ in the sequel for simplicity. Let $x \in \overline B_X(\mathbf 0, R)$ and $t_0, t_1 \in \mathbb R_+$ with $t_0 \neq t_1$ and suppose, without loss of generality, that $t_1 < t_0$.

Let $\gamma_0 \in \Opt(k, g, t_0, x)$ and $x_0 = \gamma_0(t_0 + \tau(t_0, \gamma)) \in \Gamma$ and recall that, by Proposition~\ref{prop-T-psi}, we have $\gamma_0(t) \in \overline B_X(\mathbf 0, \psi(R))$ for every $t \in \mathbb R_+$. Similarly to the proof of Lemma~\ref{lemm-varphi-Lipschitz}, let $\phi\colon [t_1, +\infty)\allowbreak \to [t_0, +\infty)$ be a function satisfying \eqref{eq-EDO-phi}. Note that, with no loss of generality, we may assume that $\phi(t) \geq t$ for every $t \in [t_1, +\infty)$. Indeed, $\phi(t_1) = t_0 > t_1$ and, if there exists $t > t_1$ such that $\phi(t) < t$, then, setting $t_2 = \inf\{t \in [t_1, +\infty) \suchthat \phi(t) = t\}$, we define $\widetilde\phi\colon [t_1, +\infty) \to [t_0, +\infty)$ by $\widetilde\phi(t) = \phi(t)$ for $t \in [t_1, t_2]$ and $\widetilde\phi(t) = t$ for $t \geq t_2$. It is immediate to verify that $\widetilde\phi$ is a solution of \eqref{eq-EDO-phi} with $\widetilde\phi(t) \geq t$ for every $t \in [t_1, +\infty)$. We thus assume in the sequel that $\phi(t) \geq t$ for every $t \in [t_1, +\infty)$.

Define $\gamma_1\colon \mathbb R_+ \to X$ by $\gamma_1(t) = x$ for $t \in [0, t_1]$ and $\gamma_1(t) = \gamma_0(\phi(t))$ for all $t \geq t_1$. By construction, we have $\gamma_1 \in \Adm(k)$, $t_0 + \tau(t_0, \gamma_0) = \phi(t_1 + \tau(t_1, \gamma_1))$, and $t_0 + \varphi(t_0, x) = t_0 + \tau(t_0, \gamma_0) + g(x_0) = \phi(t_1 + \tau(t_1, \gamma_1)) + g(x_0)$. For $i \in \{1, 2\}$, we set $t_i^\ast = t_i + \tau(t_i, \gamma_i)$, so that $\phi(t_1^\ast) = t_0^\ast$. In particular, $t_0^\ast \geq t_1^\ast$.

Similarly to the proof of Lemma~\ref{lemm-varphi-Lipschitz}, we have the identity
\[
\int_{\phi(t)}^{t_0^\ast} k(s, \gamma_0(s)) \diff s = \int_t^{t_1^\ast} k(s, \gamma_0(\phi(s))) \diff s \qquad \text{ for every } t \in [t_1, t_1^\ast].
\]
Defining this time $H\colon [0, t_0^\ast] \to \mathbb R_+$ for $\theta \in [0, t_0^\ast]$ by
\[
H(\theta) = \int_{\theta}^{t_0^\ast} k(s, \gamma_0(s)) \diff s,
\]
we have
\begin{alignat*}{2}
H(\phi(t)) & = \int_t^{t_1^\ast} k(s, \gamma_0(\phi(s))) \diff s, & \qquad & \text{ for } t \in [t_1, t_1^\ast], \\
H(t) & = \int_{t}^{t_0^\ast} k(s, \gamma_0(s)) \diff s, & & \text{ for } t \in [0, t_0^\ast],
\end{alignat*}
and thus, proceeding as in the estimate \eqref{eq-estim-phit-minus-t} from the proof of Lemma~\ref{lemm-varphi-Lipschitz}, we deduce that, for every $t \in [t_1, t_1^\ast]$,
\[
0 \leq \phi(t) - t \leq \frac{L K_{\max}}{K_{\min}} \int_t^{t_1^\ast} (\phi(s) - s) \diff s + \frac{K_{\max}}{K_{\min}} (t_0^\ast - t_1^\ast).
\]
Defining $\lambda(r) = \phi(t_1^\ast - r) - t_1^\ast + r$ for $r \in [0, \tau(t_1, \gamma_1)]$, rewriting the above inequality in terms of $\lambda$, and using Grönwall's inequality, we deduce that, for every $t \in [t_1, t_1^\ast]$, we have
\[
0 \leq \phi(t) - t \leq \frac{K_{\max}}{K_{\min}} (t_0^\ast - t_1^\ast) e^{\frac{L K_{\max}}{K_{\min}} (t_1^\ast - t)},
\]
and thus
\begin{equation}
\label{eq-estim-t0-minus-t1}
t_0 - t_1 \leq \frac{K_{\max}}{K_{\min}} (t_0^\ast - t_1^\ast) e^{\frac{L K_{\max}}{K_{\min}} \tau(t_1, \gamma_1)}.
\end{equation}

Using Proposition~\ref{prop-T-psi} and the fact that $\phi^{-1}$ is $\frac{K_{\max}}{K_{\min}}$-Lipschitz continuous, we get the estimate
\begin{equation*}
\tau(t_1, \gamma_1) = \phi^{-1}(t_0^\ast) - t_1 = \phi^{-1}(t_0^\ast) - \phi^{-1}(t_0) \leq \frac{K_{\max}}{K_{\min}} \tau(t_0, \gamma_0) \leq \frac{K_{\max}}{K_{\min}} \varphi(t_0, x) \leq \frac{K_{\max}}{K_{\min}} T(R),
\end{equation*}
and thus, setting $c = \frac{K_{\min}}{K_{\max}} \exp\left(-\frac{T(R) L K_{\max}^2}{K_{\min}^2}\right) > 0$, we deduce from \eqref{eq-estim-t0-minus-t1} that $c(t_0 - t_1) \leq t_0^\ast - t_1^\ast$, that is,
\[
(c-1)(t_0 - t_1) \leq \tau(t_0, \gamma_0) - \tau(t_1, \gamma_1),
\]
and the conclusion follows by noticing that $\varphi(t_0, x) = \tau(t_0, \gamma_0) + g(x_0)$ and $\varphi(t_1, x) \leq \tau(t_1, \gamma_1) + g(x_0)$.
\end{proof}

\subsection{Properties of the mean field game}

Using the properties of the optimal control problem $\OCP(X, \Gamma, k, g)$, we now establish, in this section, some properties of the mean field game $\MFG(X, \Gamma, K, g, m_0)$. Since $X$ is not assumed to be compact, we need here results that are finer than those from \cite{Mazanti2019Minimal, Dweik2020Sharp, Sadeghi2021Characterization, Sadeghi2022Nonsmooth}, and, for this reason, we follow the approach from \cite{Sadeghi2022Multi}. However, some adaptations are needed, since \cite{Sadeghi2022Multi} only deals with the case of the Euclidean space $X = \mathbb R^d$, without considering state constraints, and it also only considers the minimal-time problem, while we also have the exit cost $g$ here.

Recall that, according to Definition~\ref{DefiEquilibriumMFG}, (weak or strong) equilibria of $\MFG(X,\allowbreak \Gamma,\allowbreak K,\allowbreak g,\allowbreak m_0)$ are described in terms of a measure $Q \in \mathcal P(\mathcal C_X)$. Given such a measure $Q$, we shall consider the optimal control problem $\OCP(X, \Gamma, k_{Q}, g)$, with $k_{Q}$ given by $k_{Q}(t, x) = K({e_t}_{\#} Q, x)$ for $(t, x) \in \mathbb R_+ \times X$. We will denote the value function of $\OCP(X,\allowbreak \Gamma,\allowbreak k_{Q},\allowbreak g)$ by $\varphi_{Q}$, and we omit $Q$ from the notation of both $k_{Q}$ and $\varphi_{Q}$ when it is clear from the context.

Given $Q \in \mathcal C_X$, we denote in the sequel by $\OOpt(Q)$ the set
\begin{equation}
\label{eq-OOpt}
\OOpt(Q) = \bigcup_{x \in X} \Opt(Q, g, 0, x),
\end{equation}
i.e., $\OOpt(Q)$ is the set of optimal trajectories for $\OCP(X, \Gamma, k_Q, g)$ starting at time $0$. Note that $Q$ is a weak equilibrium if and only if $Q(\OOpt(Q)) = 1$, and it is a strong equilibrium if and only if $\supp(Q) \subset \OOpt(Q)$.

We start with the following preliminary result, whose proof is straightforward.

\begin{lemma}
\label{lemm-etQ-continuous}
\mbox{}
\begin{enumerate}
\item For every $Q \in \mathcal P(\mathcal C_X)$, the function $\mathbb R_+ \ni t \mapsto {e_t}_{\#} Q \in \mathcal P(X)$ is continuous.
\item For every $t \in \mathbb R_+$, the function $\mathcal P(\mathcal C_X) \ni Q \mapsto {e_t}_{\#} Q \in \mathcal P(X)$ is continuous.
\end{enumerate}
\end{lemma}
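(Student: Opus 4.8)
The plan is to treat the two items separately, relying in both cases on two standard facts: first, since $X$ is complete and separable and $\mathcal C_X$ is Polish, the topologies of weak convergence on $\mathcal P(X)$ and $\mathcal P(\mathcal C_X)$ are metrizable, so it suffices to establish sequential continuity; second, by the Portmanteau theorem recalled in Section~\ref{sec:notation}, weak convergence can be tested against bounded continuous real-valued functions.

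For item (a), I would fix $Q \in \mathcal P(\mathcal C_X)$ and a sequence $(t_n)_{n \in \mathbb N}$ in $\mathbb R_+$ converging to some $t \in \mathbb R_+$. For any bounded continuous $f\colon X \to \mathbb R$, the change-of-variables formula for pushforwards gives $\int_X f \diff ({e_{t_n}}_\# Q) = \int_{\mathcal C_X} f(\gamma(t_n)) \diff Q(\gamma)$. Since every $\gamma \in \mathcal C_X$ is continuous and $f$ is continuous, $f(\gamma(t_n)) \to f(\gamma(t))$ for each $\gamma$, and all the integrands are bounded by $\sup_{x \in X}\abs{f(x)}$, so the dominated convergence theorem yields $\int_X f \diff ({e_{t_n}}_\# Q) \to \int_{\mathcal C_X} f(\gamma(t)) \diff Q(\gamma) = \int_X f \diff ({e_t}_\# Q)$. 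As $f$ was arbitrary, ${e_{t_n}}_\# Q \to {e_t}_\# Q$ weakly, which proves the continuity claimed in (a).

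For item (b), I would fix $t \in \mathbb R_+$ and first note that the evaluation map $e_t\colon \mathcal C_X \to X$ is continuous for the topology of uniform convergence on compact sets, since $\gamma_n \to \gamma$ in $\mathcal C_X$ implies in particular $\gamma_n(t) \to \gamma(t)$. Consequently, for any bounded continuous $f\colon X \to \mathbb R$, the composition $f \circ e_t\colon \mathcal C_X \to \mathbb R$ is bounded and continuous, so whenever $Q_n \to Q$ weakly in $\mathcal P(\mathcal C_X)$ we get $\int_X f \diff ({e_t}_\# Q_n) = \int_{\mathcal C_X} f \circ e_t \diff Q_n \to \int_{\mathcal C_X} f \circ e_t \diff Q = \int_X f \diff ({e_t}_\# Q)$, i.e.\ ${e_t}_\# Q_n \to {e_t}_\# Q$ weakly. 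This is just the standard continuity of the pushforward by a continuous map with respect to the weak topology.

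Neither step presents a genuine obstacle, which is why the authors describe the proof as straightforward; the only points deserving a line of justification are the reduction to sequential arguments (metrizability of the weak topology on $\mathcal P(X)$ and $\mathcal P(\mathcal C_X)$) and, in item (a), the uniform bound $\sup_{x\in X}\abs{f(x)}$ that licenses the passage to the limit under the integral sign.
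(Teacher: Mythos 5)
Your proof is correct and is precisely the standard argument the paper has in mind when it omits the proof as ``straightforward'': dominated convergence against bounded continuous test functions for item (a), and continuity of the pushforward under the continuous map $e_t$ for item (b). The reduction to sequences is legitimate (for (a) it only requires first countability of $\mathbb R_+$; for (b) metrizability of the weak topology on $\mathcal P(\mathcal C_X)$, which holds since $\mathcal C_X$ is Polish), so there is nothing to add.
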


As a consequence, we immediately obtain the following result.

\begin{corollary}
\label{coro-k-from-K}
Let $K\colon \mathcal P(X) \times X \to \mathbb R_+$ be a function satisfying \ref{HypoMFG-K-Bound} and \ref{HypoMFG-K-Lip}, $Q \in \mathcal P(\mathcal C_X)$, and $k_Q\colon \mathbb R_+ \times X \to \mathbb R_+$ be defined for $(t, x) \in \mathbb R_+ \times X$ by $k_Q(t, x) = K({e_t}_{\#}Q, x)$. Then $k_Q$ satisfies \ref{HypoOCP-k-Bound} and \ref{HypoOCP-k-Lip} with the same constants $K_{\min}$, $K_{\max}$, and $L_R$ as $K$.
\end{corollary}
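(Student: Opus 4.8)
The plan is to read each of \ref{HypoOCP-k-Bound} and \ref{HypoOCP-k-Lip} for $k_Q$ directly off the corresponding hypotheses \ref{HypoMFG-K-Bound} and \ref{HypoMFG-K-Lip} on $K$, the only external ingredient being the continuity of the curve $t \mapsto {e_t}_{\#} Q$ in $\mathcal P(X)$, which is exactly the first assertion of Lemma~\ref{lemm-etQ-continuous}. There is essentially no obstacle here; the point of the statement is that the constants carry over unchanged, which is why I would keep track of them explicitly throughout.

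First I would check the bounds. Since ${e_t}_{\#} Q \in \mathcal P(X)$ for every $t \in \mathbb R_+$, hypothesis \ref{HypoMFG-K-Bound} applied with $\mu = {e_t}_{\#} Q$ gives $k_Q(t, x) = K({e_t}_{\#} Q, x) \in [K_{\min}, K_{\max}]$ for every $(t, x) \in \mathbb R_+ \times X$, with the same $K_{\min}$ and $K_{\max}$. Next I would check the spatial Lipschitz estimate: fixing $R > 0$ and letting $L_R$ be the constant from \ref{HypoMFG-K-Lip} associated with $R$, for $t \in \mathbb R_+$ and $x_1, x_2 \in \overline B_X(\mathbf 0, R)$ the bound \ref{HypoMFG-K-Lip} applied with $\mu = {e_t}_{\#} Q$ yields $\abs{k_Q(t, x_1) - k_Q(t, x_2)} = \abs{K({e_t}_{\#} Q, x_1) - K({e_t}_{\#} Q, x_2)} \leq L_R \dist(x_1, x_2)$. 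Since in \ref{HypoMFG-K-Lip} the constant $L_R$ is independent of $\mu$, the resulting bound for $k_Q$ is independent of $t$, which is precisely \ref{HypoOCP-k-Lip} with the same $L_R$.

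Finally I would establish continuity of $k_Q$ on $\mathbb R_+ \times X$. The map $(t, x) \mapsto ({e_t}_{\#} Q, x)$ from $\mathbb R_+ \times X$ to $\mathcal P(X) \times X$ is continuous, since its first component is continuous by the first assertion of Lemma~\ref{lemm-etQ-continuous} and its second component is the identity; composing with $K$, whose continuity is part of \ref{HypoMFG-K-Bound}, shows $k_Q$ is continuous. Combined with the previous two steps, this gives that $k_Q$ satisfies \ref{HypoOCP-k-Bound} and \ref{HypoOCP-k-Lip} with the same constants $K_{\min}$, $K_{\max}$, and $L_R$ as $K$, as claimed.
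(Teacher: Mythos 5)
Your proof is correct and follows exactly the route the paper intends: the paper presents this corollary as an immediate consequence of Lemma~\ref{lemm-etQ-continuous}, and your argument — substituting $\mu = {e_t}_{\#} Q$ into \ref{HypoMFG-K-Bound} and \ref{HypoMFG-K-Lip} to carry over the constants, and composing $K$ with the continuous map $(t, x) \mapsto ({e_t}_{\#} Q, x)$ for continuity — is precisely the omitted verification. No issues.
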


The next result, which is a consequence of Proposition~\ref{prop-T-psi}, states an important a priori property of equilibria, which will allow us in the sequel to restrict our search for equilibria to a compact space. This result was already stated, without proof, in \cite[Lemma~5.3]{Sadeghi2022Multi}, and we provide its proof below for completeness.

\begin{proposition}
\label{prop-equilibrium-psi}
Consider the mean field game $\MFG(X, \Gamma, K, g, m_0)$ and assume that \ref{Hypo-X-SigmaCompact}--\ref{Hypo-X-dist}, \ref{HypoMFG-K-Bound}, and \ref{HypoMFG-g-compatible} are satisfied. Then there exists a nondecreasing function with linear growth $\psi\colon \mathbb R_+ \to \mathbb R_+$, depending only on $\mathbf 0$, $\Gamma$, $g$, $D$, $K_{\min}$, and $K_{\max}$, such that, for every weak equilibrium $Q \in \mathcal P(\mathcal C_X)$ of $\MFG(X, \Gamma, K, g, m_0)$ and $R > 0$, we have
\[
Q\Bigl(\Lip_{K_{\max}}\bigl(\overline B_X(\mathbf 0, \psi(R))\bigr)\Bigr) \geq m_0\bigl(\overline B_X(\mathbf 0, R)\bigr).
\]
In particular, we have ${e_t}_{\#} Q\bigl(\overline B_X(\mathbf 0, \psi(R))\bigr) \geq m_0\bigl(\overline B_X(\mathbf 0, R)\bigr)$ for every $t \in \mathbb R_+$.
\end{proposition}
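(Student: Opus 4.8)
The plan is to reduce the statement to the a priori bound on optimal trajectories provided by Proposition~\ref{prop-T-psi}, whose crucial feature is that the map $\psi$ it produces depends only on $\mathbf 0$, $\Gamma$, $g$, $D$, $K_{\min}$, $K_{\max}$ and \emph{not} on the particular equilibrium. First I would fix a weak equilibrium $Q \in \mathcal P(\mathcal C_X)$, set $k_Q(t, x) = K({e_t}_\# Q, x)$, and verify that $\OCP(X, \Gamma, k_Q, g)$ satisfies \ref{Hypo-X-SigmaCompact}--\ref{HypoOCP-k-Bound} and \ref{HypoOCP-g-compatible}: hypotheses \ref{Hypo-X-SigmaCompact}--\ref{Hypo-X-dist} are assumed; \ref{HypoOCP-k-Bound} holds for $k_Q$ because $K$ takes values in $[K_{\min}, K_{\max}]$ by \ref{HypoMFG-K-Bound} and $t \mapsto {e_t}_\# Q$ is continuous by Lemma~\ref{lemm-etQ-continuous}, so $k_Q$ is continuous with the same bounds; and \ref{HypoOCP-g-compatible} then follows from \ref{HypoMFG-g-compatible}. (Note that \ref{HypoMFG-K-Lip}, hence Corollary~\ref{coro-k-from-K}, is not needed here, as Proposition~\ref{prop-T-psi} does not invoke \ref{HypoOCP-k-Lip}.) Applying Proposition~\ref{prop-T-psi} to $\OCP(X, \Gamma, k_Q, g)$ yields nondecreasing maps with linear growth $\psi, T\colon \mathbb R_+ \to \mathbb R_+$, depending only on the data listed above, such that for every $R > 0$, every $(t_0, x_0) \in \mathbb R_+ \times \overline B_X(\mathbf 0, R)$, and every $\gamma \in \Opt(Q, g, t_0, x_0)$ one has $\gamma(t) \in \overline B_X(\mathbf 0, \psi(R))$ for all $t \ge 0$.

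The key step is the inclusion
\[
\OOpt(Q) \cap e_0^{-1}\bigl(\overline B_X(\mathbf 0, R)\bigr) \subseteq \Lip_{K_{\max}}\bigl(\overline B_X(\mathbf 0, \psi(R))\bigr).
\]
Indeed, if $\gamma \in \OOpt(Q)$ with $\gamma(0) \in \overline B_X(\mathbf 0, R)$, then $\gamma \in \Opt(Q, g, 0, \gamma(0))$, so $\gamma \in \Adm(k_Q) \subseteq \Lip_{K_{\max}}(X)$ and, applying the bound above with $t_0 = 0$ and $x_0 = \gamma(0)$, the curve $\gamma$ takes values in $\overline B_X(\mathbf 0, \psi(R))$; this is exactly membership in $\Lip_{K_{\max}}(\overline B_X(\mathbf 0, \psi(R)))$. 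Since $\overline B_X(\mathbf 0, \psi(R))$ is compact by \ref{Hypo-X-SigmaCompact}, the set $\Lip_{K_{\max}}(\overline B_X(\mathbf 0, \psi(R)))$ is compact in $\mathcal C_X$ by Arzel\`a--Ascoli, in particular Borel, so the quantity on the left-hand side of the claimed inequality is well defined.

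It then remains to pass to measures. Since $Q$ is a weak equilibrium, $Q$-almost every $\gamma$ belongs to $\OOpt(Q)$, so there is a Borel $Q$-null set $N$ with $\mathcal C_X \setminus N \subseteq \OOpt(Q)$; combining this with the inclusion above gives $e_0^{-1}(\overline B_X(\mathbf 0, R)) \setminus \Lip_{K_{\max}}(\overline B_X(\mathbf 0, \psi(R))) \subseteq N$, hence
\[
Q\Bigl(\Lip_{K_{\max}}\bigl(\overline B_X(\mathbf 0, \psi(R))\bigr)\Bigr) \geq Q\bigl(e_0^{-1}(\overline B_X(\mathbf 0, R))\bigr) = {e_0}_\# Q\bigl(\overline B_X(\mathbf 0, R)\bigr) = m_0\bigl(\overline B_X(\mathbf 0, R)\bigr),
\]
where the last equality uses ${e_0}_\# Q = m_0$. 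For the final assertion, every $\gamma \in \Lip_{K_{\max}}(\overline B_X(\mathbf 0, \psi(R)))$ has $e_t(\gamma) = \gamma(t) \in \overline B_X(\mathbf 0, \psi(R))$ for each $t$, so $\Lip_{K_{\max}}(\overline B_X(\mathbf 0, \psi(R))) \subseteq e_t^{-1}(\overline B_X(\mathbf 0, \psi(R)))$ and therefore ${e_t}_\# Q(\overline B_X(\mathbf 0, \psi(R))) \ge Q(\Lip_{K_{\max}}(\overline B_X(\mathbf 0, \psi(R)))) \ge m_0(\overline B_X(\mathbf 0, R))$. There is no serious difficulty in this argument; the only point requiring a little care is the last-paragraph bookkeeping relating the (a priori possibly non-Borel) set $\OOpt(Q)$ to the compact, hence Borel, set $\Lip_{K_{\max}}(\overline B_X(\mathbf 0, \psi(R)))$ through the null set $N$ — the uniformity in $Q$ that makes the whole scheme work is already supplied by Proposition~\ref{prop-T-psi}.
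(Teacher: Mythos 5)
Your proof is correct and follows essentially the same route as the paper's: apply Proposition~\ref{prop-T-psi} to $\OCP(X,\Gamma,k_Q,g)$, observe that $\psi$ is independent of $Q$, deduce the inclusion of $\OOpt(Q)\cap e_0^{-1}(\overline B_X(\mathbf 0,R))$ into $\Lip_{K_{\max}}(\overline B_X(\mathbf 0,\psi(R)))$, and conclude via $Q(\OOpt(Q))=1$. Your extra care about the possible non-measurability of $\OOpt(Q)$ (routing the estimate through a Borel null set) and your remark that \ref{HypoOCP-k-Lip} is not needed are minor refinements of, not departures from, the paper's argument.
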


\begin{proof}
Let $Q \in \mathcal P(\mathcal C_X)$ be a weak equilibrium of $\MFG(X, \Gamma, K, g, m_0)$, $k\colon \mathbb R_+ \times X \to \mathbb R_+$ be defined for $(t, x) \in \mathbb R_+ \times X$ by $k(t, x) = K({e_t}_{\#} Q, x)$, and $\psi$ be the function obtained by applying Proposition~\ref{prop-T-psi} to $\OCP(X, \Gamma, k, g)$. Recalling Corollary~\ref{coro-k-from-K}, we observe that $\psi$ only depends on $\mathbf 0$, $\Gamma$, $g$, $D$, $K_{\min}$, and $K_{\max}$, and, in particular, it is independent of $Q$. By Proposition~\ref{prop-T-psi}, we have
\[
\{\gamma \in \mathcal C_X \suchthat \gamma(0) \in \overline B_X(\mathbf 0, R)\} \cap \OOpt(Q) \subset \Lip_{K_{\max}}\bigl(\overline B_X(\mathbf 0, \psi(R))\bigr) \cap \OOpt(Q),
\]
where $\OOpt(Q)$ is the set defined in \eqref{eq-OOpt}. Recall that, $Q$ being a weak equilibrium, we have $Q(\OOpt(Q)) = 1$, and thus the above inclusion implies that
\begin{align*}
Q\Bigl(\Lip_{K_{\max}}\bigl(\overline B_X(\mathbf 0, \psi(R))\bigr)\Bigr) & = Q\Bigl(\Lip_{K_{\max}}\bigl(\overline B_X(\mathbf 0, \psi(R))\bigr) \cap \OOpt(Q)\Bigr) \\
& \geq Q\bigl(\{\gamma \in \mathcal C_X \suchthat \gamma(0) \in \overline B_X(\mathbf 0, R)\} \cap \OOpt(Q)\bigr) \\
& = Q\bigl(\{\gamma \in \mathcal C_X \suchthat \gamma(0) \in \overline B_X(\mathbf 0, R)\}\bigr) = m_0\bigl(\overline B_X(\mathbf 0, R)\bigr),
\end{align*}
since ${e_0}_{\#} Q = m_0$.

Finally, the last part of the conclusion follows as an immediate consequence of the inclusion $\Lip_{K_{\max}}\bigl(\overline B_X(\mathbf 0, \psi(R))\bigr) \subset {e_t}^{-1}\bigl(\overline B_X(\mathbf 0, \psi(R))\bigr)$.
\end{proof}

It follows from Proposition~\ref{prop-equilibrium-psi} that it suffices to look for equilibria of the mean field game $\MFG(X, \Gamma, K, g, m_0)$ in the set
\begin{multline}
\label{eq-defi-Q}
\mathfrak{Q} = \Bigl\{Q \in \mathcal{P}(\mathcal C_X) \mathrel{\Big\vert} {e_0}_{\#} Q = m_0 \text{ and } \\ \forall R > 0, Q\Bigl(\Lip_{K_{\max}}\bigl(\overline B_X(\mathbf 0, \psi(R))\bigr)\Bigr) \geq m_0\bigl(\overline B_X(\mathbf 0, R)\bigr)\Bigr\},
\end{multline}
where $\psi$ is the function from the statement of Proposition~\ref{prop-equilibrium-psi}. The next result provides elementary properties of $\mathfrak Q$. Its proof, omitted here, is identical to that of \cite[Lemma~5.4]{Sadeghi2022Multi}, since the fact that \cite{Sadeghi2022Multi} considers $X = \mathbb R^d$ and $g = 0$ plays no particular role in the proof.

\begin{proposition}\label{PropQNonemptyConvexCompact}
Consider the mean field game $\MFG(X, \Gamma, K, g, m_0)$, assume that \ref{Hypo-X-SigmaCompact}--\ref{Hypo-X-dist}, \ref{HypoMFG-K-Bound}, and \ref{HypoMFG-g-compatible} are satisfied, and let $\mathfrak Q$ be the set defined in \eqref{eq-defi-Q}. Then $\mathfrak Q$ is nonempty, convex, and compact.
\end{proposition}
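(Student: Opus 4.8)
The plan is to treat the three claims separately, compactness being the only one requiring real work. For \emph{nonemptiness} I would exhibit an explicit element: let $\iota\colon X \to \mathcal C_X$ be the map sending a point $x$ to the constant curve equal to $x$ at all times; it is continuous (hence Borel) and satisfies $e_0 \circ \iota = \mathrm{id}_X$, so $Q_0 := \iota_\# m_0$ obeys ${e_0}_\# Q_0 = m_0$. A constant curve has vanishing metric derivative, so it lies in $\Lip_{K_{\max}}$, and since $\psi(R) \geq R$ (which is clear from the construction of $\psi$ in the proof of Proposition~\ref{prop-T-psi}), the constant curve at any point of $\overline B_X(\mathbf 0, R)$ lies in $\Lip_{K_{\max}}(\overline B_X(\mathbf 0, \psi(R)))$. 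Hence $Q_0\bigl(\Lip_{K_{\max}}(\overline B_X(\mathbf 0, \psi(R)))\bigr) \geq m_0(\overline B_X(\mathbf 0, R))$ for every $R > 0$, so $Q_0 \in \mathfrak Q$. \emph{Convexity} is then immediate: $Q \mapsto {e_0}_\# Q$ is affine and, for any fixed Borel set $A$, $Q \mapsto Q(A)$ is affine, so the equality ${e_0}_\# Q = m_0$ and each inequality in the definition of $\mathfrak Q$ are preserved under convex combinations.

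For \emph{compactness}, since $\mathcal C_X$ is Polish the space $\mathcal P(\mathcal C_X)$ is metrizable, so it is enough to prove that $\mathfrak Q$ is tight --- hence relatively compact by Prokhorov's theorem --- and closed. For tightness, given $\varepsilon > 0$, I would use that $X = \bigcup_{n \in \mathbb N} \overline B_X(\mathbf 0, n)$ by \ref{Hypo-X-SigmaCompact} and that $m_0$ is a probability measure to choose $R > 0$ with $m_0(\overline B_X(\mathbf 0, R)) \geq 1 - \varepsilon$. The set $\Lip_{K_{\max}}(\overline B_X(\mathbf 0, \psi(R)))$ is a compact subset of $\mathcal C_X$: indeed $\overline B_X(\mathbf 0, \psi(R))$ is compact by \ref{Hypo-X-SigmaCompact}, so $\Lip_{K_{\max}}(\overline B_X(\mathbf 0, \psi(R)))$ is compact in $\mathcal C_{\overline B_X(\mathbf 0, \psi(R))}$ by Arzel\`a--Ascoli, and the inclusion $\mathcal C_{\overline B_X(\mathbf 0, \psi(R))} \hookrightarrow \mathcal C_X$ is continuous. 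By the definition of $\mathfrak Q$, every $Q \in \mathfrak Q$ assigns this compact set mass at least $m_0(\overline B_X(\mathbf 0, R)) \geq 1 - \varepsilon$, which is the desired tightness. For closedness, let $(Q_n)_{n \in \mathbb N}$ be a sequence in $\mathfrak Q$ converging weakly to some $Q$. Since $Q \mapsto {e_0}_\# Q$ is continuous by Lemma~\ref{lemm-etQ-continuous} and ${e_0}_\# Q_n = m_0$ for every $n$, we get ${e_0}_\# Q = m_0$. For each $R > 0$, $\Lip_{K_{\max}}(\overline B_X(\mathbf 0, \psi(R)))$ is closed in $\mathcal C_X$ (being compact, in the metrizable space $\mathcal C_X$), so the Portmanteau theorem yields $Q\bigl(\Lip_{K_{\max}}(\overline B_X(\mathbf 0, \psi(R)))\bigr) \geq \limsup_n Q_n\bigl(\Lip_{K_{\max}}(\overline B_X(\mathbf 0, \psi(R)))\bigr) \geq m_0(\overline B_X(\mathbf 0, R))$. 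Hence $Q \in \mathfrak Q$, so $\mathfrak Q$ is closed, and therefore compact.

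The step I expect to be the main obstacle --- though it remains fairly elementary --- is the compactness argument, and within it the verification that $\Lip_{K_{\max}}(\overline B_X(\mathbf 0, \psi(R)))$ is a compact, hence closed, subset of $\mathcal C_X$ in the topology of uniform convergence on compact sets; this rests entirely on the local compactness hypothesis \ref{Hypo-X-SigmaCompact} via Arzel\`a--Ascoli. Everything else reduces to Prokhorov's theorem, the Portmanteau theorem, tightness of $m_0$, the continuity statement of Lemma~\ref{lemm-etQ-continuous}, and the affine structure and explicit constant-curve construction used for convexity and nonemptiness.
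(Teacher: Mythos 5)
Your proof is correct and follows exactly the standard route that the paper delegates to \cite[Lemma~5.4]{Sadeghi2022Multi}: nonemptiness via the pushforward of $m_0$ under the constant-curve embedding (using $\psi(R) \geq R$), convexity by affinity of $Q \mapsto {e_0}_\# Q$ and $Q \mapsto Q(A)$, and compactness by combining tightness of $\mathfrak Q$ (from the compactness of $\Lip_{K_{\max}}(\overline B_X(\mathbf 0, \psi(R)))$ guaranteed by \ref{Hypo-X-SigmaCompact} and Arzel\`a--Ascoli) with Prokhorov's theorem and closedness via the Portmanteau theorem and Lemma~\ref{lemm-etQ-continuous}. No gaps.
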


We now prove an additional continuity property of the value function, stating that it is continuous when it is regarded as a function not only of $(t, x)$ but also of the measure $Q$. This property was stated in \cite[Proposition~5.2]{Mazanti2019Minimal} and \cite[Lemma~5.2]{Sadeghi2022Multi} in the case $g = 0$, but none of these proofs can be immediately applied to our setting: \cite[Proposition~5.2]{Mazanti2019Minimal} makes the additional assumption of Lipschitz behavior of $K$ with respect to its first variable, deducing Lipschitz behavior of $\varphi$ with respect to $Q$, which is not necessarily the case in our context, while \cite[Lemma~5.2]{Sadeghi2022Multi} makes use of the Euclidean structure of $\mathbb R^d$ and the absence of state constraints in some steps of the proof, exploiting also deeply the fact that $g = 0$. The proof we present here makes use of a different strategy, its ideas being inspired by a combination of those of \cite[Proposition~5.2]{Mazanti2019Minimal} and \cite[Proposition~3.9]{Dweik2020Sharp}, and it is reminiscent of the strategy of proof used in \cite[Lemma~4.5]{Dweik2020Sharp}.

\begin{proposition}
\label{PropValueFunctionContinuous}
Consider the mean field game $\MFG(X, \Gamma, K, g, m_0)$ and assume that \ref{Hypo-X-SigmaCompact}--\ref{Hypo-X-dist} and \ref{HypoMFG-K-Bound}--\ref{HypoMFG-g-compatible} are satisfied. Then $(t, x, Q) \mapsto \varphi_Q(t, x)$ is continuous on $\mathbb R_+ \times X \times \mathcal P(\mathcal C_X)$.
\end{proposition}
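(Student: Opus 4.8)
The plan is to prove sequential continuity: take a sequence $(t_n, x_n, Q_n) \to (t, x, Q)$ in $\mathbb R_+ \times X \times \mathcal P(\mathcal C_X)$ and show $\varphi_{Q_n}(t_n, x_n) \to \varphi_Q(t, x)$. Since all the $x_n$ lie in some ball $\overline B_X(\mathbf 0, R)$, Proposition~\ref{prop-varphi-Lipschitz} applied to each $\OCP(X, \Gamma, k_{Q_n}, g)$ gives a modulus $M = M(R)$ that, by Corollary~\ref{coro-k-from-K}, is \emph{independent of $n$}; hence it suffices to handle the case $t_n = t$ and $x_n = x$ fixed, reducing the problem to showing $\varphi_{Q_n}(t, x) \to \varphi_Q(t, x)$.

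For this reduced problem I would split into $\limsup$ and $\liminf$. For the $\limsup$, take an optimal trajectory $\gamma \in \Opt(Q, g, t, x)$ for the limiting problem; by Proposition~\ref{prop-T-psi} it stays in a fixed ball $\overline B_X(\mathbf 0, \psi(R))$. The idea, borrowed from the time-reparametrization trick in the proof of Lemma~\ref{lemm-varphi-Lipschitz} (and from \cite[Proposition~3.9]{Dweik2020Sharp}), is to build a competitor $\gamma_n \in \Adm(k_{Q_n})$ by reparametrizing $\gamma$ in time: set $\gamma_n(t + s) = \gamma(t + \phi_n(s))$ where $\phi_n$ solves an ODE analogous to \eqref{eq-EDO-phi} with $k$ replaced by $k_{Q_n}$ in the numerator and $k_Q$ in the denominator. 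Since ${e_s}_\# Q_n \to {e_s}_\# Q$ for each $s$ by Lemma~\ref{lemm-etQ-continuous}, and since $K$ is continuous, one gets $k_{Q_n}(s, \gamma(\cdot)) \to k_Q(s, \gamma(\cdot))$ pointwise on the relevant compact time–space set; a Grönwall/dominated-convergence argument along the lines of \eqref{eq-estim-phit-minus-t} then shows $\phi_n \to \mathrm{id}$ uniformly on compact intervals, so the exit time of $\gamma_n$ converges to that of $\gamma$ and the exit position converges (it lies on the closed set $\Gamma$). Using Lemma~\ref{LemmTimePlusGIncreases} to control $G$, this yields $\limsup_n \varphi_{Q_n}(t, x) \le \varphi_Q(t, x)$. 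The $\liminf$ direction is symmetric: pick $\gamma_n \in \Opt(Q_n, g, t, x)$ (existence by Proposition~\ref{PropExistOpt}); by Proposition~\ref{prop-T-psi} all $\gamma_n$ lie in a fixed compact set $\Lip_{K_{\max}}(\overline B_X(\mathbf 0, \psi(R)))$, so up to a subsequence $\gamma_n \to \gamma_\infty$ in $\mathcal C_X$; one checks $\gamma_\infty \in \Adm(k_Q)$ (an Ascoli/Fatou argument as in Proposition~\ref{prop-adm-closed}, using the uniform convergence $k_{Q_n} \to k_Q$ on compacts) and, by lower semicontinuity of $(t,\gamma) \mapsto \tau(t,\gamma) + G(t,\gamma)$ from Lemma~\ref{lemm-tau-plus-G-lsc}, that $\varphi_Q(t, x) \le \tau(t, \gamma_\infty) + G(t, \gamma_\infty) \le \liminf_n \varphi_{Q_n}(t, x)$.

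The main obstacle is making the reparametrization argument for the $\limsup$ fully rigorous when the dependence of $K$ on the measure is merely continuous (not Lipschitz): one cannot simply estimate $|k_{Q_n} - k_Q|$ by a distance between $Q_n$ and $Q$. The resolution is to note that the competitor $\gamma$ (the limiting optimizer) is \emph{fixed}, so one only needs convergence of $s \mapsto k_{Q_n}(s, \gamma(s + t))$ to $s \mapsto k_Q(s, \gamma(s + t))$; by Lemma~\ref{lemm-etQ-continuous} this follows from joint continuity of $K$ and the continuity of $s \mapsto {e_s}_\# Q_n$, and since $\gamma$ ranges over a fixed compact curve segment the convergence can be upgraded to uniform convergence on the relevant bounded time interval by an equicontinuity argument (the $k_{Q_n}$ are uniformly Lipschitz in $x$ on $\overline B_X(\mathbf 0, \psi(R))$ by \ref{HypoMFG-K-Lip} and Corollary~\ref{coro-k-from-K}, and the ball is compact). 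This uniform convergence is exactly what is needed to run Grönwall in the estimate replacing \eqref{eq-estim-phit-minus-t} and conclude $\phi_n \to \mathrm{id}$ uniformly.
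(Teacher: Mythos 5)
Your proposal is correct and follows essentially the same route as the paper's proof: reduction to fixed $(t,x)$ via the $n$-uniform Lipschitz constant from Proposition~\ref{prop-varphi-Lipschitz} and Corollary~\ref{coro-k-from-K}, the $\liminf$ bound via Arzel\`a--Ascoli, Proposition~\ref{prop-T-psi}, and Lemma~\ref{lemm-tau-plus-G-lsc}, and the $\limsup$ bound via the time-reparametrization ODE and a Gr\"onwall estimate. The only cosmetic difference is in handling the error term: the paper does not need uniform convergence of $s \mapsto k_{Q_n}(s,\gamma(s))$ at all --- after Gr\"onwall the error is controlled by $\int \abs{k_{Q_n}(s,\gamma_\ast(s)) - k_{Q_\ast}(s,\gamma_\ast(s))}\diff s$, which tends to $0$ by dominated convergence from pointwise convergence and the bound $K_{\max}$ --- whereas your equicontinuity upgrade, while valid, is slightly more than necessary.
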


\begin{proof}
Let $(t_n, x_n, Q_n)_{n \in \mathbb N}$ be a sequence in $\mathbb R_+ \times X \times \mathcal P(\mathcal C_X)$ converging to some $(t_\ast, x_\ast, Q_\ast)$. For $n \in \mathbb N$ and $(t, x) \in \mathbb R_+ \times X$, define $k_n(t, x) = K({e_t}_{\#} Q_n, x)$ and $k_\ast(t, x) = K({e_t}_{\#} Q_\ast, x)$. Note that, by Lemma~\ref{lemm-etQ-continuous}, we have that $k_n(t, x) \to k_\ast(t, x)$ for every $(t, x) \in \mathbb R_+ \times X$. For simplicity of notation, we write $\varphi_n$ and $\varphi_\ast$ for $\varphi_{Q_n}$ and $\varphi_{Q_{\ast}}$, respectively.

Since $x_n \to x_\ast$ as $n \to +\infty$, there exists $R > 0$ such that $x_n \in \overline B_X(\mathbf 0, R)$ for every $n \in \mathbb N$ and $x_\ast \in \overline B_X(\mathbf 0, R)$. Let $T$ and $\psi$ be the functions obtained when one applies Proposition~\ref{prop-T-psi} to the functions $k_n$, $n \in \mathbb N$, and $k_\ast$; note that, by Proposition~\ref{prop-T-psi}, these functions do not depend on $n$, since $k_n$, $n \in \mathbb N$, and $k_\ast$ all satisfy \ref{HypoOCP-k-Bound} and \ref{HypoOCP-k-Lip} with the same constants $K_{\min}$, $K_{\max}$, and $L_R$, thanks to Corollary~\ref{coro-k-from-K}. Similarly, Proposition~\ref{prop-varphi-Lipschitz} applied to $\OCP(X, \Gamma, k_n, g)$, $n \in \mathbb N$, and to $\OCP(X, \Gamma, k_\ast, g)$ yields a constant $M > 0$ independent of $n$ such that $\varphi_n$, $n \in \mathbb N$, and $\varphi_\ast$ are all $M$-Lipschitz continuous in $\mathbb R_+ \times \overline B_X(\mathbf 0, R)$. In particular, we have
\[
\abs{\varphi_n(t_n, x_n) - \varphi_\ast(t_\ast, x_\ast)} \leq M (\abs{t_n - t_\ast} + \dist(x_n, x_\ast)) + \abs{\varphi_n(t_\ast, x_\ast) - \varphi_\ast(t_\ast, x_\ast)}.
\]
Thus, to conclude the proof, it suffices to show that $\varphi_n(t_\ast, x_\ast) \to \varphi_\ast(t_\ast, x_\ast)$ as $n \to +\infty$. We split this proof into two parts, showing first that $\varphi_\ast(t_\ast, x_\ast) \leq \liminf_{n \to +\infty} \varphi_n(t_\ast, x_\ast)$ and then that $\limsup_{n \to +\infty} \varphi_n(t_\ast, x_\ast) \leq \varphi_\ast(t_\ast, x_\ast)$.

Let us first show that $\varphi_\ast(t_\ast, x_\ast) \leq \liminf_{n \to +\infty} \varphi_n(t_\ast, x_\ast)$. Let $\underline{\varphi} = \liminf_{n \to +\infty} \allowbreak \varphi_n(t_\ast, x_\ast)$ and consider a subsequence of $(\varphi_n(t_\ast, x_\ast))_{n \in \mathbb N}$ converging to $\underline\varphi$, which we still denote by $(\varphi_n(t_\ast, x_\ast))_{n \in \mathbb N}$ for simplicity. For $n \in \mathbb N$, let $\gamma_n \in \Opt(Q_n, g, t_\ast, x_\ast)$ and recall that, by Proposition~\ref{prop-T-psi}, we have $\gamma_n(t) \in \overline B_X(\mathbf 0, \psi(R))$ for every $n \in \mathbb N$ and $t \in \mathbb R_+$. Since in addition $\gamma_n$ is $K_{\max}$-Lipschitz continuous for every $n \in \mathbb N$, by Arzelà--Ascoli theorem, up to extracting a subsequence, there exists $\gamma_\ast \in \Lip_{K_{\max}}(X)$ such that $\gamma_n \to \gamma_\ast$ as $n \to +\infty$ in the topology of $\mathcal C_X$ and $\gamma_\ast(t) \in \overline B_X(\mathbf 0, \psi(R))$ for every $t \in \mathbb R_+$.

Using the facts that $k_n(t, x) \to k_\ast(t, x)$ for every $(t, x) \in \mathbb R_+ \times X$ and that $k_n$, $n \in \mathbb N$, and $k_\ast$ are Lipschitz continuous in their second argument in $\overline B_X(\mathbf 0, \psi(R))$, uniformly with respect to the first argument in $\mathbb R_+$ and to $n \in \mathbb N$, we deduce, proceeding as in the proof of Proposition~\ref{prop-adm-closed}, that $\gamma_\ast \in \Adm(k_\ast)$. In addition, since $\gamma_n(t_\ast) = x_\ast$ for every $n \in \mathbb N$, we have $\gamma_\ast(t_\ast) = x_\ast$. Thus, $\varphi_\ast(t_\ast, x_\ast) \leq \tau(t_\ast, \gamma_\ast) + G(t_\ast, \gamma_\ast)$ and, by Lemma~\ref{lemm-tau-plus-G-lsc}, we have $\tau(t_\ast, \gamma_\ast) + G(t_\ast, \gamma_\ast) \leq \liminf_{n \to +\infty} \tau(t_\ast, \gamma_n) + G(t_\ast, \gamma_n) = \underline\varphi$, yielding the conclusion.

Let us now prove that $\limsup_{n \to +\infty} \varphi_n(t_\ast, x_\ast) \leq \varphi_\ast(t_\ast, x_\ast)$. Let $\gamma_\ast \in \Opt(Q_\ast, g, t_\ast, x_\ast)$ and recall that, by Proposition~\ref{prop-T-psi}, we have $\gamma_\ast(t) \in \overline B_X(\mathbf 0, \psi(R))$ for every $t \in \mathbb R_+$. In the sequel, we let $L$ denote the constant $L_{\psi(R)}$ from \ref{HypoMFG-K-Lip}. For $n \in \mathbb N$, let $\phi_n\colon [t_\ast, +\infty) \to [t_\ast, +\infty)$ be a function of class $\mathcal C^1$ satisfying
\[
\left\{
\begin{aligned}
\dot\phi_n(t) & = \frac{k_n(t, \gamma_\ast(\phi_n(t)))}{k_\ast(\phi_n(t), \gamma_\ast(\phi_n(t)))} & \quad & \text{ for } t \geq t_\ast, \\
\phi_n(t_\ast) & = t_\ast.
\end{aligned}
\right.
\]
Let $\gamma_n\colon \mathbb R_+ \to X$ be defined by $\gamma_n(t) = x_\ast$ for $t \in [0, t_\ast]$ and $\gamma_n(t) = \gamma_\ast(\phi_n(t))$ for $t \in [t_\ast, +\infty)$. By construction, we have $\gamma_n \in \Adm(Q_n)$ and $t_\ast + \tau(t_\ast, \gamma_\ast) = \phi_n(t_\ast + \tau(t_\ast, \gamma_n))$.

We now prove that $\phi_n(t) \to t$ as $n \to +\infty$, uniformly for $t$ on a compact interval of $[t_\ast, +\infty)$. To do so, we proceed as in the proof of Lemma~\ref{lemm-varphi-Lipschitz}: for every $t \in [t_\ast, +\infty)$, we have
\[
\int_{t_\ast}^{\phi_n(t)} k_\ast(s, \gamma_\ast(s)) \diff s = \int_{t_\ast}^t k_n(s, \gamma_\ast(\phi_n(s))) \diff s.
\]
Defining $H\colon [t_\ast, +\infty) \to [t_\ast, +\infty)$ for $\theta \in [t_\ast, +\infty)$ by
\[
H(\theta) = \int_{t_\ast}^\theta k_\ast(s, \gamma_\ast(s)) \diff s,
\]
we then have the estimate, for every $t \in [t_\ast, +\infty)$,
\begin{align*}
\abs*{\phi_n(t) - t} & = \abs*{H^{-1}\left(\int_{t_\ast}^t k_n(s, \gamma_\ast(\phi_n(s))) \diff s\right) - H^{-1}\left(\int_{t_\ast}^t k_\ast(s, \gamma_\ast(s)) \diff s\right)} \\
& \leq \frac{1}{K_{\min}} \int_{t_\ast}^t \abs*{k_n(s, \gamma_\ast(\phi_n(s))) - k_\ast(s, \gamma_\ast(s))} \diff s \\
& \leq \frac{1}{K_{\min}} \int_{t_\ast}^t \abs*{k_n(s, \gamma_\ast(\phi_n(s))) - k_n(s, \gamma_\ast(s))} \diff s \\
& \hphantom{{} \leq {}} + \frac{1}{K_{\min}} \int_{t_\ast}^t \abs*{k_n(s, \gamma_\ast(s)) - k_\ast(s, \gamma_\ast(s))} \diff s \\
& \leq \frac{L K_{\max}}{K_{\min}} \int_{t_\ast}^t \abs*{\phi_n(s) - s} \diff s + \frac{1}{K_{\min}} \int_{t_\ast}^t \abs*{k_n(s, \gamma_\ast(s)) - k_\ast(s, \gamma_\ast(s))} \diff s,
\end{align*}
which yields, by Grönwall's inequality, that, for every $t \in [t_\ast, +\infty)$,
\[
\abs{\phi_n(t) - t} \leq \frac{1}{K_{\min}} e^{\frac{L K_{\max}}{K_{\min}} (t - t_\ast)} \int_{t_\ast}^t \abs*{k_n(s, \gamma_\ast(s)) - k_\ast(s, \gamma_\ast(s))} \diff s.
\]
Since $k_n(s, \gamma_\ast(s)) \to k_\ast(s, \gamma_\ast(s))$ for every $s \in [t_\ast, t]$ and $k_n$, $n \in \mathbb N$, and $k_\ast$ are positive functions upper bounded by $K_{\max}$, it follows from Lebesgue's dominated convergence theorem that the integral in the right-hand side of the above inequality converges to $0$ as $n \to +\infty$, and such a convergence can be made uniformly in $t$ for $t$ on a compact interval of $[t_\ast, +\infty)$. Hence, $\phi_n(t) \to t$ as $n \to +\infty$, uniformly in $t$ for $t$ on a compact interval of $[t_\ast, +\infty)$, as required. Such a convergence also implies that $\phi_n^{-1}(t) \to t$ as $n \to +\infty$ uniformly in $t$ for $t$ on a compact interval of $[t_\ast, +\infty)$, since $\abs*{\phi_n^{-1}(t) - t} \leq \frac{K_{\max}}{K_{\min}} \abs{t - \phi_n(t)}$, using the fact that $\phi_n^{-1}$ is $\frac{K_{\max}}{K_{\min}}$-Lipschitz continuous.

Now, recalling that $t_\ast + \tau(t_\ast, \gamma_\ast) = \phi_n(t_\ast + \tau(t_\ast, \gamma_n))$, we have $\tau(t_\ast, \gamma_n) = \phi_n^{-1}(t_\ast + \tau(t_\ast, \gamma_\ast)) - t_\ast \to \tau(t_\ast, \gamma_\ast)$ as $n \to +\infty$. Thus, $G(t_\ast, \gamma_n) = g(\gamma_n(t_\ast + \tau(t_\ast, \gamma_n))) \to G(t_\ast, \gamma_\ast)$ as $n \to +\infty$. As a conclusion,
\[
\limsup_{n \to +\infty} \varphi_n(t_\ast, x_\ast) \leq \lim_{n \to +\infty} \left[\tau(t_\ast, \gamma_n) + G(t_\ast, \gamma_n)\right] = \tau(t_\ast, \gamma_\ast) + G(t_\ast, \gamma_\ast) = \varphi_\ast(t_\ast, x_\ast). \qedhere
\]
\end{proof}

One of the consequences of Proposition~\ref{PropValueFunctionContinuous} is the following property of the graph of the set-valued map $\OOpt$ defined in \eqref{eq-OOpt}.

\begin{proposition}
\label{prop-OOpt-closed-graph}
Consider the mean field game $\MFG(X, \Gamma, K, g, m_0)$, assume that \ref{Hypo-X-SigmaCompact}--\ref{Hypo-X-dist} and \ref{HypoMFG-K-Bound}--\ref{HypoMFG-g-compatible} are satisfied, and let $\OOpt\colon \mathcal P(\mathcal C_X) \rightrightarrows \mathcal C_X$ be the set-valued map defined for $Q \in \mathcal P(\mathcal C_X)$ by \eqref{eq-OOpt}. Then $\OOpt$ has closed graph.
\end{proposition}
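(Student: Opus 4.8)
The plan is to verify sequential closedness of the graph directly. Let $(Q_n, \gamma_n)_{n \in \mathbb N}$ be a sequence with $\gamma_n \in \OOpt(Q_n)$ for every $n$, converging in $\mathcal P(\mathcal C_X) \times \mathcal C_X$ to some $(Q, \gamma)$; writing $x_n = \gamma_n(0)$ and $x = \gamma(0)$, one has $x_n \to x$ and, by \eqref{eq-OOpt}, $\gamma_n \in \Opt(Q_n, g, 0, x_n)$, so the goal is to show $\gamma \in \Opt(Q, g, 0, x)$, i.e.\ to establish the items of Definition~\ref{DefiOCP}\ref{DefiOCPOptimalTraj} with $t_0 = 0$. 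First I would fix $R > 0$ with $x_n, x \in \overline B_X(\mathbf 0, R)$ for all $n$; by Corollary~\ref{coro-k-from-K} the maps $k_{Q_n}$ and $k_Q$ all satisfy \ref{HypoOCP-k-Bound}--\ref{HypoOCP-k-Lip} with the same constants, so Proposition~\ref{prop-T-psi} produces functions $T, \psi$ independent of $n$ with $\gamma_n(t) \in \overline B_X(\mathbf 0, \psi(R))$ for all $t$ and $\tau(0, \gamma_n) \leq \varphi_{Q_n}(0, x_n) \leq T(R)$. Passing to a subsequence (harmless, since the conclusion concerns the fixed curve $\gamma$), I may assume $\tau(0, \gamma_n) \to \tau_\ast$ for some $\tau_\ast \in [0, T(R)]$.

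Next I would check $\gamma \in \Adm(k_Q)$. By Lemma~\ref{lemm-etQ-continuous} and continuity of $K$, $k_{Q_n}(t, y) \to k_Q(t, y)$ for every $(t, y)$, and all these functions are $L_{\psi(R)}$-Lipschitz in their second variable on $\overline B_X(\mathbf 0, \psi(R))$, uniformly in $n$ and in time. Arguing exactly as in the proof of Proposition~\ref{prop-adm-closed}: starting from $\dist(\gamma_n(s), \gamma_n(t)) \leq \int_s^t k_{Q_n}(r, \gamma_n(r)) \diff r$, one splits $k_{Q_n}(r, \gamma_n(r)) - k_Q(r, \gamma(r))$ into a Lipschitz part bounded by $L_{\psi(R)}\dist(\gamma_n(r), \gamma(r))$, which tends to $0$ uniformly on compact time intervals, and the term $k_{Q_n}(r, \gamma(r)) - k_Q(r, \gamma(r))$, which tends to $0$ pointwise and hence in $L^1$ of any bounded interval by dominated convergence (everything being bounded by $K_{\max}$); passing to the limit yields $\dist(\gamma(s), \gamma(t)) \leq \int_s^t k_Q(r, \gamma(r)) \diff r$ for all $s \leq t$, whence $\gamma \in \Adm(k_Q)$ by \cite[Theorem~1.1.2]{Ambrosio2005Gradient}. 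In particular $\gamma \in \Lip_{K_{\max}}(X)$, and with $t_0 = 0$ the requirement ``$\gamma(t) = x$ on $[0, t_0]$'' is automatic.

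Then I would treat the exit time. Since $\gamma_n(\tau(0, \gamma_n)) \in \Gamma$, the curves are $K_{\max}$-Lipschitz, and $\tau(0, \gamma_n) \to \tau_\ast$, one gets $\gamma_n(\tau(0, \gamma_n)) \to \gamma(\tau_\ast)$, so $\gamma(\tau_\ast) \in \Gamma$ ($\Gamma$ closed) and $\tau(0, \gamma) \leq \tau_\ast < +\infty$; moreover, for $t > \tau_\ast$ one has $t > \tau(0, \gamma_n)$ eventually, hence $\gamma_n(t) = \gamma_n(\tau(0, \gamma_n)) \to \gamma(\tau_\ast)$, showing that $\gamma$ is constant on $[\tau_\ast, +\infty)$ with value $\gamma(\tau_\ast) \in \Gamma$. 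From $G(0, \gamma_n) = g(\gamma_n(\tau(0, \gamma_n))) \to g(\gamma(\tau_\ast))$ and Proposition~\ref{PropValueFunctionContinuous} (giving $\tau(0, \gamma_n) + G(0, \gamma_n) = \varphi_{Q_n}(0, x_n) \to \varphi_Q(0, x)$) I obtain $\varphi_Q(0, x) = \tau_\ast + g(\gamma(\tau_\ast))$. To reach the normal form of Definition~\ref{DefiOCP}\ref{DefiOCPOptimalTraj}, suppose $\tau(0, \gamma) < \tau_\ast$: since $\gamma(\tau(0, \gamma)), \gamma(\tau_\ast) \in \Gamma$, $\gamma$ is $K_{\max}$-Lipschitz, and $L_g K_{\max} < 1$, Lemma~\ref{LemmTimePlusGIncreases} gives $\tau(0, \gamma) + g(\gamma(\tau(0, \gamma))) < \tau_\ast + g(\gamma(\tau_\ast)) = \varphi_Q(0, x)$, contradicting $\varphi_Q(0, x) \leq \tau(0, \gamma) + G(0, \gamma)$ (valid since $\gamma \in \Adm(k_Q)$ and $\gamma(0) = x$). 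Hence $\tau(0, \gamma) = \tau_\ast$, so $\gamma$ is constant on $[\tau(0, \gamma), +\infty)$ with value in $\Gamma$ and $\tau(0, \gamma) + G(0, \gamma) = \varphi_Q(0, x)$, i.e.\ \eqref{EqMinimalTime} holds; thus $\gamma \in \Opt(Q, g, 0, x) \subset \OOpt(Q)$.

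The step I expect to be the main obstacle is the first one — passing to the limit in the admissibility inequality when the velocity bounds $k_{Q_n}$ themselves vary with $n$ — but this is essentially the argument already carried out in Proposition~\ref{prop-adm-closed} and in the lower-semicontinuity part of the proof of Proposition~\ref{PropValueFunctionContinuous}, made uniform via Corollary~\ref{coro-k-from-K}. The remaining steps are a bookkeeping assembly of Propositions~\ref{prop-T-psi} and \ref{PropValueFunctionContinuous} together with Lemma~\ref{LemmTimePlusGIncreases}.
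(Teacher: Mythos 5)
Your proof is correct and follows essentially the same route as the paper's: direct verification of sequential closedness, uniform bounds via Corollary~\ref{coro-k-from-K} and Proposition~\ref{prop-T-psi}, admissibility of the limit curve by the arguments of Propositions~\ref{prop-adm-closed} and \ref{PropValueFunctionContinuous}, and the joint continuity of $(t, x, Q) \mapsto \varphi_Q(t, x)$. The one place where you genuinely diverge is the final step. The paper obtains $\tau(0,\gamma) + G(0,\gamma) \leq \varphi_Q(0,\gamma(0))$ in one line from Lemma~\ref{lemm-tau-plus-G-lsc} together with Proposition~\ref{PropValueFunctionContinuous}, and then concludes \emph{``up to modifying $\gamma$ on the interval $[\tau(0,\gamma), +\infty)$ in order for it to be constant there.''} You instead track the limit $\tau_\ast$ of the exit times explicitly, show that the limit curve $\gamma$ is already constant on $[\tau_\ast, +\infty)$ with value in $\Gamma$, and use Lemma~\ref{LemmTimePlusGIncreases} to force $\tau(0,\gamma) = \tau_\ast$, so that $\gamma$ itself satisfies all the requirements of Definition~\ref{DefiOCP}\ref{DefiOCPOptimalTraj}. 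Your version costs a few more lines (and a harmless subsequence extraction) but is the more careful one for a closed-graph statement: the conclusion must hold for the limit curve $\gamma$ itself, not for a modification of it, and your argument shows that no modification is in fact needed. You could shorten the intermediate computation by citing Lemma~\ref{lemm-tau-plus-G-lsc} as the paper does, but the constancy analysis you added should be kept.
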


\begin{proof}
Let $(Q_n)_{n \in \mathbb N}$ be a sequence in $\mathcal P(\mathcal C_X)$ with $Q_n \to Q$ as $n \to +\infty$ for some $Q \in \mathcal P(\mathcal C_X)$ and $(\gamma_n)_{n \in \mathbb N}$ be a sequence in $\mathcal C_X$ with $\gamma_n \in \OOpt(Q_n)$ for every $n \in \mathbb N$ and $\gamma_n \to \gamma$ as $n \to +\infty$ for some $\gamma \in \mathcal C_X$. For every $n \in \mathbb N$, since $\gamma_n \in \OOpt(Q_n) \subset \Adm(Q_n)$, we have in particular that $\gamma_n \in \Lip_{K_{\max}}(X)$, and thus $\gamma \in \Lip_{K_{\max}}(X)$. Proceeding as in the proof of Propositions~\ref{prop-adm-closed} and \ref{PropValueFunctionContinuous}, we obtain that $\gamma \in \Adm(Q)$. Using Lemma~\ref{lemm-tau-plus-G-lsc} and Proposition~\ref{PropValueFunctionContinuous}, we have that
\[
\tau(0, \gamma) + G(0, \gamma) \leq \liminf_{n \to +\infty} \tau(0, \gamma_n) + G(0, \gamma_n) = \liminf_{n \to +\infty} \varphi_{Q_n}(0, \gamma_n(0)) = \varphi_Q(0, \gamma(0)),
\]
and thus, up to modifying $\gamma$ on the interval $[\tau(0, \gamma), +\infty)$ in order for it to be constant there, we deduce that $\gamma \in \Opt(Q, g, 0, \gamma(0))$, thus $\gamma \in \OOpt(Q)$, as required.
\end{proof}

As a consequence of Proposition~\ref{prop-OOpt-closed-graph}, we deduce that the set-valued map $(Q, x) \mapsto \Opt(Q, g, 0, x)$ also has closed graph. This property was shown in \cite[Lemma~4.5]{Dweik2020Sharp} for the mean field game model considered in that reference, using a direct proof relying neither on the closedness of the graph of $\OOpt$ nor on the continuity of $(Q, t, x) \mapsto \varphi_Q(t, x)$. Thanks to our previous results, we provide here a much simpler proof.

\begin{corollary}
\label{coro-Opt-closed-graph}
Consider the mean field game $\MFG(X, \Gamma, K, g, m_0)$ and assume that \ref{Hypo-X-SigmaCompact}--\ref{Hypo-X-dist} and \ref{HypoMFG-K-Bound}--\ref{HypoMFG-g-compatible} are satisfied. Then the set-valued map $\Opt(\cdot, g, 0, \cdot)\colon\allowbreak \mathcal P(\mathcal C_X) \times X \rightrightarrows \mathcal C_X$ has closed graph.
\end{corollary}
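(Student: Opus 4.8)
The plan is to deduce the statement almost immediately from Proposition~\ref{prop-OOpt-closed-graph}. Let $(Q_n, x_n, \gamma_n)_{n \in \mathbb N}$ be a sequence in the graph of $\Opt(\cdot, g, 0, \cdot)$, that is, $\gamma_n \in \Opt(Q_n, g, 0, x_n)$ for every $n$, and assume $Q_n \to Q$ in $\mathcal P(\mathcal C_X)$, $x_n \to x$ in $X$, and $\gamma_n \to \gamma$ in $\mathcal C_X$; we must show $\gamma \in \Opt(Q, g, 0, x)$. First I would record two consequences of $\gamma_n \in \Opt(Q_n, g, 0, x_n)$: by Definition~\ref{DefiOCP}\ref{DefiOCPOptimalTraj} applied with $t_0 = 0$, the requirement that $\gamma_n$ be constant on $[0, t_0]$ forces $\gamma_n(0) = x_n$; and by the definition \eqref{eq-OOpt} of $\OOpt$, we have $\gamma_n \in \OOpt(Q_n)$.

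Passing to the limit: since convergence in $\mathcal C_X$ is uniform on compact sets, in particular $\gamma_n(0) \to \gamma(0)$, so $\gamma(0) = \lim_n x_n = x$. Applying Proposition~\ref{prop-OOpt-closed-graph}, which asserts that $\OOpt$ has closed graph, to the convergent data $Q_n \to Q$ and $\gamma_n \to \gamma$ with $\gamma_n \in \OOpt(Q_n)$, we obtain $\gamma \in \OOpt(Q)$. It then remains to observe the elementary bookkeeping fact that every element of $\OOpt(Q)$ lies in the fiber of $\Opt(Q, g, 0, \cdot)$ over its own initial point: if $\gamma \in \Opt(Q, g, 0, y)$ for some $y \in X$, then again by Definition~\ref{DefiOCP}\ref{DefiOCPOptimalTraj} we have $\gamma(0) = y$, hence $\gamma \in \Opt(Q, g, 0, \gamma(0))$. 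Combining this with $\gamma(0) = x$ gives $\gamma \in \Opt(Q, g, 0, x)$, so $(Q, x, \gamma)$ belongs to the graph, as required.

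There is essentially no obstacle at this stage: the substantial work has already been carried out in Proposition~\ref{prop-OOpt-closed-graph} (which in turn relies on the continuity of $(t, x, Q) \mapsto \varphi_Q(t, x)$ from Proposition~\ref{PropValueFunctionContinuous} and the lower semicontinuity from Lemma~\ref{lemm-tau-plus-G-lsc}). The only minor point requiring care is the identification of $\OOpt(Q)$ with the union of the fibers $\Opt(Q, g, 0, y)$, $y \in X$, which amounts to noting that an optimal trajectory is constant on $[0, 0] = \{0\}$ and therefore ``remembers'' its starting point; this is immediate from the definition and is exactly what lets us upgrade ``$\gamma \in \OOpt(Q)$'' to ``$\gamma \in \Opt(Q, g, 0, x)$'' once we know $\gamma(0) = x$.
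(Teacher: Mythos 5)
Your proposal is correct and follows exactly the paper's argument: reduce to the closedness of the graph of $\OOpt$ from Proposition~\ref{prop-OOpt-closed-graph}, then use $\gamma(0) = \lim_{n \to +\infty} \gamma_n(0) = \lim_{n \to +\infty} x_n = x$ to identify the fiber. The extra bookkeeping you spell out (that an element of $\OOpt(Q)$ lies in $\Opt(Q, g, 0, \gamma(0))$) is implicit in the paper's one-line proof but adds nothing different in substance.
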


\begin{proof}
Note that, if $(Q_n, x_n, \gamma_n)_{n \in \mathbb N}$ is a sequence in $\mathcal P(\mathcal C_X) \times X \times \mathcal C_X$ converging to some element $(Q, x, \gamma) \in \mathcal P(\mathcal C_X) \times X \times \mathcal C_X$ and with $\gamma_n \in \Opt(Q_n, g, 0, x_n)$ for every $n \in \mathbb N$, then $\gamma_n \in \OOpt(Q_n)$ for every $n \in \mathbb N$ and, by Proposition~\ref{prop-OOpt-closed-graph}, we have $\gamma \in \OOpt(Q)$. Since $\gamma(0) = \lim_{n \to +\infty} \gamma_n(0) = \lim_{n \to +\infty} x_n = x$, we deduce that $\gamma \in \Opt(Q, g, 0, x)$, as required.
\end{proof}

Another important consequence of Proposition~\ref{prop-OOpt-closed-graph} is that the notions of weak and strong equilibria coincide, a fact already stated in \cite[Remark~4.6]{Dweik2020Sharp} and \cite[Proposition~3.7]{Sadeghi2022Nonsmooth} for the models considered in those references. We deduce it here as a trivial consequence of Proposition~\ref{prop-OOpt-closed-graph}.

\begin{corollary}
\label{coro-strong-iff-weak}
Consider the mean field game $\MFG(X, \Gamma, K, g, m_0)$ and assume that \ref{Hypo-X-SigmaCompact}--\ref{Hypo-X-dist} and \ref{HypoMFG-K-Bound}--\ref{HypoMFG-g-compatible} are satisfied. Let $Q \in \mathcal P(\mathcal C_X)$. Then $Q$ is a weak equilibrium of $\MFG(X, \Gamma, K, g, m_0)$ if and only if it is a strong equilibrium of $\MFG(X, \Gamma, K, g, m_0)$.
\end{corollary}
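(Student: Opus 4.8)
The plan is to split the equivalence into its two implications, only one of which carries any content. The implication from strong to weak equilibrium is immediate and was already recorded in Remark~\ref{remk-equilibria-strong-weak}: if $Q$ is a strong equilibrium then $\supp(Q) \subset \OOpt(Q)$, and since $Q(\supp(Q)) = 1$ we obtain $Q(\OOpt(Q)) = 1$, i.e., $Q$ is a weak equilibrium. So the only thing to prove is that a weak equilibrium is a strong equilibrium.

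Assume $Q$ is a weak equilibrium, so that ${e_0}_\# Q = m_0$ and $Q(\OOpt(Q)) = 1$. The key observation is that $\OOpt(Q)$ is a \emph{closed} subset of $\mathcal C_X$. This follows from Proposition~\ref{prop-OOpt-closed-graph}: the set-valued map $\OOpt\colon \mathcal P(\mathcal C_X) \rightrightarrows \mathcal C_X$ has closed graph, i.e., the set $\{(\widetilde Q, \gamma) \suchthat \gamma \in \OOpt(\widetilde Q)\}$ is closed in $\mathcal P(\mathcal C_X) \times \mathcal C_X$; since $\gamma \mapsto (Q, \gamma)$ is continuous from $\mathcal C_X$ into $\mathcal P(\mathcal C_X) \times \mathcal C_X$, the preimage of this closed set, which is exactly $\OOpt(Q)$, is closed in $\mathcal C_X$.

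To conclude, recall that $\supp(Q)$ is by definition the smallest closed subset of $\mathcal C_X$ of full $Q$-measure; combining $Q(\OOpt(Q)) = 1$ with the closedness of $\OOpt(Q)$ therefore yields $\supp(Q) \subset \OOpt(Q)$. This says precisely that every $\gamma \in \supp(Q)$ is an optimal curve for $(k, g, 0, \gamma(0))$, where $k(t, x) = K({e_t}_\# Q, x)$, and together with ${e_0}_\# Q = m_0$ this is the definition of $Q$ being a strong equilibrium. There is no genuine obstacle here: the statement is a short packaging of Proposition~\ref{prop-OOpt-closed-graph}, and the only point requiring a moment's care is the passage from closedness of the graph of the set-valued map to closedness of the individual fiber $\OOpt(Q)$.
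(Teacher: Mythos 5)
Your proof is correct and follows essentially the same route as the paper: both reduce the statement to the closedness of $\OOpt(Q)$, obtained from Proposition~\ref{prop-OOpt-closed-graph}, and then conclude $\supp(Q) \subset \OOpt(Q)$ (the paper via a sequence in $\OOpt(Q)$ converging to a given $\gamma \in \supp(Q)$, you via the characterization of the support as the smallest closed set of full measure, which is valid since $\mathcal C_X$ is Polish). The two formulations of this last step are interchangeable, so there is nothing substantive to distinguish.
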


\begin{proof}
Recall that, as stated in Remark~\ref{remk-equilibria-strong-weak}, any strong equilibrium of $\MFG(X, \Gamma, K, g, m_0)$ is also a weak equilibrium. To prove the converse implication, let $Q \in \mathcal P(\mathcal C_X)$ be a weak equilibrium of $\MFG(X, \Gamma, K, g, m_0)$. Take $\gamma \in \supp(Q)$. Then, by definition of support and using the fact that $Q(\OOpt(Q)) = 1$, there exists a sequence $(\gamma_n)_{n \in \mathbb N}$ in $\OOpt(Q)$ such that $\gamma_n \to \gamma$ as $n \to +\infty$. It follows from Proposition~\ref{prop-OOpt-closed-graph} that $\OOpt(Q)$ is closed, hence $\gamma \in \OOpt(Q)$. Thus $\supp(Q) \subset \OOpt(Q)$, concluding the proof.
\end{proof}

Finally, another consequence that we can get from from Proposition~\ref{prop-OOpt-closed-graph} is that, given $Q \in \mathcal P(\mathcal C_X)$, one can find a Borel-measurable function that, with each $x \in X$, associates an optimal trajectory $\gamma \in \Opt(Q, g, 0, x)$. A similar result was stated in \cite[Proposition~5.3]{Mazanti2019Minimal}, but with a weaker conclusion, asserting measurability only with respect to a $\sigma$-algebra larger than that of Borel in $X$. We provide here more detailed arguments that allow one to obtain Borel measurability, based on the ideas given in the proof of \cite[Lemma~4.7]{Dweik2020Sharp} and adapting them to the case of a space $X$ not necessarily compact.

\begin{corollary}
\label{coro-measurable-selection}
Consider the mean field game $\MFG(X, \Gamma, K, g, m_0)$ and assume that \ref{Hypo-X-SigmaCompact}--\ref{Hypo-X-dist} and \ref{HypoMFG-K-Bound}--\ref{HypoMFG-g-compatible} are satisfied. For every $Q \in \mathcal P(\mathcal C_X)$, there exists a Borel-measurable function $\Phi\colon X \to \OOpt(Q)$ such that $\Phi(x) \in \Opt(Q, g, 0, x)$ for every $x \in X$.
\end{corollary}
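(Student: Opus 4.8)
The plan is to recast the statement as a measurable selection problem for an \emph{upper semicontinuous} set-valued map taking values in a \emph{compact} metric space, solve it on each ball $\overline B_X(\mathbf 0, n)$, and then glue the resulting selections using the $\sigma$-compactness of $X$. First I would fix $Q \in \mathcal P(\mathcal C_X)$, let $k_Q(t,x) = K({e_t}_{\#}Q, x)$, and let $\psi$ be the nondecreasing function produced by Proposition~\ref{prop-T-psi} applied to $\OCP(X, \Gamma, k_Q, g)$. For each $n \in \mathbb N^\ast$ set $\mathcal K_n = \Lip_{K_{\max}}\bigl(\overline B_X(\mathbf 0, \psi(n))\bigr)$, which is a compact subset of $\mathcal C_X$ by \ref{Hypo-X-SigmaCompact} and the Arzel\`a--Ascoli theorem. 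Writing $F_Q := \Opt(Q, g, 0, \cdot)$, I would record three facts: $F_Q(x) \neq \emptyset$ for every $x \in X$ by Proposition~\ref{PropExistOpt}; $F_Q(x) \subset \mathcal K_n$ whenever $x \in \overline B_X(\mathbf 0, n)$, by Proposition~\ref{prop-T-psi} together with the inclusion $\Adm(k_Q) \subset \Lip_{K_{\max}}(X)$; and, for each fixed $x$, $F_Q(x)$ is closed in $\mathcal C_X$, being a slice of the closed graph given by Corollary~\ref{coro-Opt-closed-graph}. Hence $F_Q(x)$ is a nonempty closed, thus compact, subset of $\mathcal K_n$ for $x \in \overline B_X(\mathbf 0, n)$, and the restriction of $F_Q$ to $\overline B_X(\mathbf 0, n)$, viewed as a set-valued map into the compact space $\mathcal K_n$, has closed graph (Corollary~\ref{coro-Opt-closed-graph}), and is therefore upper semicontinuous with nonempty compact values.

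Next I would invoke a classical measurable selection theorem. An upper semicontinuous set-valued map with nonempty closed values from a metric space into a Polish space is weakly measurable — for open $U$ one writes $U$ as an increasing union of closed sets to see that $\{x : F_Q(x) \cap U \neq \emptyset\}$ is an $F_\sigma$ — so the Kuratowski--Ryll-Nardzewski selection theorem yields, for each $n \in \mathbb N^\ast$, a Borel-measurable map $\Phi_n \colon \overline B_X(\mathbf 0, n) \to \mathcal K_n$ with $\Phi_n(x) \in \Opt(Q, g, 0, x)$ for all $x \in \overline B_X(\mathbf 0, n)$. Since $\mathcal K_n$ is a compact, hence Borel, subset of $\mathcal C_X$, $\Phi_n$ is also Borel-measurable as a map into $\mathcal C_X$.

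Finally I would patch these selections together. By \ref{Hypo-X-SigmaCompact} the sets $A_1 = \overline B_X(\mathbf 0, 1)$ and $A_n = \overline B_X(\mathbf 0, n) \setminus \overline B_X(\mathbf 0, n-1)$ for $n \geq 2$ form a countable Borel partition of $X$. Define $\Phi \colon X \to \mathcal C_X$ by $\Phi(x) = \Phi_n(x)$ for $x \in A_n$. Each restriction $\Phi|_{A_n}$ is Borel-measurable and each $A_n$ is Borel, so $\Phi$ is Borel-measurable; moreover $\Phi(x) \in \Opt(Q, g, 0, x) \subset \OOpt(Q)$ for every $x$, and since $\OOpt(Q)$ is closed in $\mathcal C_X$ by Proposition~\ref{prop-OOpt-closed-graph}, regarding $\Phi$ as a map into $\OOpt(Q)$ does not alter its Borel measurability. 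This is the required $\Phi$.

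The main obstacle is the passage from the abstract framework to the concrete one: verifying that the closed-graph property of $\Opt(\cdot, g, 0, \cdot)$ upgrades to upper semicontinuity, which is precisely where the a priori compactness furnished by Proposition~\ref{prop-T-psi} and \ref{Hypo-X-SigmaCompact} is indispensable, since $\mathcal C_X$ itself is not locally compact; and then checking that upper semicontinuity supplies the weak measurability needed by Kuratowski--Ryll-Nardzewski. The remaining points — nonemptiness and compactness of the values, and the gluing via $\sigma$-compactness — are routine. If one wishes to bypass the general selection theorem, the maps $\Phi_n$ can instead be constructed directly as uniform limits of finitely-valued approximate selections of $F_Q$ on $\overline B_X(\mathbf 0, n)$, exploiting the compactness of $\mathcal K_n$, following the ideas in the proof of \cite[Lemma~4.7]{Dweik2020Sharp}.
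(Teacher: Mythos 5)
Your proof is correct and follows essentially the same route as the paper's: both rest on nonemptiness of the values (Proposition~\ref{PropExistOpt}), the closed graph of $\Opt(Q, g, 0, \cdot)$ (Corollary~\ref{coro-Opt-closed-graph}), the a priori compactness supplied by Proposition~\ref{prop-T-psi} to upgrade the closed graph to upper semicontinuity, and finally a measurable selection theorem (the paper invokes \cite[Theorem~8.1.3]{Aubin2009Set} where you invoke Kuratowski--Ryll-Nardzewski, which is the same tool, with the weak measurability obtained from upper semicontinuity exactly as in the paper via \cite[Proposition~1.4.4]{Aubin2009Set}). The only packaging difference is that the paper proves upper semicontinuity globally on $X$ by a direct sequential compactness argument and selects once, whereas you restrict the codomain to the compact sets $\Lip_{K_{\max}}\bigl(\overline B_X(\mathbf 0, \psi(n))\bigr)$ over the balls $\overline B_X(\mathbf 0, n)$ and then glue along a countable Borel partition; both are valid.
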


\begin{proof}
Take $Q \in \mathcal P(\mathcal C_X)$ and let $k\colon \mathbb R_+ \times X \to \mathbb R_+$ be defined by $k(t, x) = K({e_t}_{\#} Q, x)$. Let $\psi\colon \mathbb R_+ \to \mathbb R_+$ be the function obtained by applying Proposition~\ref{prop-T-psi} to $\OCP(X, \Gamma, k, g)$.

Let $\pmb\Phi\colon X \rightrightarrows \OOpt(Q)$ be the set-valued function defined for $x \in X$ by $\pmb\Phi(x) = \Opt(Q, g, 0, x)$ and note that, as an immediate consequence of Corollary~\ref{coro-Opt-closed-graph}, $\pmb\Phi$ has closed graph. In addition, by Proposition~\ref{PropExistOpt}, $\pmb\Phi(x) \neq \emptyset$ for every $x \in X$.

We claim that $\pmb\Phi$ is upper semicontinuous. Indeed, if it were not the case, there would exist $x \in X$, an open set $U$ containing $\pmb\Phi(x)$ in $\mathcal C_X$, a sequence $(x_n)_{n \in \mathbb N}$ in $X$ converging to $x$, and a sequence $(\gamma_n)_{n \in \mathbb N}$ in $\mathcal C_X$ such that $\gamma_n \in \pmb\Phi(x_n) = \Opt(Q, g, 0, x_n)$ for every $n \in \mathbb N$ and $\gamma_n \notin U$. Note that, in particular, $\gamma_n \in \Lip_{K_{\max}}(X)$.

Let $R > 0$ be such that $x_n \in \overline B_X(\mathbf 0, R)$ for every $n \in \mathbb N$ and $x \in \overline B_X(\mathbf 0, R)$. Then, by Proposition~\ref{prop-T-psi}, we have $\gamma_n(t) \in \overline B_X(\mathbf 0, \psi(R))$ for every $n \in \mathbb N$ and $t \geq 0$, i.e., $(\gamma_n)_{n \in \mathbb N}$ is a sequence in $\Lip_{K_{\max}}(\overline B_X(\mathbf 0, \psi(R)))$. Since the latter set is compact, up to extracting a subsequence, there exists $\gamma \in \Lip_{K_{\max}}(\overline B_X(\mathbf 0, \psi(R)))$ such that $\gamma_n \to \gamma$ as $n \to +\infty$. Hence, the sequence $(Q, x_n, \gamma_n)_{n \in \mathbb N}$ converges to $(Q, x, \gamma)$ and satisfies $\gamma_n \in \Opt(Q, g, 0, x_n)$ for every $n \in \mathbb N$, showing, by Corollary~\ref{coro-Opt-closed-graph}, that $\gamma \in \Opt(Q, g, 0, x) \subset U$. Since $\gamma_n \to \gamma$ as $n \to +\infty$ and $U$ is open, we deduce that $\gamma_n \in U$ for $n$ large enough, which contradicts the fact that $\gamma_n \notin U$ for every $n \in \mathbb N$. This contradiction establishes the fact that $\pmb\Phi$ is upper semicontinuous.

Since $\pmb\Phi$ is upper semicontinuous, by \cite[Proposition~1.4.4]{Aubin2009Set}, the set $\pmb\Phi^{-1}(A) = \{x \in X \suchthat \pmb\Phi(x) \cap A \neq \emptyset\}$ is closed for every closed subset $A$ of $\mathcal C_X$. Hence, by \cite[Proposition~III.11]{Castaing1977Convex}, the set $\pmb\Phi^{-1}(B)$ is open for every open subset $B$ of $\mathcal C_X$. Thus, $\pmb\Phi$ is measurable, and we obtain the desired function $\Phi$ as a Borel-measurable selection of $\pmb\Phi$, which exists thanks to \cite[Theorem~8.1.3]{Aubin2009Set}.
\end{proof}

\section{Main results}
\label{sec:main}

Thanks to the preliminary results from Section~\ref{sec-prelim}, we are now in position to state and prove the main results of this paper. In all the discussions in this section, we assume that \ref{Hypo-X-SigmaCompact}--\ref{Hypo-X-dist} and \ref{HypoMFG-K-Bound}--\ref{HypoMFG-g-compatible} are satisfied and, in particular, by Corollary~\ref{coro-strong-iff-weak}, the notions of strong and weak equilibria for $\MFG(X, \Gamma, K, g, m_0)$ coincide. For that reason, we refer in the sequel to a strong or weak equilibrium simply as \emph{equilibrium}.

\subsection{Existence of equilibria}
\label{sec:existence}

As in \cite{Mazanti2019Minimal, Dweik2020Sharp, Sadeghi2021Characterization, Sadeghi2022Multi, Sadeghi2022Nonsmooth}, our strategy to prove existence of equilibria consists in recasting the notion of equilibrium in terms of fixed points of a set-valued map and then applying to the latter a suitable fixed-point theorem which, in our case, will be Kakutani fixed-point theorem (see, e.g., \cite[\S~7, Theorem~8.6]{Granas2003Fixed}).

Let $\OOpt$ and $\mathfrak Q$ be defined as in \eqref{eq-OOpt} and \eqref{eq-defi-Q}, respectively. We define the set-valued map $F\colon \mathfrak Q \rightrightarrows \mathfrak Q$ by setting, for $Q \in \mathfrak Q$,
\begin{equation}
\label{eq-defi-F}
F(Q) = \left\{\widetilde Q \in \mathfrak Q \suchthat \widetilde Q(\OOpt(Q)) = 1\right\}.
\end{equation}
Clearly, $Q$ is an equilibrium of $\MFG(X, \Gamma, K, g, m_0)$ if and only if $Q \in F(Q)$. To apply Kakutani fixed-point theorem to $F$, we first prove the following properties of $F$. These properties were shown in \cite[Lemmas~5.3 and~5.5]{Mazanti2019Minimal}, \cite[Lemma~4.7]{Dweik2020Sharp}, and \cite[Lemma~5.6]{Sadeghi2022Multi} for the mean field game models considered in those papers, and we proceed here by following the line of the proof of \cite[Lemma~5.6]{Sadeghi2022Multi}, adapting arguments to our more general setting when needed.

\begin{lemma}
\label{lemm-F}
Consider the mean field game $\MFG(X, \Gamma, K, g, m_0)$, assume that \ref{Hypo-X-SigmaCompact}--\ref{Hypo-X-dist} and \ref{HypoMFG-K-Bound}--\ref{HypoMFG-g-compatible} are satisfied, and let $F\colon \mathfrak Q \rightrightarrows \mathfrak Q$ be defined as in \eqref{eq-defi-F}. Then $F$ is upper semicontinuous and, for every $Q \in \mathfrak Q$, $F(Q)$ is nonempty, compact, and convex.
\end{lemma}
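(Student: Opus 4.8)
The plan is to check, in order, that $F(Q)$ is convex (immediate), nonempty (via the measurable selection of Corollary~\ref{coro-measurable-selection}), and compact (as a closed subset of the compact set $\mathfrak Q$), and then that $F$ is upper semicontinuous (by showing $F$ has closed graph and invoking the compactness of $\mathfrak Q$). Throughout, I would use that for fixed $Q$ the set $\OOpt(Q)$ is closed, hence Borel, which follows from Proposition~\ref{prop-OOpt-closed-graph} exactly as in the proof of Corollary~\ref{coro-strong-iff-weak}. Convexity is then clear: if $\widetilde Q_0, \widetilde Q_1 \in F(Q)$ and $\lambda \in [0,1]$, then $\lambda \widetilde Q_0 + (1-\lambda)\widetilde Q_1 \in \mathfrak Q$ by convexity of $\mathfrak Q$ (Proposition~\ref{PropQNonemptyConvexCompact}), and it assigns mass $\lambda + (1-\lambda) = 1$ to $\OOpt(Q)$.

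For nonemptiness, I would apply Corollary~\ref{coro-measurable-selection} to obtain a Borel-measurable map $\Phi\colon X \to \OOpt(Q)$ with $\Phi(x) \in \Opt(Q, g, 0, x)$ for every $x \in X$, and set $\widetilde Q = \Phi_{\#} m_0 \in \mathcal P(\mathcal C_X)$. Since $\Phi(x)(0) = x$ we have $e_0 \circ \Phi = \mathrm{id}_X$, hence ${e_0}_{\#}\widetilde Q = m_0$; since $\Phi(X) \subset \OOpt(Q)$ we have $\widetilde Q(\OOpt(Q)) = 1$; and for $R > 0$ and $x \in \overline B_X(\mathbf 0, R)$, Proposition~\ref{prop-T-psi} together with $\Adm(Q) \subset \Lip_{K_{\max}}(X)$ gives $\Phi(x) \in \Lip_{K_{\max}}(\overline B_X(\mathbf 0, \psi(R)))$ for the function $\psi$ of \eqref{eq-defi-Q}, so that $\widetilde Q\bigl(\Lip_{K_{\max}}(\overline B_X(\mathbf 0, \psi(R)))\bigr) \geq m_0(\overline B_X(\mathbf 0, R))$. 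Hence $\widetilde Q \in \mathfrak Q$, and therefore $\widetilde Q \in F(Q)$. For compactness, since $F(Q) \subset \mathfrak Q$ and $\mathfrak Q$ is compact (Proposition~\ref{PropQNonemptyConvexCompact}), it suffices to see $F(Q)$ is closed: if $\widetilde Q_n \in F(Q)$ and $\widetilde Q_n \to \widetilde Q$, then $\widetilde Q \in \mathfrak Q$ and, $\OOpt(Q)$ being closed, the Portmanteau theorem gives $\widetilde Q(\OOpt(Q)) \geq \limsup_n \widetilde Q_n(\OOpt(Q)) = 1$, so $\widetilde Q \in F(Q)$.

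For upper semicontinuity, since $F$ takes values in the compact metrizable set $\mathfrak Q$, it is enough to prove that $F$ has closed graph: if $F$ failed to be upper semicontinuous at some $Q$, a violating sequence $Q_n \to Q$, $\widetilde Q_n \in F(Q_n)$ would admit a subsequence converging in $\mathfrak Q$ to a point lying outside a prescribed open neighbourhood of $F(Q)$, hence outside $F(Q)$, contradicting the closed graph. So let $Q_n \to Q$ and $\widetilde Q_n \to \widetilde Q$ with $\widetilde Q_n \in F(Q_n)$ for all $n$; we already know $\widetilde Q \in \mathfrak Q$, so it remains to check $\widetilde Q(\OOpt(Q)) = 1$. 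Consider $\mu_n = \delta_{Q_n} \otimes \widetilde Q_n \in \mathcal P\bigl(\mathcal P(\mathcal C_X) \times \mathcal C_X\bigr)$; as $\widetilde Q_n$ is concentrated on $\OOpt(Q_n)$, each $\mu_n$ is concentrated on $\mathcal G := \{(Q', \gamma) : \gamma \in \OOpt(Q')\}$, which is closed by Proposition~\ref{prop-OOpt-closed-graph}. Using the tightness of the convergent sequence $(\widetilde Q_n)_n$ and the uniform continuity of bounded continuous functions on sets of the form $(\{Q_n : n \in \mathbb N\} \cup \{Q\}) \times \mathcal K$, with $\mathcal K \subset \mathcal C_X$ compact, one checks that $\mu_n \to \delta_Q \otimes \widetilde Q$ weakly, and the Portmanteau theorem then gives $(\delta_Q \otimes \widetilde Q)(\mathcal G) \geq \limsup_n \mu_n(\mathcal G) = 1$, i.e.\ $\widetilde Q(\OOpt(Q)) = 1$, as required.

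I expect the last step — establishing the closed graph of $F$, that is, propagating the concentration of $\widetilde Q_n$ on $\OOpt(Q_n)$ to concentration of $\widetilde Q$ on $\OOpt(Q)$ under the joint weak convergence $Q_n \to Q$, $\widetilde Q_n \to \widetilde Q$ — to be the main obstacle, the key input being the closed graph of $\OOpt$ from Proposition~\ref{prop-OOpt-closed-graph}; all the remaining parts are routine consequences of Corollary~\ref{coro-measurable-selection} and Proposition~\ref{PropQNonemptyConvexCompact}.
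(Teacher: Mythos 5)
Your proposal is correct, and the first three claims (convexity, nonemptiness via the measurable selection of Corollary~\ref{coro-measurable-selection}, compactness via closedness inside the compact set $\mathfrak Q$ and the Portmanteau theorem) follow the paper's proof essentially verbatim. Where you genuinely diverge is in the closed-graph step. The paper works directly in $\mathcal C_X$: for each $\varepsilon$ it introduces the closed $\varepsilon$-neighbourhood $V_\varepsilon$ of $\OOpt(Q)$, and proves by a compactness-and-contradiction argument (using that $\Lip_{K_{\max}}\bigl(\overline B_X(\mathbf 0, \psi(R_0))\bigr)$ is compact and that $\OOpt$ has closed graph) the inclusion $\Lip_{K_{\max}}\bigl(\overline B_X(\mathbf 0, \psi(R_0))\bigr) \cap \OOpt(Q_n) \subset V_\varepsilon$ for $n$ large; combined with the membership $\widetilde Q_n \in \mathfrak Q$ this gives $\widetilde Q_n(V_\varepsilon) \geq 1 - \varepsilon$, hence $\widetilde Q(V_\varepsilon) \geq 1 - \varepsilon$ by Portmanteau, and one concludes by letting $\varepsilon \to 0$ since $\bigcap_{\varepsilon} V_\varepsilon = \OOpt(Q)$. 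You instead lift the problem to the product space $\mathcal P(\mathcal C_X) \times \mathcal C_X$, observe that $\delta_{Q_n} \otimes \widetilde Q_n$ is concentrated on the closed graph $\mathcal G$ of $\OOpt$, prove the joint weak convergence $\delta_{Q_n} \otimes \widetilde Q_n \to \delta_Q \otimes \widetilde Q$ (a Slutsky-type statement, correctly justified via Prokhorov-tightness of the convergent sequence $(\widetilde Q_n)_n$ and uniform continuity on compacts), and apply Portmanteau once to $\mathcal G$. Both arguments rest on the same key input, Proposition~\ref{prop-OOpt-closed-graph}, but yours is arguably cleaner and slightly more general: it does not use that $\widetilde Q_n \in \mathfrak Q$ to produce the uniform concentration on compacts (tightness comes for free from weak convergence in a Polish space), whereas the paper's route avoids discussing convergence of product measures and stays entirely within $\mathcal C_X$. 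Your reduction of upper semicontinuity to the closed-graph property via compactness of $\mathfrak Q$ is the same as the paper's (which cites \cite[Proposition~1.4.8]{Aubin2009Set}), merely spelled out by hand.
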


\begin{proof}
Let $Q \in \mathfrak Q$. Clearly, by \eqref{eq-defi-F}, the set $F(Q)$ is convex. To prove that it is nonempty, let $\Phi\colon X \to \OOpt(Q)$ be the map from Corollary~\ref{coro-measurable-selection} and set $\widetilde Q = \Phi_{\#} m_0$. By construction, $\widetilde Q \in \mathcal P(\mathcal C_X)$, ${e_0}_{\#} \widetilde Q = m_0$, and $\widetilde Q(\OOpt(Q)) = 1$. Using the latter fact and proceeding as in the proof of Proposition~\ref{prop-equilibrium-psi}, we deduce that $\widetilde Q\Bigl(\Lip_{K_{\max}}\bigl(\overline B_X(\mathbf 0, \psi(R))\bigr)\Bigr) \geq m_0\bigl(\overline B_X(\mathbf 0, R)\bigr)$ for every $R > 0$, and thus $\widetilde Q \in \mathfrak Q$. Hence $\widetilde Q \in F(Q)$, so that $F(Q)$ is nonempty.

Let us now show that $F(Q)$ is compact. Since $F(Q) \subset \mathfrak Q$ and $\mathfrak Q$ is compact by Proposition~\ref{PropQNonemptyConvexCompact}, it suffices to prove that $F(Q)$ is closed. Let $(\widetilde Q_n)_{n \in \mathbb N}$ be a sequence in $F(Q)$ converging as $n \to +\infty$ to some $\widetilde Q \in \mathcal P(\mathcal C_X)$. Since $\widetilde Q_n \in F(Q) \subset \mathfrak Q$ and $\mathfrak Q$ is closed, we deduce that $\widetilde Q \in \mathfrak Q$. Note also that, as a consequence of Proposition~\ref{prop-OOpt-closed-graph}, $\OOpt(Q)$ is closed. Hence, by the Portmanteau theorem, we have
\[
\widetilde Q(\OOpt(Q)) \geq \limsup_{n \to +\infty} \widetilde Q_n(\OOpt(Q)) = 1,
\]
which proves that $\widetilde Q(\OOpt(Q)) = 1$, and thus $\widetilde Q \in F(Q)$. Hence $F(Q)$ is closed.

Let us finally show that $F$ is upper semicontinuous. Since $\mathfrak Q$ is compact, is suffices to show, by \cite[Proposition~1.4.8]{Aubin2009Set}, that the graph of $F$ is closed. Let $(Q_n, \widetilde Q_n)_{n \in \mathbb N}$ be a sequence in $\mathfrak{Q} \times \mathfrak Q$ with $(Q_n, \widetilde Q_n) \to (Q, \widetilde Q)$ for some $(Q, \widetilde Q) \in \mathfrak{Q} \times \mathfrak Q$ and such that $\widetilde Q_n \in F(Q_n)$ for every $n \in \mathbb N$.

For each $n \in \mathbb N$, since $\widetilde Q_n \in F(Q_n)$, we have $\widetilde Q_n(\OOpt(Q_n)) = 1$ and, since $\widetilde Q_n \in \mathfrak Q$, we also have that $\widetilde Q_n\Bigl(\Lip_{K_{\max}}\bigl(\overline B_X(\mathbf 0, \psi(R))\bigr)\Bigr) \geq m_0\bigl(\overline B_X(\mathbf 0, R)\bigr)$ for every $R > 0$, where $\psi$ is the function from Proposition~\ref{prop-equilibrium-psi}.

For every $\varepsilon \in (0, 1)$, let $V_{\varepsilon} = \{\gamma \in \mathcal C_X \suchthat \dist_{\mathcal C_X}(\gamma, \OOpt(Q)) \leq \varepsilon\}$. Let $R_0 > 0$ be such that $m_0\bigl(\overline B_X(\mathbf 0, R_0)\bigr) \geq 1 - \varepsilon$. We claim that there exists $N_{\varepsilon} \in \mathbb N$ such that, for all $n \in \mathbb N$ with $n \geq N_\varepsilon$, we have
\begin{equation}
\label{eq-LipKmax-cap-OOpt-subset-Vepsilon}
\Lip_{K_{\max}}\bigl(\overline B_X(\mathbf 0, \psi(R_0))\bigr) \cap \OOpt(Q_n) \subset V_\varepsilon.
\end{equation}
Indeed, if it were not the case, then there would exist a subsequence of $(Q_n)_{n \in \mathbb N}$, still denoted by $(Q_n)_{n \in \mathbb N}$ for simplicity, and a sequence $(\gamma_n)_{n \in \mathbb N}$ in $\Lip_{K_{\max}}\bigl(\overline B_X(\mathbf 0, \psi(R_0))\bigr)$ such that, for every $n \in \mathbb N$, we have $\gamma_n \in \OOpt(Q_n)$ and $\gamma_n \notin V_\varepsilon$. Since $\Lip_{K_{\max}}\bigl(\overline B_X(\mathbf 0, \psi(R_0))\bigr)$ is compact, up to extracting a subsequence, there exists $\gamma \in \Lip_{K_{\max}}\bigl(\overline B_X(\mathbf 0, \psi(R_0))\bigr)$ such that $\gamma_n \to \gamma$ as $n \to +\infty$. By Proposition~\ref{prop-OOpt-closed-graph}, we also deduce that $\gamma \in \OOpt(Q)$. Since $\gamma_n \notin V_\varepsilon$ for every $n \in \mathbb N$, we have $\dist_{\mathcal C_X}(\gamma_n, \OOpt(Q)) > \varepsilon$ for every $n \in \mathbb N$, thus $\dist_{\mathcal C_X}(\gamma, \OOpt(Q)) \geq \varepsilon$, which contradicts the fact that $\gamma \in \OOpt(Q)$. This contradiction establishes that there exists $N_{\varepsilon} \in \mathbb N$ such that \eqref{eq-LipKmax-cap-OOpt-subset-Vepsilon} holds true for every $n \in \mathbb N$ with $n \geq N_\varepsilon$.

Since $\widetilde Q_n \in \mathfrak Q$ and $\widetilde Q_n(\OOpt(Q_n)) = 1$, we obtain from \eqref{eq-LipKmax-cap-OOpt-subset-Vepsilon} that, for every $n \in \mathbb N$ with $n \geq N_\varepsilon$, we have
\[
\widetilde{Q}_{n}(V_{\varepsilon}) \geq \widetilde{Q}_{n}\Bigl(\Lip_{K_{\max}}\bigl(\overline B_X(\mathbf 0, \psi(R_0))\bigr) \cap \OOpt(Q_n)\Bigr) \geq m_0\bigl(\overline B_X(\mathbf 0, R_0)\bigr) \geq 1 - \varepsilon.
\]
Since $\widetilde{Q}_n \to \widetilde{Q}$ and $V_{\varepsilon}$ is closed, we have from the Portmanteau theorem that $\widetilde{Q}(V_{\varepsilon}) \geq \limsup_{n\to +\infty} \widetilde{Q}_{n}(V_{\varepsilon}) \geq 1 - \varepsilon$. On the other hand, since $\OOpt(Q)$ is closed and $(V_\varepsilon)_{\varepsilon \in (0, 1)}$ is a nondecreasing family of sets with $\bigcap_{\varepsilon \in (0, 1)} V_{\varepsilon} = \OOpt(Q)$, we conclude that $\widetilde Q(\OOpt(Q)) = \lim_{\varepsilon \to 0} \widetilde{Q}(V_{\varepsilon}) = 1$. Hence $\widetilde{Q} \in F(Q)$, concluding the proof that the graph of $F$ is closed.
\end{proof}

Using Lemma~\ref{lemm-F}, we are finally in position to state and prove our main result on the existence of equilibria for $\MFG(X, \Gamma, K, g, m_0)$. Recall that, by Corollary~\ref{coro-strong-iff-weak}, weak and strong equilibria of $\MFG(X, \Gamma, K, g, m_0)$ coincide under our assumptions.

\begin{theorem}
\label{thm-exist-equilibrium}
Consider the mean field game $\MFG(X, \Gamma, K, g, m_0)$ and assume that \ref{Hypo-X-SigmaCompact}--\ref{Hypo-X-dist} and \ref{HypoMFG-K-Bound}--\ref{HypoMFG-g-compatible} are satisfied. Then there exists an equilibrium $Q \in \mathcal P(\mathcal C_X)$ of $\MFG(X, \Gamma, K, g, m_0)$.
\end{theorem}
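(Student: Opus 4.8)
The plan is to recast the existence of an equilibrium as a fixed-point problem for the set-valued map $F\colon \mathfrak Q \rightrightarrows \mathfrak Q$ introduced in \eqref{eq-defi-F} and then invoke Kakutani fixed-point theorem. Since $Q$ is an equilibrium of $\MFG(X, \Gamma, K, g, m_0)$ if and only if $Q \in F(Q)$, it suffices to exhibit a fixed point of $F$.

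First I would check that the ambient structure is the one required by a Kakutani-type theorem. The set $\mathcal P(\mathcal C_X)$, endowed with the topology of weak convergence of measures, is naturally a convex subset of the space of finite signed Borel measures on the Polish space $\mathcal C_X$, which is a Hausdorff locally convex topological vector space for the weak topology. By Proposition~\ref{PropQNonemptyConvexCompact}, $\mathfrak Q$ is a nonempty, convex, and compact subset of this space, so it provides a suitable domain. Next I would collect the properties of $F$ that the theorem needs: by Lemma~\ref{lemm-F}, $F$ is upper semicontinuous on $\mathfrak Q$ and, for every $Q \in \mathfrak Q$, the value $F(Q)$ is a nonempty, convex, and compact subset of $\mathfrak Q$. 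With these facts in hand, Kakutani fixed-point theorem (see, e.g., \cite[\S~7, Theorem~8.6]{Granas2003Fixed}) yields a point $Q \in \mathfrak Q$ with $Q \in F(Q)$. By the definition \eqref{eq-defi-F} of $F$, this means ${e_0}_{\#} Q = m_0$ and $Q(\OOpt(Q)) = 1$, i.e., $Q$ is a weak equilibrium of $\MFG(X, \Gamma, K, g, m_0)$; by Corollary~\ref{coro-strong-iff-weak}, $Q$ is then also a strong equilibrium, which proves the theorem.

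Since all the substantial work has been carried out in the preliminary results, the proof at this stage is essentially a one-line application of a fixed-point theorem, and the only point requiring care is the verification of its hypotheses in the present infinite-dimensional setting — most notably that $F$ has compact values and is upper semicontinuous. The former reduces to the compactness of $\mathfrak Q$ (Proposition~\ref{PropQNonemptyConvexCompact}) together with the closedness of $\OOpt(Q)$, while the latter was established in Lemma~\ref{lemm-F} by showing that $F$ has closed graph, which in turn rests on the closed-graph property of $\OOpt$ from Proposition~\ref{prop-OOpt-closed-graph} and hence on the joint continuity of the value function $\varphi_Q$ in $(t, x, Q)$ from Proposition~\ref{PropValueFunctionContinuous}. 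I expect this chain of dependencies — rather than the fixed-point argument itself — to be the real obstacle behind the existence result.
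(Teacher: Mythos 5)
Your proposal is correct and follows essentially the same route as the paper: recast equilibria as fixed points of the set-valued map $F$ on $\mathfrak Q$, then combine Proposition~\ref{PropQNonemptyConvexCompact} and Lemma~\ref{lemm-F} with the Kakutani fixed-point theorem. Your additional remarks on the locally convex ambient space and the chain of dependencies behind the upper semicontinuity of $F$ are accurate but not different in substance from the paper's argument.
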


\begin{proof}
Let $F\colon \mathfrak Q \rightrightarrows \mathfrak Q$ be the set-valued map defined by \eqref{eq-defi-F}. By Proposition~\ref{PropQNonemptyConvexCompact}, Lem\-ma~\ref{lemm-F}, and the Kakutani fixed-point theorem (see, e.g., \cite[\S~7, Theorem~8.6]{Granas2003Fixed}), $F$ admits a fixed point, which, as discussed before, is an equilibrium of $\MFG(X, \Gamma, K, g, m_0)$.
\end{proof}

\begin{remark}
\label{RemkEquilibriumNonUnique}
Given $m_0 \in \mathcal P(X)$, one may have several equilibria of $\MFG(X, \Gamma, K, g, m_0)$, as one may see from the following example taken from \cite[Remark~7.1]{Mazanti2019Minimal}. Let $X = [0, 1]$, $\Gamma = \{0, 1\}$, and $K$ and $g$ be constant functions equal, respectively, to $1$ and $0$. Assume that $m_0$ is the Dirac delta measure on the point $\frac{1}{2}$. Let $\gamma_\ell, \gamma_r \in \mathcal C_X$ be given for $t \in \mathbb R_+$ by $\gamma_\ell(t) = \max\left(\frac{1}{2} - t, 0\right)$ and $\gamma_r(t) = \min\left(\frac{1}{2} + t, 1\right)$. Then any $Q \in \mathcal P(\mathcal C_X)$ concentrated on $\gamma_\ell, \gamma_r$ (i.e., satisfying $Q(\{\gamma_\ell, \gamma_r\}) = 1$) is an equilibrium for $\MFG(X, \Gamma, K, g, m_0)$. This example can be generalized for $X = \overline\Omega$ and $\Gamma = \partial\Omega$, where $\Omega \subset \mathbb R^d$ is a nonempty bounded open set, by taking $K$ and $g$ as before and considering initial distributions $m_0 \in \mathcal P(\overline\Omega)$ concentrated on the set where the distance function $\dist(\cdot, \partial\Omega)$ is not differentiable.

For other models of mean field games, uniqueness of equilibria is typically obtained under some monotonicity assumptions on functions appearing in the cost of each player (see, e.g., \cite[Proposition~2.9 and Theorem~3.6]{CardaliaguetNotes}, \cite[Theorem~4.1]{Lasry2006JeuxI}, and \cite[Theorem~3.1]{Lasry2006JeuxII}). Typically, these monotonicity assumptions mean that players tend to avoid congested regions, and they are important for uniqueness since games in which players tend to aggregate may present several equilibria (see, e.g., \cite{Cirant2019Time}). In our setting, it is not clear whether suitable congestion-avoidance assumptions should be sufficient for obtaining uniqueness of equilibria.
\end{remark}

\subsection{Asymptotic behavior}
\label{sec:asymptotic}

Given an equilibrium $Q \in \mathcal P(\mathcal C_X)$ of a mean field game $\MFG(X, \Gamma, K, g, m_0)$, we now wish to understand the asymptotic behavior of the distribution of agents at time $t$, $m_t = {e_t}_{\#} Q$, as $t \to +\infty$. It is natural to expect that $m_t$ converges, as $t \to +\infty$, to a measure $m_\infty$ concentrated in $\Gamma$ and, in addition to prove this, the goal of this section is to provide also the convergence rate of $m_t$ to $m_\infty$ in the Wasserstein distance. This question was addressed in \cite[Section~5.2]{Sadeghi2022Multi} for the mean field game $\MFG(\mathbb R^d, \Gamma, K, 0, m_0)$ (in a multipopulation setting), and it turns out that it is not difficult to generalize the arguments from \cite{Sadeghi2022Multi} to a set $X$ satisfying \ref{Hypo-X-SigmaCompact} and to the presence of an exit cost $g$ satisfying \ref{Hypo-g}. For sake of completeness, we provide below the details of this generalization.

In order to characterize the limit of $m_t$ as $t \to +\infty$, we will need some additional notation, which we now state. Let $\mathcal C_{\lim}(X) = \{\gamma \in \mathcal C_X \suchthat \lim_{t \to +\infty} \gamma(t) \text{ exists}\}$, which is a Borel subset of $\mathcal C_X$, and define $e_{\infty}\colon \mathcal C_{\lim}(X) \to X$ by $e_\infty(\gamma) = \lim_{t \to +\infty} \gamma(t)$, which is a Borel-measurable function. Note that $\mathcal C_{\lim}(X)$ is nonempty, since it contains, for instance, all constant trajectories (and also all continuous trajectories that become constant after some time). In addition, by definition of optimal trajectories, we have $\OOpt(Q) \subset \mathcal C_{\lim}(X)$ for every $Q \in \mathcal P(\mathcal C_X)$, and thus ${e_\infty}_{\#} Q \in \mathcal P(X)$ is well-defined for every equilibrium $Q$ of a mean field game $\MFG(X, \Gamma, K, g, m_0)$.

Our main result on the asymptotic behavior of equilibria is the following.

\begin{theorem}
\label{ThmAsymp}
Consider the mean field game $\MFG(X, \Gamma, K, g, m_0)$ and assume that \ref{Hypo-X-SigmaCompact}--\ref{Hypo-X-dist} and \ref{HypoMFG-K-Bound}--\ref{HypoMFG-g-compatible} are satisfied. Let $Q \in \mathcal P(\mathcal C_X)$ be an equilibrium of $\MFG(X,\allowbreak \Gamma,\allowbreak K,\allowbreak g,\allowbreak m_0)$ and, for $t \in [0, +\infty]$, define $m_t = {e_t}_{\#} Q$. Let $\psi$ be the function obtained by applying Proposition~\ref{prop-T-psi} to $\OCP(X, \Gamma, k, g)$, where $k(t, x) = K(m_t, x)$ for $(t, x) \in \mathbb R_+ \times X$.

\begin{enumerate}
\item\label{ThmAsymp-Converg} We have $m_t \to m_\infty$ as $t \to +\infty$.

\item\label{ThmAsymp-Wp} Let $p \in [1, +\infty)$ and assume that $m_0 \in \mathcal P_p(X)$. Then, for every $t \in [0, +\infty]$, we have $m_t \in \mathcal P_p(X)$. Moreover, there exist constants $\alpha > 0$ and $t_0 \geq 0$ only depending on $\mathbf 0$, $\Gamma$, $g$, $D$, $K_{\min}$, and $K_{\max}$ such that, for all $t \geq t_0$, we have
\begin{equation}
\label{eq-cv-rate-Wp}
\mathbf W_p(m_t, m_\infty)^p \leq 2^p \int_{X \setminus \overline B_X(\mathbf 0, \alpha(t - t_0))} \psi(\dist(\mathbf 0, x))^p \diff m_0(x).
\end{equation}

\item\label{ThmAsymp-FiniteTime} Assume that $m_0$ is compactly supported. Then, for every $t \in [0, +\infty]$, $m_t$ is compactly supported and there exists $t_\ast \geq 0$ such that, for every $t \geq t_\ast$, we have
\[
m_t = m_\infty.
\]
\end{enumerate}
\end{theorem}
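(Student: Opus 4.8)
The plan is to extract everything from Proposition~\ref{prop-T-psi}, which simultaneously controls the exit time and the spatial extent of optimal trajectories in terms of their starting point. Throughout, abbreviate $r(\gamma) = \dist(\mathbf 0, \gamma(0))$ and let $T, \psi\colon \mathbb R_+ \to \mathbb R_+$ be the nondecreasing maps with linear growth from Proposition~\ref{prop-T-psi}. Since $Q$ is an equilibrium, $Q$-almost every $\gamma$ lies in $\OOpt(Q) \subset \mathcal C_{\lim}(X)$; for such a $\gamma$ one has $\tau(0,\gamma) < +\infty$, $\gamma$ is constant on $[\tau(0,\gamma), +\infty)$ with value $e_\infty(\gamma) \in \Gamma$, and moreover $\tau(0,\gamma) \leq \tau(0,\gamma) + G(0,\gamma) = \varphi(0,\gamma(0)) \leq T(r(\gamma))$ together with $\gamma(t) \in \overline B_X(\mathbf 0, \psi(r(\gamma)))$ for every $t \geq 0$. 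These three facts will drive all three parts.

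For part~\ref{ThmAsymp-Converg}, I would simply note that $\gamma(t) = e_\infty(\gamma)$ for $t \geq \tau(0,\gamma)$, hence $e_t(\gamma) \to e_\infty(\gamma)$ as $t \to +\infty$ for $Q$-a.e.\ $\gamma$; then, for every bounded continuous $f\colon X \to \mathbb R$, the dominated convergence theorem yields $\int_X f \diff m_t = \int_{\mathcal C_X} f(\gamma(t)) \diff Q(\gamma) \to \int_{\mathcal C_X} f(e_\infty(\gamma)) \diff Q(\gamma) = \int_X f \diff m_\infty$, i.e.\ $m_t \to m_\infty$.

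For part~\ref{ThmAsymp-Wp}, I would use the explicit linear bound from the proof of Proposition~\ref{prop-T-psi}, namely $T(R) \leq aR + b$ with $a = D/K_{\min}$, and set $t_0 = b$ and $\alpha = 1/a = K_{\min}/D$ (both depending only on the allowed data). If $r(\gamma) \leq \alpha(t - t_0)$ then $\tau(0,\gamma) \leq T(r(\gamma)) \leq a r(\gamma) + b \leq t$, so $\gamma(t) = e_\infty(\gamma)$. Taking the coupling $\lambda = (e_t, e_\infty)_\# Q \in \Pi(m_t, m_\infty)$ and bounding $\dist(\gamma(t), e_\infty(\gamma)) \leq \dist(\gamma(t), \mathbf 0) + \dist(\mathbf 0, e_\infty(\gamma)) \leq 2\psi(r(\gamma))$ on the remaining set, one gets
\begin{align*}
\mathbf W_p(m_t, m_\infty)^p
&\leq \int_{\mathcal C_X} \dist(\gamma(t), e_\infty(\gamma))^p \diff Q(\gamma)
= \int_{\{\gamma : r(\gamma) > \alpha(t - t_0)\}} \dist(\gamma(t), e_\infty(\gamma))^p \diff Q(\gamma) \\
&\leq 2^p \int_{\{\gamma : r(\gamma) > \alpha(t - t_0)\}} \psi(r(\gamma))^p \diff Q(\gamma)
= 2^p \int_{X \setminus \overline B_X(\mathbf 0, \alpha(t - t_0))} \psi(\dist(\mathbf 0, x))^p \diff m_0(x),
\end{align*}
using ${e_0}_\# Q = m_0$ in the last step, which is \eqref{eq-cv-rate-Wp}. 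The membership $m_t \in \mathcal P_p(X)$ for every $t \in [0,+\infty]$ follows from $\dist(\mathbf 0, \gamma(t))^p \leq \psi(r(\gamma))^p \leq C(1 + r(\gamma)^p)$ (linear growth of $\psi$), integrated against $Q$ and using $m_0 \in \mathcal P_p(X)$; the same inequality with $e_\infty$ in place of $e_t$ covers $m_\infty$.

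For part~\ref{ThmAsymp-FiniteTime}, if $\supp(m_0) \subset \overline B_X(\mathbf 0, R_0)$ then $Q$-a.e.\ $\gamma$ has $r(\gamma) \leq R_0$, hence stays in the compact set $\overline B_X(\mathbf 0, \psi(R_0))$ — giving $\supp(m_t) \subset \overline B_X(\mathbf 0, \psi(R_0))$ for all $t$ — and satisfies $\tau(0,\gamma) \leq T(R_0)$. Taking $t_\ast = T(R_0)$, for $t \geq t_\ast$ one has $e_t = e_\infty$ $Q$-a.e., whence $m_t = {e_t}_\# Q = {e_\infty}_\# Q = m_\infty$; alternatively this drops out of \eqref{eq-cv-rate-Wp} once $\alpha(t - t_0) > R_0$. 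I do not expect a genuine obstacle here: the whole proof is bookkeeping on top of Proposition~\ref{prop-T-psi}, the only subtle points being the measurability of $\mathcal C_{\lim}(X)$ and $e_\infty$ (already recorded before the statement, so that $m_\infty = {e_\infty}_\# Q$ and the coupling $(e_t,e_\infty)_\# Q$ are legitimate) and checking that $\alpha, t_0, t_\ast$ depend only on the listed quantities, which is immediate from the explicit forms of $T$ and $\psi$.
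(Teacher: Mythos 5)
Your proposal is correct and follows essentially the same route as the paper's proof: dominated convergence for part (a), the coupling $(e_t,e_\infty)_{\#}Q$ together with the observation that trajectories starting in $\overline B_X(\mathbf 0,\alpha(t-t_0))$ have already stopped by time $t$ for part (b), and $t_\ast = T(R_0)$ for part (c). The only cosmetic difference is that you instantiate $\alpha$ and $t_0$ explicitly from the linear bound on $T$, whereas the paper simply fixes any $\alpha>0$, $t_0\geq 0$ with $T(R)\leq R/\alpha + t_0$.
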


\begin{proof}
Let $T$ be the function obtained by applying Proposition~\ref{prop-T-psi} to $\OCP(X, \Gamma, k, g)$, and let $\alpha > 0$ and $t_0 \geq 0$ be such that $T(R) \leq \frac{R}{\alpha} + t_0$ for every $R > 0$. Note that, thanks to Corollary~\ref{coro-k-from-K}, $\alpha$ and $t_0$ can be chosen to depend only on $\mathbf 0$, $\Gamma$, $g$, $D$, $K_{\min}$, and $K_{\max}$.

To show \ref{ThmAsymp-Converg}, let $f\colon X \to \mathbb R$ be continuous and bounded. We then have, using the continuity and boundedness of $f$ and Lebesgue's dominated convergence theorem, that
\begin{multline*}
\int_{X} f(x) \diff m_t(x) = \int_{\mathcal{C}_{\lim}(X)} f(\gamma(t)) \diff Q(\gamma) \\ \xrightarrow[t \to +\infty]{} \int_{\mathcal C_{\lim}(X)} f\Bigl(\lim_{t \to +\infty} \gamma(t)\Bigr) \diff Q(\gamma) = \int_{X} f(x) \diff m_\infty(x),
\end{multline*}
yielding the required convergence.

Let us now prove \ref{ThmAsymp-Wp}. By Proposition~\ref{prop-T-psi}, for every $\gamma \in \OOpt(Q)$, we have that $\gamma(t) \in \overline B_X \Bigl(\mathbf 0, \psi\bigl(\dist(\mathbf 0, \gamma(0))\bigr)\Bigr)$ for every $t \in \mathbb R_+$, and the same is still true for $t = +\infty$ by taking the limit $t \to +\infty$. Hence, for $t \in [0, +\infty]$, we have
\begin{multline*}
\int_{X} \dist(\mathbf 0, x)^p \diff m_t(x) = \int_{\OOpt(Q)} \dist(\mathbf 0, \gamma(t))^p \diff Q(\gamma) \\ \leq \int_{\OOpt(Q)} \psi(\dist(\mathbf 0, \gamma(0)))^p \diff Q(\gamma) = \int_{X} \psi(\dist(\mathbf 0, x))^p \diff m_0(x),
\end{multline*}
where $\gamma(\infty)$ is defined for $\gamma \in \OOpt(Q)$ as $\lim_{t \to +\infty} \gamma(t)$. Since $\psi$ has linear growth, it follows that $m_t \in \mathcal P_p(X)$ for every $t \in [0, +\infty]$.

Let $t \in [t_0, +\infty)$. Note that, using the notation introduced in Section~\ref{sec:notation}, we have $(e_t, e_\infty)_{\#} Q \in \Pi(m_t, m_\infty)$ and thus, by \eqref{eq-defi-Wasserstein}, we have
\[
\mathbf{W}_p(m_t, m_\infty)^p \leq \int_{X \times X} \dist(x, y)^p \diff (e_t, e_\infty)_{\#} Q(x, y) = \int_{\OOpt(Q)} \dist(e_t(\gamma), e_{\infty}(\gamma))^p \diff Q(\gamma).
\]
If $\gamma \in \OOpt(Q)$ is such that $\dist(\mathbf 0, \gamma(0)) \leq \alpha(t - t_0)$, then, since $T(\dist(\mathbf 0, \gamma(0))) \leq t$, we have, as a consequence of Proposition~\ref{prop-T-psi}, that $\gamma(t) \in \Gamma$ and $\gamma$ is constant on $[t, +\infty)$, yielding that $e_t(\gamma) = e_\infty(\gamma)$. Thus
\begin{equation*}
\mathbf{W}_p(m_t, m_\infty)^p \leq \int_{\OOpt(Q) \cap \left\{\gamma \in \mathcal C_X \suchthat \dist(\mathbf 0, \gamma(0)) > \alpha(t - t_0)\right\}} \dist(e_t(\gamma), e_{\infty}(\gamma))^p \diff Q(\gamma).
\end{equation*}
Recalling that $e_t(\gamma) = \gamma(t) \in \overline B_X \Bigl(\mathbf 0, \psi\bigl(\dist(\mathbf 0, \gamma(0))\bigr)\Bigr)$ for every $t \in [0, +\infty]$ and $\gamma \in \OOpt(Q)$, one has $\dist(\mathbf 0, e_t(\gamma)) \leq \psi\bigl(\dist(\mathbf 0, \gamma(0))\bigr)$ and thus
\begin{align*}
\mathbf{W}_p(m_t, m_\infty)^p & \leq \text{$\displaystyle \int_{\OOpt(Q) \cap \left\{\gamma \in \mathcal C_X \suchthat \dist(\mathbf 0, \gamma(0)) > \alpha(t - t_0)\right\}}$} 2^p \psi\bigl(\text{$\dist(\mathbf 0, \gamma(0))$}\bigr)^p \diff Q(\gamma) \displaybreak[0]\\
& = 2^p \int_{X \setminus \overline B_X(\mathbf 0, \alpha(t - t_0))} \psi(\dist(\mathbf 0, x))^p \diff m_0(x),
\end{align*}
as required.

Finally, to prove \ref{ThmAsymp-FiniteTime}, let $R_0 > 0$ be such that the support of $m_0$ is included in $\overline B_X(\mathbf 0, R_0)$ and notice that, as a consequence of Proposition~\ref{prop-T-psi}, the support of $m_t$ is included in the compact set $\overline B_X(\mathbf 0, \psi(R_0))$ for every $t \in [0, +\infty]$. Letting $t_\ast = T(R_0)$, we deduce that, for every $t \geq t_\ast$ and $\gamma \in \OOpt(Q)$ with $\gamma(0) \in \overline B_X(\mathbf 0, R_0)$, we have $e_t(\gamma) = e_\infty(\gamma)$, which concludes the proof since $Q$ is supported in $\OOpt(Q) \cap \{\gamma \in \mathcal C_X \suchthat \gamma(0) \in \overline B_X(\mathbf 0, R_0)\}$.
\end{proof}

\begin{remark}
The limit $m_\infty \in \mathcal P(X)$ from Theorem~\ref{ThmAsymp}\ref{ThmAsymp-Converg} is characterized as $m_\infty = {e_\infty}_\# Q$, and hence it is uniquely determined by $Q$. However, as equilibria $Q$ of a given mean field game $\MFG(X, \Gamma, K, g, m_0)$ are not necessarily unique, $m_\infty$ is also not necessarily uniquely determined by $m_0$. Indeed, for the example of mean field game considered in Remark~\ref{RemkEquilibriumNonUnique}, $m_\infty$ can be any measure of the form $\alpha \delta_0 + (1 - \alpha) \delta_1$ for $\alpha \in [0, 1]$, where $\delta_a$ denotes the Dirac delta measure at $a$.
\end{remark}

We conclude this section by illustrating how \eqref{eq-cv-rate-Wp} can be used to obtain explicit convergence rates of $m_t$ to $m_\infty$ in the Wasserstein distance $\mathbf W_p$ in the case $X = \mathbb R^d$.

\begin{corollary}
\label{coro:asymp}
Let $d \in \mathbb N^\ast$ and assume that $\mathbb R^d$ is endowed with the Euclidean distance. Consider the mean field game $\MFG(\mathbb R^d, \Gamma, K, g, m_0)$ and assume that \ref{Hypo-Gamma}, \ref{Hypo-g}, and \ref{HypoMFG-K-Bound}--\ref{HypoMFG-g-compatible} are satisfied. Let $Q \in \mathcal P(\mathcal C_X)$ be an equilibrium of $\MFG(X,\allowbreak \Gamma,\allowbreak K,\allowbreak g,\allowbreak m_0)$ and, for $t \in [0, +\infty]$, define $m_t = {e_t}_{\#} Q$. Fix $p \in [1, +\infty)$.
\begin{enumerate}
\item\label{item:poly-decay} Assume that there exist $R_0 > 0$, $C_0 > 0$, and $\beta \in (p + d, +\infty)$ such that the restriction of $m_0$ to $\mathbb R^d \setminus \overline B_{\mathbb R^d}(0, R_0)$ is absolutely continuous with respect to the Lebesgue measure and its density, still denoted by $m_0$, satisfies $m_0(x) \leq \frac{C_0}{\abs{x}^\beta}$ for almost every $x \in \mathbb R^d \setminus \overline B_{\mathbb R^d}(0, R_0)$. Then there exist $T_\ast > 0$ (only depending on $\Gamma$, $g$, $K_{\min}$, $K_{\max}$, and $R_0$) and $C_\ast > 0$ (only depending on $\Gamma$, $g$, $K_{\min}$, $K_{\max}$, $C_0$, $d$, $p$, and $\beta$) such that, for every $t > T_\ast$, we have
\[
\mathbf W_p(m_t, m_\infty)^p \leq \frac{C_\ast}{t^{\beta - p - d}}.
\]

\item\label{item:exp-decay} Assume that there exist $R_0 > 0$, $C_0 > 0$, and $\gamma_0 > 0$ such that the restriction of $m_0$ to $\mathbb R^d \setminus \overline B_{\mathbb R^d}(0, R_0)$ is absolutely continuous with respect to the Lebesgue measure and its density, still denoted by $m_0$, satisfies $m_0(x) \leq C_0 e^{-\gamma_0 \abs{x}}$ for almost every $x \in \mathbb R^d \setminus \overline B_{\mathbb R^d}(0, R_0)$. Then there exist $T_\ast > 0$ (only depending on $\Gamma$, $g$, $K_{\min}$, $K_{\max}$, and $R_0$), $C_\ast > 0$ (only depending on $\Gamma$, $g$, $K_{\min}$, $K_{\max}$, $C_0$, $\gamma_0$, $d$, and $p$), and $\gamma_\ast > 0$ (only depending on $\Gamma$, $g$, $K_{\min}$, $K_{\max}$, and $\gamma_0$) such that, for every $t > T_\ast$, we have
\[
\mathbf W_p(m_t, m_\infty)^p \leq C_\ast t^{p + d - 1} e^{-\gamma_\ast t}.
\]
\end{enumerate}
\end{corollary}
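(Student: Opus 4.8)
The plan is to deduce both decay rates directly from estimate \eqref{eq-cv-rate-Wp} of Theorem~\ref{ThmAsymp}\ref{ThmAsymp-Wp} by plugging in the prescribed tail behavior of $m_0$ and evaluating an elementary integral. First I would observe that $X = \mathbb R^d$ with the Euclidean distance automatically satisfies the two hypotheses not explicitly assumed in the corollary: \ref{Hypo-X-SigmaCompact} holds with $\mathbf 0 = 0$ since closed Euclidean balls are compact, and \ref{Hypo-X-dist} holds with $D = 1$ since, given $x \neq y$, the segment $t \mapsto x + t\frac{y-x}{\abs{y-x}}$, $t \in [0, \abs{x-y}]$, joins $x$ to $y$ with unit metric derivative. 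Hence Theorem~\ref{ThmAsymp} applies, and the constants $\alpha > 0$, $t_0 \geq 0$ and the function $\psi$ appearing there (all coming ultimately from Proposition~\ref{prop-T-psi}) depend only on $\Gamma$, $g$, $K_{\min}$, $K_{\max}$ once $\mathbf 0$ and $D$ are fixed as above. Linear growth of $\psi$ provides $a, b \geq 0$ with $\psi(r) \leq ar + b$ for all $r \geq 0$, whence $\psi(r)^p \leq (a+b)^p r^p =: C_1 r^p$ whenever $r \geq 1$, with $C_1$ depending only on $\Gamma$, $g$, $K_{\min}$, $K_{\max}$, $p$.

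Next I would check that each tail hypothesis forces $m_0 \in \mathcal P_p(\mathbb R^d)$ --- in case \ref{item:poly-decay} because $\int_{\abs{x} > R_0} \abs{x}^{p-\beta}\diff x < +\infty$ as $\beta > p + d$, the mass inside $\overline B_{\mathbb R^d}(0, R_0)$ being trivially $p$-integrable, and in case \ref{item:exp-decay} trivially --- so that \eqref{eq-cv-rate-Wp} is indeed available. I would then fix $T_\ast$, depending only on $\Gamma$, $g$, $K_{\min}$, $K_{\max}$, $R_0$, large enough that for every $t > T_\ast$ one has both $t \geq 2t_0$ (hence $t - t_0 \geq t/2$) and $\alpha(t - t_0) \geq \max(R_0, 1)$. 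On the set $\{\abs{x} > \alpha(t-t_0)\}$ the measure $m_0$ is then absolutely continuous with the stated density bound and $\psi(\abs{x})^p \leq C_1\abs{x}^p$, so passing to polar coordinates turns \eqref{eq-cv-rate-Wp} into, in case \ref{item:poly-decay},
\[
\mathbf W_p(m_t, m_\infty)^p \leq 2^p C_1 C_0 \sigma_{d-1} \int_{\alpha(t-t_0)}^{+\infty} r^{p+d-1-\beta}\diff r = \frac{2^p C_1 C_0 \sigma_{d-1}}{\beta - p - d}\,\bigl(\alpha(t-t_0)\bigr)^{p+d-\beta},
\]
where $\sigma_{d-1}$ is the surface measure of the unit sphere of $\mathbb R^d$; since $p + d - \beta < 0$ and $\alpha(t-t_0) \geq \alpha t/2$, this is at most $C_\ast/t^{\beta - p - d}$ with $C_\ast$ depending only on $\Gamma$, $g$, $K_{\min}$, $K_{\max}$, $C_0$, $d$, $p$, $\beta$.

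For case \ref{item:exp-decay} the same reduction gives $\mathbf W_p(m_t, m_\infty)^p \leq 2^p C_1 C_0 \sigma_{d-1}\int_{\alpha(t-t_0)}^{+\infty} r^{p+d-1} e^{-\gamma_0 r}\diff r$, and the one step that is not pure bookkeeping is estimating this incomplete-Gamma-type tail. I would substitute $r = R + s$ with $R = \alpha(t-t_0)$, bound $(R+s)^{p+d-1} \leq 2^{p+d-1}(R^{p+d-1} + s^{p+d-1})$ (valid since $p+d-1 \geq 1$), and integrate the two pieces against $e^{-\gamma_0 s}$, obtaining $\tfrac{1}{\gamma_0} R^{p+d-1}$ and the constant $\int_0^{+\infty} s^{p+d-1} e^{-\gamma_0 s}\diff s$; for $R \geq 1$ the latter is dominated by the former, so the integral is at most a constant (depending on $p$, $d$, $\gamma_0$) times $R^{p+d-1} e^{-\gamma_0 R}$. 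Writing $R = \alpha(t - t_0) \leq \alpha t$ in the polynomial factor and $e^{-\gamma_0 \alpha(t-t_0)} = e^{\gamma_0\alpha t_0} e^{-\gamma_0\alpha t}$ in the exponential factor, and absorbing constants, yields $\mathbf W_p(m_t, m_\infty)^p \leq C_\ast t^{p+d-1} e^{-\gamma_\ast t}$ with $\gamma_\ast = \gamma_0\alpha$ (depending only on $\Gamma$, $g$, $K_{\min}$, $K_{\max}$, $\gamma_0$) and $C_\ast$ depending only on $\Gamma$, $g$, $K_{\min}$, $K_{\max}$, $C_0$, $\gamma_0$, $d$, $p$. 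There is essentially no genuine obstacle here; the only mild subtlety is keeping careful track of which quantities the successive constants are allowed to depend on, which is guaranteed by the fact that $\alpha$, $t_0$, and $\psi$ are supplied by Proposition~\ref{prop-T-psi} with a known, restricted dependence, together with $\mathbf 0$ and $D$ being pinned down by the choice $X = \mathbb R^d$.
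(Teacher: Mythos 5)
Your proposal is correct and follows essentially the same route as the paper: verify \ref{Hypo-X-SigmaCompact} and \ref{Hypo-X-dist} for Euclidean $\mathbb R^d$, reduce to \eqref{eq-cv-rate-Wp} via the linear growth of $\psi$ and a suitable $T_\ast$, and evaluate the resulting radial integrals. The only (immaterial) difference is in case \ref{item:exp-decay}, where you bound the shifted integrand by $2^{p+d-1}(R^{p+d-1}+s^{p+d-1})$ while the paper factors out $R^{p+d-1}e^{-\gamma_0 R}$ and bounds $(1+r/R)^{p+d-1}\leq(1+r)^{p+d-1}$; both yield the same conclusion with the same dependence of the constants.
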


\begin{proof}
We first note that, for $X = \mathbb R^d$ endowed with the Euclidean distance, \ref{Hypo-X-SigmaCompact} is satisfied (and we take $\mathbf 0$ as the origin $0$ of $\mathbb R^d$ in the sequel) and \ref{Hypo-X-dist} is satisfied with $D = 1$ (by taking $\gamma$ to be the straight line connecting $x$ to $y$ with unit speed). Moreover, in both \ref{item:poly-decay} and \ref{item:exp-decay}, we have $m_0 \in \mathcal P_p(\mathbb R^d)$. Indeed, this is immediate in \ref{item:exp-decay}, while, in \ref{item:poly-decay}, it is a consequence of the assumption that $\beta \in (p + d, +\infty)$.

Take $R_0 > 0$ and $C_0 > 0$ as in \ref{item:poly-decay} or \ref{item:exp-decay}. Let $\psi$ be as in Theorem~\ref{ThmAsymp} and note that, by Proposition~\ref{prop-T-psi} and Corollary~\ref{coro-k-from-K}, $\psi$ satisfies $\psi(R) \leq A R + B$ for every $R > 0$, for some nonnegative constants $A$ and $B$ only depending on $\Gamma$, $g$, $K_{\min}$, and $K_{\max}$. In particular, $\psi(R) \leq A_\ast R$ for every $R \geq 1$, where $A_\ast = A + B$. Let $\alpha > 0$ and $t_0 \geq 0$ be as in Theorem~\ref{ThmAsymp}\ref{ThmAsymp-Wp} and set $T_0 = t_0 + \frac{1}{\alpha} \max(R_0, 1)$. Hence, for every $t > T_0$, we have $\alpha(t - t_0) > \max(R_0, 1)$, and thus, by Theorem~\ref{ThmAsymp}\ref{ThmAsymp-Wp}, we have
\begin{equation}
\label{eq:poly-exp-common-estim}
\mathbf W_p(m_t, m_\infty)^p \leq (2 A_\ast)^p \int_{\mathbb R^d \setminus \overline B_{\mathbb R^d}(0, \alpha(t - t_0))} \abs{x}^p \diff m_0(x).
\end{equation}
In the sequel, we denote by $\omega_d$ the surface of the unit sphere in $\mathbb R^d$.

Let us prove \ref{item:poly-decay}. Let $T_\ast = \max(T_0, 2 t_0)$. For every $t > T_\ast$, we have from \eqref{eq:poly-exp-common-estim} that
\begin{align*}
\mathbf W_p(m_t, m_\infty)^p & \leq (2 A_\ast)^p C_0 \int_{\mathbb R^d \setminus \overline B_{\mathbb R^d}(0, \alpha(t - t_0))} \frac{1}{\abs{x}^{\beta - p}}\diff x \\
& = (2 A_\ast)^p C_0 \omega_d \int_{\alpha(t - t_0)}^{+\infty} \frac{1}{r^{\beta - p - d + 1}}\diff r \\
& = \frac{(2 A_\ast)^p C_0 \omega_d}{(\beta - p - d) \alpha^{\beta - p - d} (t - t_0)^{\beta - p - d}} \leq \frac{2^{\beta - d} A_\ast^p C_0 \omega_d}{(\beta - p - d) \alpha^{\beta - p - d} t^{\beta - p - d}},
\end{align*}
where we use in the last inequality that $t - t_0 > \frac{t}{2}$ for $t > T_\ast$. We then get the desired conclusion with $C_\ast = \frac{2^{\beta - d} A_\ast^p C_0 \omega_d}{(\beta - p - d) \alpha^{\beta - p - d}}$.

Let us now prove \ref{item:exp-decay}. Let $T_\ast = T_0$. For every $t > T_\ast$, we have from \eqref{eq:poly-exp-common-estim} that
\begin{align*}
\mathbf W_p(m_t, m_\infty)^p & \leq (2 A_\ast)^p C_0 \int_{\mathbb R^d \setminus \overline B_{\mathbb R^d}(0, \alpha(t - t_0))} \abs{x}^p e^{-\gamma_0 \abs{x}} \diff x \\
& = (2 A_\ast)^p C_0 \omega_d \int_{\alpha(t - t_0)}^{+\infty} r^{p+d-1} e^{-\gamma_0 r} \diff r.
\end{align*}
Performing a change of variables in the integral, we can rewrite the last term of the above equation as
\[
(2 A_\ast)^p C_0 \omega_d (\alpha(t - t_0))^{p+d-1} e^{-\gamma_0 \alpha (t - t_0)} \int_{0}^{+\infty} \left(1 + \frac{r}{\alpha(t - t_0)}\right)^{p+d-1} e^{-\gamma_0 r} \diff r
\]
and, recalling that $1 < \alpha(t - t_0) \leq \alpha t$, we deduce that
\[
\mathbf W_p(m_t, m_\infty)^p \leq (2 A_\ast)^p C_0 \omega_d (\alpha t)^{p+d-1} e^{-\gamma_0 \alpha (t - t_0)} \int_{0}^{+\infty} \left(1 + r\right)^{p+d-1} e^{-\gamma_0 r} \diff r,
\]
which is the desired conclusion with $\gamma_\ast = \gamma_0 \alpha$ and 
\[C_\ast = (2 A_\ast)^p C_0 \omega_d \alpha^{p+d-1} e^{\gamma_0 \alpha t_0} \int_{0}^{+\infty} \left(1 + r\right)^{p+d-1} e^{-\gamma_0 r} \diff r. \qedhere\]
\end{proof}

\subsection{Dependence on the initial distribution of agents}
\label{sec:dependence-m0}

In Section~\ref{sec:existence}, we have established that, under our standing assumptions \ref{Hypo-X-SigmaCompact}--\ref{Hypo-X-dist} and \ref{HypoMFG-K-Bound}--\ref{HypoMFG-g-compatible}, for any $m_0 \in \mathcal P(X)$, the mean field game $\MFG(X, \Gamma, K, g, m_0)$ admits an equilibrium $Q \in \mathcal P(\mathcal C_X)$ and, in Section~\ref{sec:asymptotic}, we have considered the asymptotic behavior of such equilibria, i.e., the behavior of $m_t = {e_t}_{\#} Q$ as $t \to +\infty$, showing that these measures converge to the limit measure $m_\infty \in \mathcal P(X)$ characterized by $m_\infty = {e_\infty}_\# Q$. Our next goal is to understand how equilibria $Q$ and limit measures $m_\infty$ depend on $m_0$. Since both $Q$ and $m_\infty$ are not uniquely determined by $m_0$, we will make use a set-valued framework.

More precisely, given a metric space $(X, \dist)$, a nonempty closed subset $\Gamma$ of $X$, and functions $K\colon \mathcal P(X) \times X \to \mathbb R_+$ and $g\colon \Gamma \to \mathbb R_+$, we introduce the set-valued map $\Eq\colon \mathcal P(X) \rightrightarrows \mathcal P(\mathcal C_X)$ defined, for $m_0 \in \mathcal P(X)$, by
\begin{equation}
\label{eq:def-eq}
\Eq(m_0) = \{Q \in \mathcal P(\mathcal C_X) \suchthat Q \text{ is an equilibrium of } \MFG(X, \Gamma, K, g, m_0)\}.
\end{equation}
Note that $\Eq(m_0)$ can be equivalently rewritten as
\[
\Eq(m_0) = \{Q \in \mathcal P(\mathcal C_X) \suchthat {e_0}_\# Q = m_0 \text{ and } Q(\OOpt(Q)) = 1\}.
\]
We also introduce the set-valued map $\Lim\colon \mathcal P(X) \rightrightarrows \mathcal P(X)$ by setting, for $m_0 \in \mathcal P(X)$,
\begin{equation}
\label{eq:def-Lim}
\Lim(m_0) = \{{e_\infty}_\# Q \suchthat Q \in \Eq(m_0)\}.
\end{equation}
Our main result on the set-valued map $\Eq$ is the following one.

\begin{theorem}
\label{thm:eq-usc}
Consider the mean field game $\MFG(X, \Gamma, K, g, m_0)$, assume that \ref{Hypo-X-SigmaCompact}--\ref{Hypo-X-dist} and \ref{HypoMFG-K-Bound}--\ref{HypoMFG-g-compatible} are satisfied, and let $\Eq\colon \mathcal P(X) \rightrightarrows \mathcal P(\mathcal C_X)$ be the set-valued map defined in \eqref{eq:def-eq}. Then $\Eq(m_0)$ is nonempty and compact for every $m_0 \in \mathcal P(X)$ and $\Eq$ is upper semicontinuous. In particular, $\Eq$ has closed graph.
\end{theorem}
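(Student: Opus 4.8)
The plan is to combine the existence result of Theorem~\ref{thm-exist-equilibrium} with a reorganization of the compactness arguments already carried out in the proof of Lemma~\ref{lemm-F}, the only genuinely new ingredient being that all the a priori bounds involved are uniform with respect to the initial datum. Since $X$ is Polish by \ref{Hypo-X-SigmaCompact}, the spaces $\mathcal C_X$, $\mathcal P(X)$, and $\mathcal P(\mathcal C_X)$ are Polish, hence metrizable, which legitimizes the sequential arguments below. Nonemptiness of $\Eq(m_0)$ for every $m_0 \in \mathcal P(X)$ is precisely Theorem~\ref{thm-exist-equilibrium}. The key observation for everything else is that, by Proposition~\ref{prop-equilibrium-psi} applied to $\MFG(X, \Gamma, K, g, m_0)$, every $Q \in \Eq(m_0)$ satisfies $Q\bigl(\Lip_{K_{\max}}(\overline B_X(\mathbf 0, \psi(R)))\bigr) \geq m_0\bigl(\overline B_X(\mathbf 0, R)\bigr)$ for all $R > 0$, with one and the same function $\psi$ for every initial datum, since $\psi$ depends only on $\mathbf 0$, $\Gamma$, $g$, $D$, $K_{\min}$, and $K_{\max}$; in particular $\Eq(m_0) \subset \mathfrak Q$, where $\mathfrak Q$ is the set defined in \eqref{eq-defi-Q}, which is compact by Proposition~\ref{PropQNonemptyConvexCompact}.

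Next we would prove that $\Eq$ has closed graph. Let $(m_0^n, Q_n)_{n \in \mathbb N}$ be a sequence in the graph of $\Eq$ converging to some $(m_0, Q)$. Since ${e_0}_{\#} Q_n = m_0^n$ and $Q \mapsto {e_0}_{\#} Q$ is continuous by Lemma~\ref{lemm-etQ-continuous}, we get ${e_0}_{\#} Q = m_0$, so it remains to show $Q(\OOpt(Q)) = 1$. This is done exactly as in the closed-graph part of the proof of Lemma~\ref{lemm-F}, taking $\widetilde Q_n = Q_n$ there and replacing the fixed initial datum by the varying one $m_0^n$: given $\varepsilon \in (0, 1)$, one chooses $R_0 > 0$ with $m_0\bigl(B_X(\mathbf 0, R_0)\bigr) > 1 - \varepsilon$ (possible since $X$ is $\sigma$-compact), and the Portmanteau theorem applied to the open ball $B_X(\mathbf 0, R_0)$ yields $m_0^n\bigl(\overline B_X(\mathbf 0, R_0)\bigr) > 1 - \varepsilon$ for $n$ large; from here the argument of Lemma~\ref{lemm-F} — which relies only on the closed graph of $\OOpt$ (Proposition~\ref{prop-OOpt-closed-graph}), the compactness of $\Lip_{K_{\max}}(\overline B_X(\mathbf 0, \psi(R_0)))$ via Arzelà--Ascoli, and the Portmanteau theorem — goes through verbatim and gives $Q(\OOpt(Q)) = 1$, i.e.\ $Q \in \Eq(m_0)$. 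Taking $m_0^n \equiv m_0$ shows in particular that $\Eq(m_0)$ is closed; combined with $\Eq(m_0) \subset \mathfrak Q$, this yields compactness of $\Eq(m_0)$ and also proves the final ``in particular'' assertion.

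Finally, to obtain upper semicontinuity we would first record a tightness estimate: if $m_0^n \to m_0$ and $Q_n \in \Eq(m_0^n)$ for each $n$, then $(Q_n)_{n \in \mathbb N}$ is tight, hence relatively compact in $\mathcal P(\mathcal C_X)$ by Prokhorov's theorem. Indeed, given $\varepsilon > 0$, choosing $R_0$ as above and using Proposition~\ref{prop-equilibrium-psi}, the measures $Q_n$ charge the compact set $\Lip_{K_{\max}}(\overline B_X(\mathbf 0, \psi(R_0)))$ with mass $> 1 - \varepsilon$ for all large $n$, the finitely many remaining $Q_n$ being absorbed by enlarging the compact set. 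Now suppose $\Eq$ were not upper semicontinuous at some $m_0$: there would exist an open set $U \supseteq \Eq(m_0)$ in $\mathcal P(\mathcal C_X)$, a sequence $m_0^n \to m_0$, and $Q_n \in \Eq(m_0^n) \setminus U$; by the tightness estimate a subsequence $Q_{n_k}$ converges to some $Q \in \mathcal P(\mathcal C_X)$, which lies in $\Eq(m_0) \subseteq U$ by the closed-graph property, while $Q$ also lies in the closed set $\mathcal P(\mathcal C_X) \setminus U$ since the latter contains every $Q_{n_k}$ — a contradiction. The main obstacle is that the set $\mathfrak Q$ confining the equilibria depends on the initial datum, so uniformity cannot be taken for granted; it is supplied precisely by the $m_0$-independence of the function $\psi$ in Propositions~\ref{prop-T-psi} and~\ref{prop-equilibrium-psi}, and the remaining work is the routine transfer of mass estimates from $m_0$ to nearby measures $m_0^n$ via the Portmanteau theorem. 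Everything else reuses the compactness machinery already developed for the fixed-$m_0$ case.
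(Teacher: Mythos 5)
Your proposal is correct and follows essentially the same route as the paper's proof: nonemptiness from Theorem~\ref{thm-exist-equilibrium}, closed graph by rerunning the $V_\varepsilon$-argument of Lemma~\ref{lemm-F} with the Portmanteau theorem transferring the mass bound from $m_0$ to the nearby $m_{0,n}$, compactness from closedness together with $\Eq(m_0)\subset\mathfrak Q$, and upper semicontinuity by contradiction via tightness (Proposition~\ref{prop-equilibrium-psi}) and Prokhorov's theorem. The uniformity point you single out --- that $\psi$ in Propositions~\ref{prop-T-psi} and~\ref{prop-equilibrium-psi} is independent of the initial datum --- is indeed the crux, and is exactly how the paper handles it.
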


\begin{proof}
The fact that $\Eq(m_0)$ is nonempty for every $m_0 \in \mathcal P(X)$ is an immediate consequence of Theorem~\ref{thm-exist-equilibrium}.

We next show that $\Eq$ has closed graph. To do that, let $(m_{0, n})_{n \in \mathbb N}$ be a sequence in $\mathcal P(X)$ converging to some $m_0 \in \mathcal P(X)$ and $(Q_n)_{n \in \mathbb N}$ be a sequence in $\mathcal P(\mathcal C_X)$ converging to some $Q \in \mathcal P(\mathcal C_X)$ and with $Q_n \in \Eq(m_{0, n})$ for every $n \in \mathbb N$. Hence ${e_0}_\# Q_n = m_{0, n}$ and $Q_n(\OOpt(Q_n)) = 1$ for every $n \in \mathbb N$.

Since ${e_0}_\# Q_n = m_{0, n}$ for every $n \in \mathbb N$, it follows from Lemma~\ref{lemm-etQ-continuous} that ${e_0}_\# Q = m_0$. To prove that $Q (\OOpt(Q)) = 1$, let $\varepsilon \in (0, 1)$ and define the set $V_\varepsilon$ by $V_{\varepsilon} = \{\gamma \in \mathcal C_X \suchthat \dist_{\mathcal C_X}(\gamma, \OOpt(Q)) \leq \varepsilon\}$. Let $R_0 > 0$ be such that $m_0\bigl(B_X(\mathbf 0, R_0)\bigr) \geq 1 - \frac{\varepsilon}{2}$ and note that, by the Portmanteau theorem, we have $\liminf_{n \to +\infty} m_{0, n}\bigl(B_X(\mathbf 0, R_0)\bigr) \geq m_0\bigl(B_X(\mathbf 0, R_0)\bigr)$, hence there exists $N_\varepsilon \in \mathbb N$ such that, for all $n \in \mathbb N$ with $n \geq N_\varepsilon$, we have $m_{0, n}\bigl(B_X(\mathbf 0, R_0)\bigr) \geq 1 - \varepsilon$.

Proceeding exactly as in the proof of Lemma~\ref{lemm-F}, we deduce that, up to increasing $N_{\varepsilon}$, we have
\begin{equation*}
\Lip_{K_{\max}}\bigl(\overline B_X(\mathbf 0, \psi(R_0))\bigr) \cap \OOpt(Q_n) \subset V_\varepsilon
\end{equation*}
for all $n \in \mathbb N$ with $n \geq N_\varepsilon$, where $\psi$ is the function from Proposition~\ref{prop-equilibrium-psi}. Since $Q_n \in \Eq(m_{0, n})$, we obtain, by combining the above inclusion with Proposition~\ref{prop-equilibrium-psi} and the fact that $Q_n(\OOpt(Q_n)) = 1$, that, for every $n \in \mathbb N$ with $n \geq N_\varepsilon$, we have
\begin{align*}
Q_n(V_{\varepsilon}) & \geq Q_n\Bigl(\Lip_{K_{\max}}\bigl(\overline B_X(\mathbf 0, \psi(R_0))\bigr) \cap \OOpt(Q_n)\Bigr) \\
& \geq m_{0, n}\bigl(\overline B_X(\mathbf 0, R_0)\bigr) \geq m_{0, n}\bigl(B_X(\mathbf 0, R_0)\bigr) \geq 1 - \varepsilon.
\end{align*}
Proceeding once again as in the proof of Lemma~\ref{lemm-F}, we deduce that $Q(V_{\varepsilon}) \geq 1 - \varepsilon$ and thus $Q(\OOpt(Q)) = \lim_{\varepsilon \to 0} Q(V_{\varepsilon}) = 1$, completing the proof that $Q \in \Eq(m_0)$. Hence, $\Eq$ has closed graph.

Since $\Eq$ has closed graph, we deduce that $\Eq(m_0)$ is closed for every $m_0 \in \mathcal P(X)$. On the other hand, by Proposition~\ref{prop-equilibrium-psi}, $\Eq(m_0)$ is a subset of the set $\mathfrak Q$ defined in \eqref{eq-defi-Q}, which is compact thanks to Proposition~\ref{PropQNonemptyConvexCompact}, implying thus that $\Eq(m_0)$ is compact.

To conclude the proof, assume, to obtain a contradiction, that $\Eq$ is not upper semicontinuous. Hence there exist $m_0 \in \mathcal P(X)$, a neighborhood $U$ of $\Eq(m_0)$ in $\mathcal P(\mathcal C_X)$, a sequence $(m_{0, n})_{n \in \mathbb N}$ converging to $m_0$, and a sequence $(Q_n)_{n \in \mathbb N}$ in $\mathcal P(\mathcal C_X)$ such that $Q_n \in \Eq(m_{0, n}) \setminus U$ for every $n \in \mathbb N$.

We claim that the sequence $(Q_n)_{n \in \mathbb N}$ is tight. Indeed, let $\varepsilon \in (0, 1)$ and take $R_0 > 0$ such that $m_0\bigl(B_X(\mathbf 0, R_0)\bigr) \geq 1 - \frac{\varepsilon}{2}$. By the Portmanteau theorem, we have
\[\liminf_{n \to +\infty} m_{0, n}\bigl(B_X(\mathbf 0, R_0)\bigr) \geq m_0\bigl(B_X(\mathbf 0, R_0)\bigr) \geq 1 - \frac{\varepsilon}{2},\]
hence there exists $N_\varepsilon \in \mathbb N$ such that, for all $n \in \mathbb N$ with $n \geq N_\varepsilon$, we have $m_{0, n}\bigl(B_X(\mathbf 0, R_0)\bigr) \geq 1 - \varepsilon$. Since $Q_n \in \Eq(m_{0, n})$, we deduce from Proposition~\ref{prop-equilibrium-psi} that
\[Q_n\Bigl(\Lip_{K_{\max}}\bigl(\overline B_X(\mathbf 0, \psi(R_0))\bigr)\Bigr) \geq 1 - \varepsilon\]
for every $n \in \mathbb N$ with $n \geq N_\varepsilon$, and thus the tightness of $(Q_n)_{n \in \mathbb N}$ follows as a consequence of the compactness of $\Lip_{K_{\max}}\bigl(\overline B_X(\mathbf 0, \psi(R_0))\bigr)$. Hence, by Prokhorov's theorem (see, e.g., \cite[Theorem~5.1.3]{Ambrosio2005Gradient}), up to extracting a subsequence, which we still denote by $(Q_n)_{n \in \mathbb N}$ for simplicity, there exists $Q \in \mathcal P(\mathcal C_X)$ such that $Q_n \to Q$ as $n \to +\infty$. Since $\Eq$ has closed graph, we conclude that $Q \in \Eq(m_0)$. This, however, contradicts the fact that $Q_n \notin U$ for every $n \in \mathbb N$, yielding the desired conclusion.
\end{proof}

Note that the set-valued map $\Lim$ can be seen as the composition of the set-valued map $\Eq\colon \mathcal P(X) \rightrightarrows \mathcal P(\mathcal C_X)$ with the operation of pushforward by $e_\infty$. The main difficulty in proving a result similar to Theorem~\ref{thm:eq-usc} for the set-valued map $\Lim$ is that $e_\infty$ is not defined on the whole space $\mathcal C_X$ and, even in the subset where it is defined, it is not continuous. We will prove, however, that the operation of pushforward by $e_\infty$ is continuous on the range of $\Eq$. For that purpose, let us first introduce the set
\begin{equation}
\label{eq:def-eeq}
\EEq = \bigcup_{m_0 \in \mathcal P(X)} \Eq(m_0),
\end{equation}
which is the range of $\Eq$. Our first preliminary result, obtained as a simple consequence of Theorem~\ref{thm:eq-usc}, is that this set is closed.

\begin{lemma}
Consider the mean field game $\MFG(X, \Gamma, K, g, m_0)$ and assume that \ref{Hypo-X-SigmaCompact}--\ref{Hypo-X-dist} and \ref{HypoMFG-K-Bound}--\ref{HypoMFG-g-compatible} are satisfied. Then the set $\EEq$ defined in \eqref{eq:def-eeq} is closed.
\end{lemma}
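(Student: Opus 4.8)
The plan is to obtain the statement as a direct consequence of the closed-graph property of $\Eq$ established in Theorem~\ref{thm:eq-usc}, combined with the continuity of the time-$0$ evaluation on measures from Lemma~\ref{lemm-etQ-continuous}. Since $(X, \dist)$ is complete and separable by \ref{Hypo-X-SigmaCompact}, the space $\mathcal C_X$ is Polish, hence $\mathcal P(\mathcal C_X)$ is metrizable, so it suffices to show that $\EEq$ is sequentially closed.

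First I would take a sequence $(Q_n)_{n \in \mathbb N}$ in $\EEq$ converging in $\mathcal P(\mathcal C_X)$ to some $Q$. By the definition \eqref{eq:def-eeq} of $\EEq$, for each $n \in \mathbb N$ there exists $m_{0, n} \in \mathcal P(X)$ with $Q_n \in \Eq(m_{0, n})$; moreover, by the equivalent description of $\Eq$ recorded right after \eqref{eq:def-eq} (or directly by Definition~\ref{DefiEquilibriumMFG}), one has $m_{0, n} = {e_0}_\# Q_n$. The key observation is thus that the initial measures are recovered from the $Q_n$ themselves, so their convergence is automatic: by Lemma~\ref{lemm-etQ-continuous}, the map $Q \mapsto {e_0}_\# Q$ is continuous, whence $m_{0, n} = {e_0}_\# Q_n \to {e_0}_\# Q =: m_0$ in $\mathcal P(X)$.

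It then remains to apply the closed graph of $\Eq$: the sequence $(m_{0, n}, Q_n)_{n \in \mathbb N}$ lies in the graph of $\Eq$ and converges to $(m_0, Q)$, so Theorem~\ref{thm:eq-usc} yields $Q \in \Eq(m_0) \subset \EEq$, proving that $\EEq$ is closed. No step here is genuinely difficult — the result is an immediate corollary of Theorem~\ref{thm:eq-usc} — the only point requiring care being the remark that the data $m_{0, n}$ are not prescribed in advance but are determined by ${e_0}_\# Q_n$, which is exactly what makes the joint-convergence hypothesis of the closed-graph property available for free.
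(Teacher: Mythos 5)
Your proof is correct and follows exactly the paper's own argument: recover $m_{0,n} = {e_0}_\# Q_n$, use the continuity of $Q \mapsto {e_0}_\# Q$ from Lemma~\ref{lemm-etQ-continuous}, and conclude via the closed-graph property of $\Eq$ from Theorem~\ref{thm:eq-usc}. The additional remark on metrizability justifying the sequential argument is a harmless (and valid) refinement.
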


\begin{proof}
Let $(Q_n)_{n \in \mathbb N}$ be a sequence in $\EEq$ converging to some $Q$ in $\mathcal P(\mathcal C_X)$. Clearly, $Q_n \in \Eq({e_0}_\# Q_n)$ and, by Lemma~\ref{lemm-etQ-continuous}, we have ${e_0}_\# Q_n \to {e_0}_\# Q$ as $n \to +\infty$. Hence, by Theorem~\ref{thm:eq-usc}, we deduce that $Q \in \Eq({e_0}_\# Q)$, showing that $Q \in \EEq$, as required.
\end{proof}

We shall also need the following preliminary result.

\begin{lemma}
\label{lemm:Q-constant-large}
Consider the mean field game $\MFG(X, \Gamma, K, g, m_0)$, assume that \ref{Hypo-X-SigmaCompact}--\ref{Hypo-X-dist} and \ref{HypoMFG-K-Bound}--\ref{HypoMFG-g-compatible} are satisfied, and let $\EEq$ be the set defined in \eqref{eq:def-eeq}. Let $Q \in \EEq$ and denote $m_0 = {e_0}_\# Q$. Then, for every $R > 0$, we have
\[Q\bigl(\{\gamma \in \mathcal C_X \suchthat \gamma \text{ is constant on } [T(R), +\infty)\}\bigr) \geq m_0\bigl(\overline B_X(\mathbf 0, R)\bigr),\]
where $T$ is the function from Proposition~\ref{prop-T-psi}.
\end{lemma}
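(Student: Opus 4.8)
The plan is to reduce the statement to Proposition~\ref{prop-T-psi} applied to the optimal control problem $\OCP(X, \Gamma, k_Q, g)$ naturally associated with the equilibrium $Q$, where $k_Q(t, x) = K({e_t}_{\#} Q, x)$. First, since $Q \in \EEq$, it follows from the definitions of $\EEq$, $\Eq$, and $m_0$ that ${e_0}_{\#} Q = m_0$ and $Q(\OOpt(Q)) = 1$. I would introduce the set $A_R = {e_0}^{-1}\bigl(\overline B_X(\mathbf 0, R)\bigr) = \{\gamma \in \mathcal C_X \suchthat \gamma(0) \in \overline B_X(\mathbf 0, R)\}$, so that $Q(A_R) = {e_0}_{\#} Q\bigl(\overline B_X(\mathbf 0, R)\bigr) = m_0\bigl(\overline B_X(\mathbf 0, R)\bigr)$; since $Q$ is concentrated on $\OOpt(Q)$, one also has $Q(A_R \cap \OOpt(Q)) = Q(A_R) = m_0\bigl(\overline B_X(\mathbf 0, R)\bigr)$.

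The core step is to establish the inclusion $A_R \cap \OOpt(Q) \subseteq \{\gamma \in \mathcal C_X \suchthat \gamma \text{ is constant on } [T(R), +\infty)\}$. Let $\gamma \in A_R \cap \OOpt(Q)$, so that $\gamma \in \Opt(Q, g, 0, \gamma(0))$ and $\gamma(0) \in \overline B_X(\mathbf 0, R)$. By Corollary~\ref{coro-k-from-K}, $k_Q$ satisfies \ref{HypoOCP-k-Bound} and \ref{HypoOCP-k-Lip} with the same constants $K_{\min}$, $K_{\max}$, $L_R$ as $K$, so Proposition~\ref{prop-T-psi} applies to $\OCP(X, \Gamma, k_Q, g)$ and provides a nondecreasing map $T$ depending only on $\mathbf 0$, $\Gamma$, $g$, $D$, $K_{\min}$, $K_{\max}$ --- hence independent of $Q$ and of $m_0$ --- with $\varphi_Q(0, \gamma(0)) \leq T(R)$. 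Since $\gamma$ is optimal, $\tau(0, \gamma) + G(0, \gamma) = \varphi_Q(0, \gamma(0))$, and because $G(0, \gamma) = g(\gamma(\tau(0, \gamma))) \geq 0$ this yields $\tau(0, \gamma) \leq T(R)$. By Definition~\ref{DefiOCP}\ref{DefiOCPOptimalTraj}, an optimal curve for $(k_Q, g, 0, \gamma(0))$ is constant on $[\tau(0, \gamma), +\infty)$, and since $[T(R), +\infty) \subseteq [\tau(0, \gamma), +\infty)$, we conclude that $\gamma$ is constant on $[T(R), +\infty)$.

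Combining the two steps, and noting that $\{\gamma \in \mathcal C_X \suchthat \gamma \text{ is constant on } [T(R), +\infty)\}$ is a closed (hence Borel) subset of $\mathcal C_X$, monotonicity of $Q$ gives
\[
Q\bigl(\{\gamma \in \mathcal C_X \suchthat \gamma \text{ is constant on } [T(R), +\infty)\}\bigr) \geq Q\bigl(A_R \cap \OOpt(Q)\bigr) = m_0\bigl(\overline B_X(\mathbf 0, R)\bigr),
\]
which is exactly the asserted inequality. I do not expect any genuine obstacle here; the only subtlety worth stating explicitly is that the function $T$ in the conclusion is the same for every $Q \in \EEq$, which is precisely what the dependence list in Proposition~\ref{prop-T-psi} together with Corollary~\ref{coro-k-from-K} guarantees.
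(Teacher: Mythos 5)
Your proof is correct and follows essentially the same route as the paper: both reduce to the inclusion $\{\gamma \in \OOpt(Q) \suchthat \gamma(0) \in \overline B_X(\mathbf 0, R)\} \subset \{\gamma \suchthat \gamma \text{ is constant on } [T(R), +\infty)\}$ via Proposition~\ref{prop-T-psi} and Definition~\ref{DefiOCP}, then use $Q(\OOpt(Q)) = 1$ and ${e_0}_\# Q = m_0$. Your explicit remark that $T$ is independent of $Q$ (via Corollary~\ref{coro-k-from-K}) matches what the paper records separately in Remark~\ref{remk:T}.
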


\begin{remark}
\label{remk:T}
The function $T$ from Proposition~\ref{prop-T-psi} is associated with an optimal control problem $\OCP(X, \Gamma, k, g)$ satisfying \ref{Hypo-X-SigmaCompact}--\ref{HypoOCP-k-Bound} and \ref{HypoOCP-g-compatible} and it depends only on $\mathbf 0$, $\Gamma$, $g$, $D$, $K_{\min}$, and $K_{\max}$. In the context of Lemma~\ref{lemm:Q-constant-large}, instead of an optimal control problem, we consider a mean field game $\MFG(X, \Gamma, K, g, m_0)$. The function $T$ in its statement should be understood as the function $T$ associated with the optimal control problem $\OCP(X, \Gamma, k_Q, g)$, where $k_Q$ is defined from $K$ and $Q$ as in Corollary~\ref{coro-k-from-K}, and it follows from the latter result that $T$ depends only on $\mathbf 0$, $\Gamma$, $g$, $D$, $K_{\min}$, and $K_{\max}$ from \ref{Hypo-X-SigmaCompact}--\ref{Hypo-X-dist} and \ref{HypoMFG-K-Bound}--\ref{HypoMFG-g-compatible}.
\end{remark}

\begin{proof}[Proof of Lemma~\ref{lemm:Q-constant-large}]
Let $R > 0$ and take $\gamma \in \OOpt(Q)$ such that $\gamma(0) \in \overline B_X(\mathbf 0, R)$. It follows from Definition~\ref{DefiOCP} and Proposition~\ref{prop-T-psi} that $\tau(0, \gamma) \leq T(R)$ and that $\gamma$ is constant on $[\tau(0, \gamma), +\infty)$. Hence, for every $R > 0$, we have
\[
\{\gamma \in \mathcal C_X \suchthat \gamma \in \OOpt(Q) \text{ and } \gamma(0) \in \overline B_X(\mathbf 0, R)\} \subset \{\gamma \in \mathcal C_X \suchthat \gamma \text{ is constant on } [T(R), +\infty)\},
\]
showing that
\begin{multline*}
Q\bigl(\{\gamma \in \mathcal C_X \suchthat \gamma \text{ is constant on } [T(R), +\infty)\}\bigr) \\ \geq Q\bigl(\{\gamma \in \mathcal C_X \suchthat \gamma \in \OOpt(Q) \text{ and } \gamma(0) \in \overline B_X(\mathbf 0, R)\}\bigr).
\end{multline*}
Since $Q \in \EEq$, we have $Q(\OOpt(Q)) = 1$, thus
\begin{align*}
Q\bigl(\{\gamma \in \mathcal C_X \suchthat \gamma \in \OOpt(Q) \text{ and } \gamma(0) \in \overline B_X(\mathbf 0, R)\}\bigr) & = Q\bigl(\{\gamma \in \mathcal C_X \suchthat \gamma(0) \in \overline B_X(\mathbf 0, R)\}\bigr) \\
& = m_0\bigl(\overline B_X(\mathbf 0, R)\bigr),
\end{align*}
yielding the conclusion.
\end{proof}

We are now in position to show that the operation of pushforward by $e_\infty$ is continuous in the range of $\Eq$.

\begin{lemma}
\label{lemm:einftysharp-is-continuous}
Consider the mean field game $\MFG(X, \Gamma, K, g, m_0)$, assume that \ref{Hypo-X-SigmaCompact}--\ref{Hypo-X-dist} and \ref{HypoMFG-K-Bound}--\ref{HypoMFG-g-compatible} are satisfied, and let $\EEq$ be the set defined in \eqref{eq:def-eeq}. Then the map $\EEq \ni Q \mapsto {e_\infty}_\# Q \in \mathcal P(X)$ is continuous.
\end{lemma}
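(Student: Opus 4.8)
The plan is to test the weak convergence ${e_\infty}_\# Q_n \to {e_\infty}_\# Q$ against continuous bounded functions, and to get around the fact that $e_\infty$ is \emph{not} continuous on $\mathcal C_X$ by observing that, on the closed set of trajectories that are constant on $[S, +\infty)$ for a fixed $S$, one has $e_\infty = e_S$, and $e_S \colon \mathcal C_X \to X$ \emph{is} continuous. So let $(Q_n)_{n \in \mathbb N}$ be a sequence in $\EEq$ converging to some $Q \in \mathcal P(\mathcal C_X)$; since $\EEq$ is closed, $Q \in \EEq$, and since equilibria are concentrated on $\OOpt(\cdot) \subset \mathcal C_{\lim}(X)$, both ${e_\infty}_\# Q_n$ and ${e_\infty}_\# Q$ are well-defined elements of $\mathcal P(X)$. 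Fixing a continuous bounded $f \colon X \to \mathbb R$, it suffices to show that $\int_{\mathcal C_X} f(e_\infty(\gamma)) \diff Q_n(\gamma) \to \int_{\mathcal C_X} f(e_\infty(\gamma)) \diff Q(\gamma)$.

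First I would quantify, uniformly in $n$, the mass carried by ``eventually constant'' trajectories. Given $\varepsilon \in (0, 1)$, pick $R_0 > 0$ with $m_0(B_X(\mathbf 0, R_0)) \geq 1 - \varepsilon/2$, where $m_0 = {e_0}_\# Q$. Since ${e_0}_\# Q_n \to m_0$ by Lemma~\ref{lemm-etQ-continuous}, the Portmanteau theorem gives $\liminf_n ({e_0}_\# Q_n)(B_X(\mathbf 0, R_0)) \geq 1 - \varepsilon/2$, hence $({e_0}_\# Q_n)(\overline B_X(\mathbf 0, R_0)) \geq 1 - \varepsilon$ for $n$ large. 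Set $S = T(R_0)$, which, by Remark~\ref{remk:T}, is the \emph{same} function $T$ from Proposition~\ref{prop-T-psi} for all the mean field games in play, so that a single threshold $S$ works for the whole sequence. Let $A = \{\gamma \in \mathcal C_X \suchthat \gamma(t) = \gamma(S) \text{ for all } t \geq S\}$, a closed subset of $\mathcal C_X$ contained in $\mathcal C_{\lim}(X)$, on which $e_\infty = e_S$. Lemma~\ref{lemm:Q-constant-large} then gives $Q_n(A) \geq ({e_0}_\# Q_n)(\overline B_X(\mathbf 0, R_0)) \geq 1 - \varepsilon$ for $n$ large, and likewise $Q(A) \geq 1 - \varepsilon$.

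Then I would run a routine three-$\varepsilon$ estimate: split each integral over $A$ and over $\mathcal C_X \setminus A$; on the complement bound by $\varepsilon$ times $\sup_X \abs{f}$; on $A$ replace $e_\infty$ by $e_S$, then extend the integration back to all of $\mathcal C_X$ at the cost of another $\varepsilon \sup_X \abs{f}$ term; and finally use $Q_n \to Q$ against the continuous bounded function $\gamma \mapsto f(e_S(\gamma))$ to send the remaining term to $0$. This yields $\limsup_n \abs{\int_{\mathcal C_X} f(e_\infty) \diff Q_n - \int_{\mathcal C_X} f(e_\infty) \diff Q} \leq C \varepsilon \sup_X \abs{f}$ for an absolute constant $C$, and letting $\varepsilon \to 0$ concludes the proof.

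The only genuine obstacle is precisely the discontinuity of $e_\infty$; everything else is bookkeeping. What makes the argument go through is the combination of the uniform-in-$n$ lower bound of Lemma~\ref{lemm:Q-constant-large} on the mass of eventually-constant trajectories with the observation (Remark~\ref{remk:T}) that the threshold time does not depend on the particular equilibrium, allowing a single evaluation map $e_S$ to control all the $Q_n$ at once.
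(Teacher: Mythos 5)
Your proposal is correct and follows essentially the same route as the paper's proof: both rest on the uniform-in-$n$ mass bound from Lemma~\ref{lemm:Q-constant-large} on the set of trajectories constant after the single time $T(R_0)$, where $e_\infty$ coincides with the continuous evaluation $e_{T(R_0)}$. The only difference is cosmetic — you verify weak convergence against bounded continuous test functions via a three-$\varepsilon$ estimate, whereas the paper uses the closed-set Portmanteau criterion $\limsup_n {e_\infty}_\#Q_n(F) \leq {e_\infty}_\# Q(F)$ with the same decomposition.
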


\begin{proof}
Let $(Q_n)_{n \in \mathbb N}$ be a sequence in $\EEq$ converging to some $Q \in \EEq$. Define $m_0 = {e_0}_\# Q$ and, for $n \in \mathbb N$, set $m_{0, n} = {e_0}_\# Q_n$. Note that $m_{0, n} \to m_0$ in $\mathcal P(X)$ as $n \to +\infty$ thanks to Lemma~\ref{lemm-etQ-continuous}.

Take $\varepsilon > 0$ and let $R_\varepsilon > 0$ be such that $m_0\bigl(B_X(\mathbf 0, R_\varepsilon)\bigr) \geq 1 - \frac{\varepsilon}{4}$. Using the fact that $m_{0, n} \to m_0$ in $\mathcal P(X)$ as $n \to +\infty$ and the Portmanteau theorem, we deduce that there exists $N_\varepsilon \in \mathbb N$ such that, for every $n \in \mathbb N$ with $n \geq N_\varepsilon$, we have
\[
m_{0, n}\bigl(B_X(\mathbf 0, R_\varepsilon)\bigr) \geq 1 - \frac{\varepsilon}{2}.
\]
Let $T$ be the function defined in Proposition~\ref{prop-T-psi} (considered here in the sense of Remark~\ref{remk:T}) and set $T_\varepsilon = T(R_\varepsilon)$ and
\[C_\varepsilon = \{\gamma \in \mathcal C_X \suchthat \gamma \text{ is constant on } [T_\varepsilon, +\infty)\}.\]
Note that $C_\varepsilon$ is closed and, from Lemma~\ref{lemm:Q-constant-large}, we have
\begin{equation}
\label{eq:estim-Qn-constant}
Q_n(C_\varepsilon) \geq 1 - \frac{\varepsilon}{2} \text{ for every } n \in \mathbb N \text{ with } n \geq N_\varepsilon.
\end{equation}

Let $F \subset X$ be a closed set. For every $n \in \mathbb N$, we have
\begin{equation}
\label{eq:decompose-einftyQnF}
{e_\infty}_\# Q_n (F) = Q_n(e_\infty^{-1}(F)) = Q_n(e_\infty^{-1}(F) \cap C_\varepsilon) + Q_n(e_\infty^{-1}(F) \setminus C_\varepsilon).
\end{equation}
Note that, since any trajectory $\gamma \in C_\varepsilon$ is constant on $[T_\varepsilon, +\infty)$, we have $e_\infty^{-1}(F) \cap C_\varepsilon = e_{T_\varepsilon}^{-1}(F) \cap C_\varepsilon$. Since $e_{T_\varepsilon}$ is continuous by Lemma~\ref{lemm-etQ-continuous} and $F$ is closed, we have that $e_{T_\varepsilon}^{-1}(F)$ is closed, and thus so is $e_{T_\varepsilon}^{-1}(F) \cap C_\varepsilon$. Hence, by the Portmanteau theorem, up to increasing $N_\varepsilon$ (in a way that also depends on $F$), we have that $Q_n(e_\infty^{-1}(F) \cap C_\varepsilon) \leq Q(e_\infty^{-1}(F) \cap C_\varepsilon) + \frac{\varepsilon}{2}$. Combining this with \eqref{eq:estim-Qn-constant} and \eqref{eq:decompose-einftyQnF}, we deduce that, for $n \geq N_\varepsilon$, we have
\[
{e_\infty}_\# Q_n (F) \leq Q(e_\infty^{-1}(F) \cap C_\varepsilon) + \varepsilon \leq Q(e_\infty^{-1}(F)) + \varepsilon = {e_\infty}_\# Q(F) + \varepsilon,
\]
which shows that $\limsup_{n \to +\infty} {e_\infty}_\# Q_n (F) \leq {e_\infty}_\# Q(F)$. Since this holds for any closed set $F \subset X$, it follows from the Portmanteau theorem that ${e_\infty}_\# Q_n \to {e_\infty}_\# Q$ as $n \to +\infty$, as required.
\end{proof}

We can now deduce, as an immediate consequence of Theorem~\ref{thm:eq-usc} and Lemma~\ref{lemm:einftysharp-is-continuous}, that the counterpart of the former also holds for the set-valued $\Lim$. More precisely, we have the following result.

\begin{corollary}
Consider the mean field game $\MFG(X, \Gamma, K, g, m_0)$, assume that \ref{Hypo-X-SigmaCompact}--\ref{Hypo-X-dist} and \ref{HypoMFG-K-Bound}--\ref{HypoMFG-g-compatible} are satisfied, and let $\Lim\colon \mathcal P(X) \rightrightarrows \mathcal P(X)$ be the set-valued map defined in \eqref{eq:def-Lim}. Then $\Lim(m_0)$ is nonempty and compact for every $m_0 \in \mathcal P(X)$ and $\Lim$ is upper semicontinuous. In particular, $\Lim$ has closed graph.
\end{corollary}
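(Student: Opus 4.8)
The plan is to express $\Lim$ as the composition of the set-valued map $\Eq\colon \mathcal P(X) \rightrightarrows \mathcal P(\mathcal C_X)$ with the single-valued map $\Psi\colon \EEq \to \mathcal P(X)$ given by $\Psi(Q) = {e_\infty}_\# Q$, and then to invoke the elementary fact that composing an upper semicontinuous set-valued map having nonempty compact values with a continuous single-valued map again produces an upper semicontinuous set-valued map with nonempty compact values. All the needed ingredients are already in place: by Theorem~\ref{thm:eq-usc}, $\Eq$ is nonempty-valued, compact-valued, and upper semicontinuous; by the very definition \eqref{eq:def-eeq} of $\EEq$, one has $\Eq(m_0) \subset \EEq$ for every $m_0 \in \mathcal P(X)$; and by Lemma~\ref{lemm:einftysharp-is-continuous}, $\Psi$ is continuous on $\EEq$. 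Since $\Lim(m_0) = \Psi(\Eq(m_0))$ by \eqref{eq:def-Lim} and $\Eq(m_0)$ is a nonempty compact subset of $\EEq$ on which $\Psi$ is continuous, it follows at once that $\Lim(m_0)$ is nonempty and compact for every $m_0 \in \mathcal P(X)$.

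For upper semicontinuity I would argue directly. Fix $m_0 \in \mathcal P(X)$ and an open set $U \subset \mathcal P(X)$ with $\Lim(m_0) \subset U$. Then $\Psi^{-1}(U)$ is relatively open in $\EEq$ and contains $\Eq(m_0)$, so there is an open set $W \subset \mathcal P(\mathcal C_X)$ with $W \cap \EEq = \Psi^{-1}(U)$; since $\Eq(m_0) \subset \Psi^{-1}(U) \subset W$, the set $W$ is an open neighborhood of $\Eq(m_0)$. By upper semicontinuity of $\Eq$ there is a neighborhood $V$ of $m_0$ with $\Eq(m) \subset W$ for every $m \in V$; combining this with $\Eq(m) \subset \EEq$ yields $\Eq(m) \subset W \cap \EEq = \Psi^{-1}(U)$, whence $\Lim(m) = \Psi(\Eq(m)) \subset U$ for all $m \in V$. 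Thus $\Lim$ is upper semicontinuous. Alternatively, one may simply quote the corresponding composition statements from \cite{Aubin2009Set}. The closed-graph conclusion then follows from upper semicontinuity together with the compactness of the values of $\Lim$ and the metrizability of $\mathcal P(X)$, exactly as in the final line of the proof of Theorem~\ref{thm:eq-usc}, e.g.\ via \cite[Proposition~1.4.8]{Aubin2009Set}.

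Since every step here reduces to a result already established in the paper, I do not expect any serious obstacle. The only point deserving a little care is the reduction to the composition $\Psi \circ \Eq$: one must keep track of the fact that $\Psi$ is defined and continuous only on $\EEq$ rather than on all of $\mathcal P(\mathcal C_X)$, and hence that it is being applied only to measures in $\EEq$ — which is guaranteed because $\Eq$ takes all of its values in $\EEq$ — and, in the upper semicontinuity argument, that testing against ambient open neighborhoods $W \supset \Eq(m_0)$ is legitimate, which it is by the definition of upper semicontinuity. Should one prefer to avoid this minor bookkeeping, one may instead work throughout in the closed, hence Polish, subspace $\EEq$ of $\mathcal P(\mathcal C_X)$.
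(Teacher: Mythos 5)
Your proposal is correct and follows exactly the route the paper intends: the paper states this corollary as an immediate consequence of Theorem~\ref{thm:eq-usc} and Lemma~\ref{lemm:einftysharp-is-continuous}, i.e.\ precisely the composition $\Lim = \Psi \circ \Eq$ with $\Psi = {e_\infty}_\#$ continuous on $\EEq$, which you spell out carefully (including the bookkeeping about $\Psi$ being defined only on the closed set $\EEq$). No gaps.
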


\begin{remark}
In addition to upper semicontinuity, there are several other notions of continuity for set-valued maps, such as lower semicontinuity, Lipschitz continuity, or also continuity with respect to the Hausdorff distance (see, e.g., \cite{Aubin2009Set}). In general, however, no further continuity properties should be expected for the maps $\Eq$ and $\Lim$. Consider, for instance, a modification of the mean field game from Remark~\ref{RemkEquilibriumNonUnique} in which the initial distribution of agents $m_0$ is replaced by the measure $m_{0, a} = \delta_a$, where $a \in [0, 1]$ and $\delta_a$ denotes the Dirac delta measure at $a$. One can then compute that, for this mean field game,
\begin{align*}
\Eq(\delta_a) & = \begin{dcases*}
\{\delta_{\gamma_{\ell, a}}\} & if $0 \leq a < \frac{1}{2}$, \\
\{\alpha \delta_{\gamma_{\ell, a}} + (1 - \alpha) \delta_{\gamma_{r, a}} \suchthat \alpha \in [0, 1]\} & if $a = \frac{1}{2}$, \\
\{\delta_{r, a}\} & if $\frac{1}{2} < a \leq 1$,
\end{dcases*} \displaybreak[0] \\
\Lim(\delta_a) & = \begin{dcases*}
\{\delta_{0}\} & if $0 \leq a < \frac{1}{2}$, \\
\{\alpha \delta_{0} + (1 - \alpha) \delta_{1} \suchthat \alpha \in [0, 1]\} & if $a = \frac{1}{2}$, \\
\{\delta_{1}\} & if $\frac{1}{2} < a \leq 1$,
\end{dcases*}
\end{align*}
where $\gamma_{\ell, a}, \gamma_{r, a} \in \mathcal C_X$ are the trajectories defined for $t \in \mathbb R_+$ by $\gamma_{\ell, a}(t) = \max\left(a - t, 0\right)$ and $\gamma_{r, a}(t) = \min\left(a + t, 1\right)$. It is immediate to verify from the above expressions that $\Eq$ and $\Lim$ are not lower semicontinuous, Lipschitz continuous, nor continuous with respect to the Hausdorff distance at $\delta_{\frac{1}{2}}$.
\end{remark}

\section*{Acknowledgment}

The author wishes to thank the anonymous reviewers of this paper, whose remarks led to important improvements. In particular, Section~\ref{sec:dependence-m0} was written following a question by a reviewer on the stability of $m_\infty$ with respect to $m_0$.

\bibliographystyle{abbrv}
\bibliography{Bibliography}

\end{document}